\documentclass{interact}


\usepackage[caption=false]{subfig}

\usepackage[numbers,sort&compress]{natbib}
\bibpunct[, ]{[}{]}{,}{n}{,}{,}
\makeatletter
\def\NAT@def@citea{\def\@citea{\NAT@separator}}
\makeatother


\usepackage{amsmath, amsthm, amssymb}
\usepackage{thmtools}
\usepackage{mathtools}
\mathtoolsset{centercolon}
\usepackage{enumitem}
\usepackage{graphicx}
\usepackage[hidelinks]{hyperref}
\hypersetup{
  pdftitle={Convexity of the orbit-closed C-numerical range and majorization},
  pdfauthor={Jireh Loreaux, Sasmita Patnaik}
}
\usepackage{tikz}
\usetikzlibrary{cd}
\usepackage{cleveref}





\newcommand{\norm}[1]{\ensuremath{\lVert #1 \rVert}}
\newcommand{\abs}[1]{\ensuremath{\lvert #1 \rvert}}
\newcommand{\angles}[1]{\ensuremath{\langle #1 \rangle}}

\newcommand{\set}[1]{\{#1\}}
\newcommand{\vset}[1]{\left\set{#1\right}}
\newcommand{\vmid}{\,\middle\vert\,}


\expandafter\mathchardef\expandafter\varphi\number\expandafter\phi\expandafter\relax
\expandafter\mathchardef\expandafter\phi\number\varphi


\makeatletter
\newcommand\bigcdot{\mathpalette\bigcdot@{.5}}
\newcommand*\bigcdot@[2]{\mathbin{\vcenter{\hbox{\scalebox{#2}{$\m@th#1\bullet$}}}}}
\makeatother


\newcommand{\term}[1]{\emph{#1}}


\newcommand{\Hil}{\mathcal{H}}
\newcommand{\K}{\mathcal{K}}

\newcommand{\traceclass}{\mathcal{L}_1}
\newcommand{\orbit}{\mathcal{O}}
\newcommand{\ugroup}{\mathcal{U}}

\newcommand{\cz}{c_0}

\newcommand{\czstar}{\cz^{*}}
\newcommand{\eig}{\lambda}
\newcommand{\modeig}{\tilde{\eig}}
\newcommand{\zop}{\mathbf{0}}


\renewcommand{\emptyset}{\varnothing}
\renewcommand{\epsilon}{\varepsilon}

\newcommand{\nats}{\mathbb{N}}
\newcommand{\ints}{\mathbb{Z}}
\newcommand{\reals}{\mathbb{R}}
\newcommand{\complex}{\mathbb{C}}
\newcommand{\mat}{M}


\DeclareFontFamily{U}{mathb}{\hyphenchar\font45}
\DeclareFontShape{U}{mathb}{m}{n}{
<-6> mathb5 <6-7> mathb6 <7-8> mathb7
<8-9> mathb8 <9-10> mathb9
<10-12> mathb10 <12-> mathb12
}{}
\DeclareSymbolFont{mathb}{U}{mathb}{m}{n}
\DeclareMathSymbol{\pprec}{\mathrel}{mathb}{"CE}
\DeclareMathSymbol{\ssucc}{\mathrel}{mathb}{"CF}


\DeclareMathOperator{\spec}{\sigma}
\newcommand{\essspec}{\spec_{\mathrm{ess}}}
\newcommand{\ptspec}{\spec_{\mathrm{pt}}}
\DeclareMathOperator{\diag}{diag}
\DeclareMathOperator{\rank}{rank}
\DeclareMathOperator{\trace}{Tr}
\DeclareMathOperator{\spans}{span}
\DeclareMathOperator{\conv}{conv}

\DeclareMathOperator{\dist}{dist}
\newcommand{\closure}[2][]{\overline{#2}^{#1}}
\newcommand{\cspec}[1][C]{P_{#1}}
\newcommand{\ocspec}[1][C]{\spec_{\scalebox{0.85}[1]{$\scriptscriptstyle\orbit(#1)$}}}
\newcommand{\nr}{W}
\newcommand{\essnr}{\nr_{\textrm{ess}}}
\newcommand{\knr}[1][k]{\nr_{#1}}
\newcommand{\cnr}[1][C]{\nr_{#1}}
\newcommand{\ocnr}[1][C]{\nr_{\orbit(#1)}}
\newcommand{\maj}{\prec}
\newcommand{\submaj}{\pprec}


\setlist[enumerate]{label=\textup{(\roman*)}}


\numberwithin{equation}{section}


\theoremstyle{plain} 
\newtheorem{theorem}{Theorem}[section]
\newtheorem{corollary}[theorem]{Corollary}
\newtheorem{lemma}[theorem]{Lemma}
\newtheorem{proposition}[theorem]{Proposition}

\theoremstyle{definition}
\newtheorem{definition}[theorem]{Definition}

\theoremstyle{remark}
\newtheorem{remark}[theorem]{Remark}
\newtheorem{example}[theorem]{Example}

\newcounter{case}
\makeatletter\@addtoreset{case}{theorem}\makeatother
\newenvironment{case}[1]{\stepcounter{case}\par{\itshape Case \thecase. #1\par\noindent}}{\par}


\title{Convexity of the orbit-closed \\ $C$-numerical range and majorization}

\author{\name{Jireh Loreaux\textsuperscript{a}\thanks{J. Loreaux email: jloreau@siue.edu} and Sasmita Patnaik\textsuperscript{b}\thanks{S. Patnaik email: sasmita@iitk.ac.in}}\affil{\textsuperscript{a}Southern Illinois University Edwardsville, 1 Hairpin Dr, Edwardsville, IL, 62026, USA; \textsuperscript{b}Indian Institute of Technology, Kanpur, Kalyanpur, Kanpur-208016, India.}}


\begin{document}

\maketitle

\begin{abstract}
  We introduce and investigate the orbit-closed $C$-numerical range, a natural modification of the $C$-numerical range of an operator introduced for $C$ trace-class by Dirr and vom Ende.
  Our orbit-closed $C$-numerical range is a conservative modification of theirs because these two sets have the same closure and even coincide when $C$ is finite rank.
  Since Dirr and vom Ende's results concerning the $C$-numerical range depend only on its closure, our orbit-closed $C$-numerical range inherits these properties, but we also establish more.

  For $C$ selfadjoint, Dirr and vom Ende were only able to prove that the closure of their $C$-numerical range is convex, and asked whether it is convex without taking the closure.
  We establish the convexity of the orbit-closed $C$-numerical range for selfadjoint $C$ without taking the closure by providing a characterization in terms of majorization, unlocking the door to a plethora of results which generalize properties of the $C$-numerical range known in finite dimensions or when $C$ has finite rank.
  Under rather special hypotheses on the operators, we also show the $C$-numerical range is convex, thereby providing a partial answer to the question posed by Dirr and vom Ende.
\end{abstract}

\begin{keywords}
  numerical range, $C$-numerical range, convex, trace-class, Toeplitz--Hausdorff Theorem, unitary orbit, Hausdorff distance, essential spectrum
\end{keywords}
\begin{amscode}
  Primary 47A12, 47B15; Secondary 52A10, 52A40, 26D15.
\end{amscode}

\section{Introduction}

Herein we let $\Hil$ denote a separable complex Hilbert space and $B(\Hil)$ the collection of all bounded linear operators on $\Hil$.
For $A \in B(\Hil)$, the \term{numerical range} $\nr(A)$ is the image of the unit sphere of $\Hil$ under the continuous quadratic form $x \mapsto \angles{Ax,x}$, where $\angles{\bigcdot,\bigcdot}$ denotes the inner product on $\Hil$.
Of course, the numerical range has a long history but perhaps the most impactful result is the Toeplitz--Hausdorff Theorem which asserts that the numerical range is convex \
\cite{Toe-1918-MZ,Hau-1919-MZ}; an intuitive proof is given by Davis in \cite{Dav-1971-CMB}.
In this paper we are interested in unitarily invariant generalizations of the numerical range and their associated properties, especially convexity and its relation to majorization.

By considering an alternative definition of the numerical range, some generalizations become readily apparent.
Notice that
\begin{equation*}
  \nr(A) = \set{\angles{Ax,x} \mid x \in \Hil, \norm{x}=1} = \set{ \trace(PA) \mid P\ \text{is a rank-1 projection}}.
\end{equation*}
As Halmos recognized in \cite{Hal-1964-ASM}, one could generalize this by fixing $k \in \nats$ and requiring $P$ to be a rank-$k$ projection.
In this way, we arrive at the \term{$k$-numerical range}
\begin{equation*}
  \knr(A) := \vset{ \trace\Big(\frac{1}{k}PA\Big) \vmid P\ \text{is a rank-$k$ projection}}.
\end{equation*}
The normalization constant $\frac{1}{k}$ is actually quite natural;
among other things, it ensures $\knr(A)$ is bounded independent of $k$ by $\norm{A}$.
In \cite[\textsection{}12]{Ber-1963}, Berger proved a few fundamental facts about the $k$-numerical range including its convexity, as well as the inclusion property $\knr[k+1](A) \subseteq \knr(A)$.
We will see shortly that these convexity and inclusion properties are actually consequences of more general phenomena (see \Cref{cor:c-numerical-range-convex,cor:majorization-c-numerical-range-inclusion}).

In \cite{FW-1971-GMJ}, Fillmore and Williams examined $\knr(A)$, but restricted their attention to the finite dimensional setting.
There they established
\begin{equation}
  \label{eq:knr-majorization-description}
  \knr(A) = \vset{ \frac{1}{k} \trace(XA) \vmid 0 \le X \le I, \trace X = k },
\end{equation}
which was generalized by Goldberg and Straus to the $C$-numerical range, as we describe below.
Moreover, Fillmore and Williams showed that if $A \in \mat_n(\complex)$ is normal, then
\begin{equation}
  \label{eq:knr-extreme-points}
  \knr(A) = \conv \vset{ \frac{1}{k} \sum_{i=1}^k \lambda_i \vmid  \parbox{32ex}{$\lambda_i$ is an eigenvalue of $A$, repeated at most according to multiplicity} \ },
\end{equation}
which says that the extreme points of $\knr(A)$ are contained in the set of averages of $k$-eigenvalues of $A$ (allowing repetitions according to multiplicity).
This is a clear analogue of the standard fact for numerical ranges that $\nr(A) = \conv \spec(A)$ when $A \in \mat_n(\complex)$ is normal.

In order to further generalize the $k$-numerical range, yet another new perspective is necessary.
The unitary group $\ugroup$ of $B(\Hil)$ acts by conjugation on $B(\Hil)$, and the orbit $\ugroup(C)$ of an operator $C \in B(\Hil)$ under this action is called the \term{unitary orbit}.
When $P$ is any rank-$k$ projection ($k < \infty$), $\ugroup(P)$ consists of all rank-$k$ projections in $B(\Hil)$.
Therefore, if $P$ is a rank-$k$ projection, then
\begin{equation*}
  \knr(A) = \vset{ \trace(XA) \vmid X \in \ugroup\left(\frac{1}{k}P\right) }.
\end{equation*}

The above representation of the $k$-numerical range suggests the natural generalization to the \term{$C$-numerical range},
\begin{equation*}
  \cnr(A) := \set{ \trace(XA) \mid X \in \ugroup(C) }.
\end{equation*}
Of course, this requires $\trace(XA)$ to make sense, which can be achieved in several different ways, each investigated by various authors.
In \cite{Wes-1975-LMA}, Westwick considered $\cnr(A)$ when $C$ is a finite rank selfadjoint operator and proved that $\cnr(A)$ is convex by means of Morse theory.
When $\dim \Hil = n < \infty$ so that $B(\Hil) \cong \mat_n(\complex)$, $\cnr(A)$ is well-defined for an arbitrary $C \in \mat_n(\complex)$.
The $C$-numerical range was first studied in this generality by Goldberg and Straus in \cite{GS-1977-LAA}.
There, they proved a generalization of \eqref{eq:knr-majorization-description} for an arbitrary selfadjoint matrix $C$, which we extend to the infinite dimensional setting in \Cref{thm:c-numerical-range-via-majorization}.
Chi-Kwong Li provides in \cite{Li-1994-LMA} a comprehensive survey of the properties of the $C$-numerical range in finite dimensions, including the highlights which we now describe.
When $C$ is selfadjoint the $C$-numerical range is convex, but this may fail even if $C$ is normal \cite{Wes-1975-LMA,AT-1983-LMA}.
However, the $C$-numerical range is always star-shaped relative to the star center $\trace(C) \big(\frac{1}{n} \trace(A)\big)$ \cite{CT-1996-LMA}.
Moreover, there is a set $\cspec(A)$ associated to the pair $C,A$ called the \term{$C$-spectrum of $A$} which, when $C$ is a rank-1 projection, coincides with the usual spectrum of $A$;
Then when $A$ is normal and $C$ is selfadjoint, $\cnr(A) = \conv \cspec(A)$ \cite[Theorem~4]{Mar-1979-ANYAS}, which generalizes \eqref{eq:knr-extreme-points}.

In the recent paper \cite{DvE-2020-LaMA}, Dirr and vom Ende study a generalization of the $C$-numerical range to the infinite dimensional setting.
In this case, it again becomes necessary to ensure that the trace $\trace(XA)$ is well-defined, which they naturally enforce by requiring $C$ to be trace-class.
In \cite{DvE-2020-LaMA}, they prove extensions of some finite dimensional results by means of limiting arguments.
As a result of these limiting arguments, all of their major results pertain to the \emph{closure} $\closure{\cnr(A)}$ of the $C$-numerical range.
Dirr and vom Ende prove that $\closure{\cnr(A)}$ is star-shaped and that any element of $\trace(C) \essnr(A)$ is a star center \cite[Theorem~3.10]{DvE-2020-LaMA}.
They asked explicitly \cite[Open Problem (b)]{DvE-2020-LaMA} whether $\cnr(A)$ is convex without taking the closure, and we provide a partial answer in \Cref{cor:c-numerical-range-convex-diagonalizable}.
Moreover, they show that $\closure{\cnr(A)}$ is convex whenever $C$ is selfadjoint\footnote{or only slightly more generally, $C$ normal with collinear eigenvalues. In this paper, we have many results for selfadjoint $C$, but they generally have trivial unmentioned corollaries for $C$ normal with collinear eigenvalues by means of \Cref{prop:c-numerical-range-basics}\ref{item:similarity-preserving-ocnr}. We neglect these slightly more general statements in favor of the selfadjoint ones solely for clarity and simplicity of exposition.} or $A$ is a rotation and translation of a selfadjoint operator \cite[Theorem~3.8]{DvE-2020-LaMA}.
Additionally, they prove that if $C,A$ are both normal, $A$ is compact and the eigenvalues of either $C$ or $A$ are collinear, then $\closure{\cnr(A)} = \conv(\closure{\cspec(A)})$ \cite[Corollary~3.1]{DvE-2020-LaMA}.

In this paper we introduce and investigate a natural modification of the $C$-numerical range with $C$ trace-class which we call the \term{orbit-closed $C$-numerical range}, denoted $\ocnr(A)$ (see \Cref{def:orbit-closed-c-numerical-range}).
The only difference between $\ocnr(A)$ and $\cnr(A)$ is that the former allows $X$  which are \emph{approximately} unitarily equivalent (in trace norm) to $C$, that is,
\begin{equation*}
  \ocnr(A) := \set{ \trace(XA) \mid X \in \orbit(C) },
\end{equation*}
where $\orbit(C) := \closure[\norm{\bigcdot}_1]{\ugroup(C)}$.
Considering closures of unitary orbits in various operator topologies serves an important purpose and has precedent in the literature, especially in relation to majorization (see the discussion which introduces \cref{sec:orbit-closed-c-numerical-range}).

This relatively small difference between $\cnr(A)$ and $\ocnr(A)$ has significant consequences.
In particular, for $C$ selfadjoint we give a characterization of $\ocnr(A)$ in terms of majorization (\Cref{thm:c-numerical-range-via-majorization}) which is an appropriate extension to infinite dimensions of \cite[Theorem~1.2]{FW-1971-GMJ} (included herein as \eqref{eq:knr-majorization-description}) and its generalization \cite[Theorem~7]{GS-1977-LAA}, and whose proof is inspired by \cite[Theorem~2.14]{DS-2017-PEMSIS}.
Because in general $\cnr(A) \not= \ocnr(A)$, necessarily $\cnr(A)$ cannot enjoy this same characterization.
Moreover, this majorization characterization of $\ocnr(A)$ is the backbone of this paper and it provides a gateway to the rest of our major results.
One immediate corollary is the convexity of $\ocnr(A)$ when $C$ is selfadjoint (\Cref{cor:c-numerical-range-convex}) which generalizes and provides an independent and purely operator-theoretic proof of Westwick's theorem \cite{Wes-1975-LMA} for $C$ a finite rank selfadjoint operator.
Moreover, to our knowledge our \Cref{cor:c-numerical-range-convex} constitutes the \emph{only}\footnote{
  In the \emph{finite dimensional} setting there is an independent proof of Westwick's theorem due to Poon \cite{Poo-1980-LMA} using a result of Goldberg and Straus \cite[Theorem~7]{GS-1977-LAA}.
  This proof is similar in spirit to our \Cref{cor:c-numerical-range-convex} because it involves majorization.
  However, it seems to us that the techniques in \cite{Poo-1980-LMA} cannot be used to reprove Westwick's result in the infinite dimensional setting even for finite rank $C$.
  We say this because both \cite[Lemma~1]{Poo-1980-LMA} and \cite[Theorem~7]{GS-1977-LAA} rely in an essential way on Birkhoff's Theorem \cite{Bir-1946-UNTRA}.
  The dependence of \cite[Theorem~7]{GS-1977-LAA} on Birkhoff's Theorem is not readily apparent, but can be observed through a careful analysis of the proof of \cite[Lemma~7]{GS-1977-LAA}.
}
independent proof of Westwick's convexity result in the infinite dimensional setting found in 45 years, which is especially significant because Westwick's proof used an unusual technique: Morse theory.

In addition, $\ocnr(A)$ is a \emph{conservative} modification of $\cnr(A)$ in the sense that $\cnr(A) \subseteq \ocnr(A) \subseteq \closure{\cnr(A)}$ (see \Cref{thm:cnr-dense-in-ocnr}), and moreover, if $C$ is finite rank, then $\ocnr(A) = \cnr(A)$.
Therefore, the orbit-closed $C$-numerical range constitutes an alternate natural extension of the $C$-numerical range to the infinite dimensional (and infinite rank) setting.
Moreover, because $\closure{\ocnr(A)} = \closure{\cnr(A)}$, all of Dirr and vom Ende's results (which concern the closure of the $C$-numerical range) are inherited by the orbit-closed $C$-numerical range.

Our main results are summarized in the list below.
Here $\lambda(C)$ denotes the eigenvalue sequence of a compact operator $C$ (see \cref{sec:notation}), $\maj$, $\submaj$ denote majorization and submajorization (see \Cref{def:sequence-majorization}), and $\ocspec(A)$ denotes the $\orbit(C)$-spectrum (see \Cref{def:c-spectrum}).
Reference \cref{sec:notation} for any other unfamiliar notation.
\begin{enumerate}
\item $\ocnr(A) = \cnr(A)$ if $C$ is finite rank (\Cref{prop:orbit-closure-equivalences}).
\item $\closure{\ocnr(A)} = \closure{\cnr(A)}$ (\Cref{thm:cnr-dense-in-ocnr}).
\item The map $(C,A) \mapsto \ocnr(A)$ is continuous (\Cref{thm:continuity}).
\item If $C \in \traceclass^{sa}$, then $\ocnr(A) = \set{ \trace(XA) \mid X \in \traceclass^{sa}, \lambda(X) \maj \lambda(C) }$ (\Cref{thm:c-numerical-range-via-majorization}).
\item If $C \in \traceclass^{sa}$, then $\ocnr(A)$ is convex (\Cref{cor:c-numerical-range-convex}).
\item If $C,C' \in \traceclass^{sa}$ and $\lambda(C) \maj \lambda(C')$, then $\ocnr(A) \subseteq \ocnr[C'](A)$ (\Cref{cor:majorization-c-numerical-range-inclusion}).
\item If $C \in \traceclass^{sa}$, $A \in \K$, then
  \begin{equation*}
    \closure{\ocnr(A)} = \set{ \trace(XA) \mid X \in \traceclass^{sa}, \lambda(X) \submaj \lambda(C)} = \ocnr[C \oplus \zop](A \oplus \zop)
  \end{equation*}
  as long as $\zop$ acts on a space of dimension at least $\rank C$
(\Cref{thm:compact-closed-iff-contains-weak-majorization} and \Cref{cor:compact-closure-direct-sum-zero}).
\item\label{item:viii} For $C \in \traceclass^+$, $\ocnr(A)$ is closed if for every $\theta$, $\rank (\Re(e^{i\theta}A)-m_{\theta}I)_+ \ge \rank C$, where $m_{\theta} := \max \essspec(\Re(e^{i\theta} A))$ (\Cref{thm:ocnr-closed-rank-condition}).
\item If $C \in \traceclass^+$, then (\Cref{thm:direct-sum-characterization})
  \begin{equation*}
    \ocnr(A_1 \oplus A_2) = \conv \quad \bigcup_{\mathclap{\quad C_1 \oplus C_2 \in \orbit(C)}} \ \big( \ocnr[C_1](A_1) + \ocnr[C_2](A_2) \big).
  \end{equation*}
\item If $C \in \traceclass^+$, $A \in \K$ normal, then $\ocnr(A) = \conv \ocspec(A)$ (\Cref{thm:normal-convex-c-spectrum}).
\item If $C \in \traceclass^+$ with $\dim \ker C \in \set{0,\infty}$, and $A \in B(\Hil)$ is diagonalizable, then $\cnr(A)$ is convex (\Cref{cor:c-numerical-range-convex-diagonalizable}).
\end{enumerate}
Many of the results listed above are extensions of facts which are known in either the finite dimensional or finite rank setting.
However, to our knowledge, \ref{item:viii} appears to be entirely new.

This paper is structured as follows.
In \cref{sec:notation} we specify some notation.
\Cref{sec:orbit-closed-c-numerical-range} contains fundamental properties of the orbit-closed $C$-numerical range for general trace-class operators $C$.
Then in \cref{sec:majorization-and-convexity} we restrict attention to selfadjoint $C$ and establish a characterization of the orbit-closed $C$-numerical range in terms of majorization (\Cref{thm:c-numerical-range-via-majorization}) which is the main theorem that paves the way for all our other primary results; it has as a direct corollary the convexity of the orbit-closed $C$-numerical range (\Cref{cor:c-numerical-range-convex}).
In \cref{sec:boundary-points} we undertake a thorough investigation of points on the boundary $\partial \ocnr(A)$, including an analysis specific to the case when $A$ is compact in subsection~\ref{subsec:compact-operators}.
We obtain necessary and sufficient conditions for $\ocnr(A)$ to be closed when $A$ is compact and $C$ is selfadjoint (\Cref{thm:compact-closed-iff-contains-weak-majorization}).
Beginning in subsection~\ref{subsec:bounded-operators} we restrict our attention to positive $C$ for the remainder of the paper, and there we provide a sufficient condition for $\ocnr(A)$ to be closed when $A \in B(\Hil)$ (\Cref{thm:ocnr-closed-rank-condition}).
In \cref{sec:oc-spectrum} we characterize the behavior of the orbit-closed $C$-numerical range under finite direct sums (\Cref{thm:direct-sum-characterization}) and prove $\ocnr(A) = \conv \ocspec(A)$ when $A$ is compact normal (\Cref{thm:normal-convex-c-spectrum}).
Finally, in \cref{sec:c-numerical-range-convexity} we use variations of the Schur--Horn theorem for positive compact operators to prove that the $C$-numerical range $W_C(A)$ is convex when $A$ is diagonalizable and $C$ is positive with either trivial or infinite dimensional kernel (\Cref{cor:c-numerical-range-convex-diagonalizable}), thereby providing a partial answer to the question \cite[Open Problem (b)]{DvE-2020-LaMA} posed by Dirr and vom Ende.

\section{Notation}
\label{sec:notation}

Let $\K$ denote the ideal of compact operators in $B(\Hil)$ and $\traceclass$ the ideal of trace-class operators, and $\K^{sa}, \traceclass^{sa}$ and $\K^+, \traceclass^+$ the selfadjoint and positive operators in these ideals.

For a compact operator $C$, let $\lambda(C)$ denote the \term{eigenvalue sequence} of $C$, that is $\lambda(C)$ is the sequence of eigenvalues of $C$ listed in order of decreasing modulus and repeated according to algebraic multiplicity, and concatenated with zeros if there are only finitely many eigenvalues; when $C$ is normal the algebraic and geometric multiplicities coincide.
Note that the sequence is not necessarily uniquely determined (since unequal eigenvalues may have the same modulus), and it omits any zero eigenvalue entirely if $C$ has infinitely many nonzero eigenvalues.

Let $\czstar$ denote the set of all nonnegative nonincreasing sequences converging to zero.
Given a nonnegative sequence $\lambda$ converging to zero (not necessarily monotone), the \term{monotonization} $\lambda^{*} \in \czstar$ of $\lambda$ is the measure-theoretic \term{nonincreasing rearrangement} relative to the counting measure on $\mathbb{N}$.
In other words, $\lambda^{*}_k$ is the $k$\textsuperscript{th} largest entry of $\lambda$ repeated according to multiplicity.
Note that if $\lambda$ has infinite support, then $\lambda^{*}$ is never zero.

For a real-valued sequence $\lambda$ converging to zero, it is often useful to ``split'' $\lambda$ into its positive and negative parts.
To this end, we define $\lambda^+ := (\max\set{\lambda,0})^{*}$, where the maximum is taken pointwise, and $\lambda^- := (-\lambda)^+$.
So the nonzero entries of $\lambda^+$ and $-\lambda^-$ are precisely the nonzero entries of $\lambda$, but it is possible that one of $\lambda^{\pm}$ maybe have zero entries which do not appear in the sequence $\lambda$.

When $C$ is a selfadjoint compact operator, we can apply the above splitting to the eigenvalue sequence $\lambda(C)$.
Then the nonzero entries of $\lambda^+(C)$ and $-\lambda^-(C)$ are precisely the nonzero entries of $\lambda(C)$, but it is possible that one of $\lambda^{\pm}(C)$ maybe have zero entries which are \emph{not} eigenvalues of $C$.
Indeed, this occurs when exactly one of $C_{\pm}$ is finite rank and $\ker C$ is trivial.
This is a technical issue which plays a minor role.

For a compact operator $C$, we denote by $s(C)$ the \term{singular value sequence} ($=\lambda(\abs{C})$), which for $C \in \K^+$ coincides with the eigenvalue sequence.
For a positive compact operator $C$, we will use $s(C)$ to refer to the eigenvalue sequence $\lambda(C)$ in order to emphasize positivity of the operator $C$.

For $A \in B(\Hil)$, we denote by $\ugroup(A)$ the unitary orbit of $A$ under the action of the unitary group $\ugroup$ by conjugation.
For a trace-class operator $C$, we will let $\orbit(C)$ denote the trace-norm closure of the unitary orbit $\ugroup(C)$, and we refer to $\orbit(C)$ as the \term{orbit} of $C$.

For $A \in B(\Hil)$, $\Re A, \Im A$ denote the real and imaginary parts of $A$, and $\spec(A), \ptspec(A), \essspec(A)$ are the spectrum, point spectrum and essential spectrum of $A$, respectively.
If $A$ is selfadjoint, then $A_+,A_-$ denote the positive and negative parts of $A$.
In addition, if $E \subseteq \reals$ is Borel, then $\chi_E(A)$ denotes the spectral projection of $A$ corresponding to the set $E$.

For a set $S$ in a (real or complex) vector space we let $\conv S$ denote the (not necessarily closed) \emph{convex hull} of $S$.
That is, $\conv S$ is the smallest convex set containing $S$.

\section{The orbit-closed $C$-numerical range}
\label{sec:orbit-closed-c-numerical-range}

When working in an infinite dimensional operator algebra such as $B(\Hil), \K$ or a type II factor, it is often important to substitute the unitary orbit $\ugroup(C)$ of an operator with its closure in an appropriate operator topology.
This appears repeatedly throughout the literature, especially in relation to majorization.
For example, Arveson and Kadison \cite{AK-2006-OTOAaA} considered $\orbit(C)$ for $C \in \traceclass^+$ when investigating diagonals of positive trace-class operators and the Schur--Horn theorem, which is a characterization of the diagonals in terms of majorization.
Likewise, when Kaftal and Weiss extended the Schur--Horn theorem to positive compact operators $C \in \K^+$, they implicitly provided their primary characterization in terms\footnote{In \cite{KW-2010-JFA}, this is actually stated in terms of the so-called \emph{partial isometry orbit, $\mathcal{V(C)}$}, but \cite[Proposition~2.1.12]{Lor-2016} guarantees that $\closure[\norm{\bigcdot}]{\ugroup(C)} = \mathcal{V}(C)$ for $C \in \K^+$.} of the norm closure $\closure[\norm{\bigcdot}]{\ugroup(C)}$ of the unitary orbit \cite[Proposition~6.4]{KW-2010-JFA}.
In addition, when Dykema and Skoufranis studied numerical ranges in II$_1$ factors \cite{DS-2017-PEMSIS}, they also used the norm closure of the unitary orbit.
For $C$ selfadjoint, the net effect of taking the closure in each of these situations is to make the eigenvalue sequence\footnote{in the case of II$_1$ factors, the analogous notion is the \emph{spectral scale}.} $\lambda(C)$ a complete invariant for the closure of the unitary orbit of $C$.
The reason this phenomenon does not appear in the finite dimensional setting, or even in the case of $C$ finite rank, is that the unitary orbit is already closed.
The next proposition is a generalization of \cite[Proposition~3.1]{AK-2006-OTOAaA} and makes all of this intuition precise.

\begin{proposition}
  \label{prop:orbit-closure-equivalences}
  If $C \in \K$ is a compact normal operator, then the following are equivalent.
  \begin{enumerate}
  \item\label{item:1} $X \in \closure[\norm{\bigcdot}]{\ugroup(C)}$; that is, $X$ is approximately unitarily equivalent to $C$.
  \item\label{item:2} $X$ is compact normal and $\lambda(X) = \lambda(C)$ (up to a suitable permutation).
  \item\label{item:3} $X \oplus \zop \in \ugroup(C \oplus \zop)$ where the size of $\zop$ is infinite.
  \end{enumerate}
  If in addition $C \in \traceclass$, then these are also equivalent to
  \begin{enumerate}[resume]
  \item\label{item:4} $X \in \orbit(C)$.
  \end{enumerate}
  When $C$ has finite rank, even if $C$ is not normal, then
  \begin{equation*}
    \overline{\ugroup(C)}^{\norm{\bigcdot}} = \orbit(C) = \ugroup(C).
  \end{equation*}
\end{proposition}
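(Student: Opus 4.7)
The plan is to establish the cyclic chain \ref{item:1} $\Rightarrow$ \ref{item:2} $\Rightarrow$ \ref{item:3} $\Rightarrow$ \ref{item:1}, then fold \ref{item:4} into the loop in the trace-class case, and finally dispatch the finite-rank statement.

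For \ref{item:1} $\Rightarrow$ \ref{item:2}, write $X = \lim_n U_n C U_n^*$ in operator norm. Compactness and normality are each preserved under norm limits, so $X \in \K$ is normal. The non-trivial content is that $\eig(X) = \eig(C)$ as multisets. For each nonzero $\mu \in \ptspec(X)$, choose a small circle $\gamma$ in the resolvent set enclosing only $\mu$ among the spectrum of $X$; since $U_n C U_n^*$ converges to $X$ in norm, $\gamma$ eventually lies in the resolvent set of $U_n C U_n^*$, and the Riesz projections $\frac{1}{2\pi i} \oint_\gamma (z - U_n C U_n^*)^{-1}\,dz$ converge in norm to the spectral projection of $X$ at $\mu$. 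Rank is lower semicontinuous and constant on the tail, so the multiplicities match; doing this simultaneously for every nonzero eigenvalue of $X$ and of $C$ yields $\eig(X) = \eig(C)$ up to permutation.

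For \ref{item:2} $\Rightarrow$ \ref{item:3}, both $X \oplus \zop$ and $C \oplus \zop$ are compact and normal, share the same nonzero eigenvalue sequence with multiplicities, and both have infinite-dimensional kernel (the ambient infinite $\zop$ summand guarantees this regardless of $\dim \ker X$ or $\dim \ker C$). The spectral theorem for compact normal operators then produces a unitary intertwiner, giving \ref{item:3}. For \ref{item:3} $\Rightarrow$ \ref{item:1} — which also takes care of \ref{item:2} $\Rightarrow$ \ref{item:1} — fix $\epsilon > 0$ and use the spectral cut: let $P_\epsilon^X := \chi_{\{|z|\geq \epsilon\}}(X)$ and similarly $P_\epsilon^C$, so that $X = X P_\epsilon^X + X(I-P_\epsilon^X)$ with $\|X(I-P_\epsilon^X)\| < \epsilon$, and analogously for $C$. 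Since $P_\epsilon^X \Hil$ and $P_\epsilon^C \Hil$ are finite-dimensional of the same dimension with matching eigendata (by \ref{item:2}), and their orthogonal complements are both infinite-dimensional, there is a unitary $U_\epsilon$ carrying $P_\epsilon^C \Hil$ onto $P_\epsilon^X \Hil$ eigenspace-by-eigenspace and mapping $(P_\epsilon^C \Hil)^\perp$ onto $(P_\epsilon^X \Hil)^\perp$. Then $U_\epsilon C U_\epsilon^*$ agrees with $X$ on $P_\epsilon^X \Hil$ and has norm $< \epsilon$ on the complement, so $\vnorm{U_\epsilon C U_\epsilon^* - X} < 2\epsilon$.

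When $C \in \traceclass$, \ref{item:4} $\Rightarrow$ \ref{item:1} is immediate from $\norm{\bigcdot} \le \norm{\bigcdot}_1$. For \ref{item:1} $\Rightarrow$ \ref{item:4}, rerun the above truncation argument in trace norm: the unitary $U_\epsilon$ still exactly matches the finite-rank parts, while the trace norm of the residuals is $\sum_{|\eig_j(C)|<\epsilon} |\eig_j(C)|$, a tail of a convergent series which tends to $0$ as $\epsilon \to 0^+$. Finally, for finite-rank $C$ (not necessarily normal), it suffices to show $\ugroup(C)$ is norm-closed, as then the three sets collapse. If $U_n C U_n^* \to X$ in norm, then $X$ has rank at most $\rank C =: r$ by lower semicontinuity of rank combined with the fact that $s(U_nCU_n^*) = s(C)$ is norm-continuous, forcing $\rank X = r$. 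The projections onto $\overline{\text{range}(U_nCU_n^*)}$ and $\overline{\text{range}(U_nC^*U_n^*)}$ converge in norm to the analogous projections of $X$, so the essential supports live in a finite-dimensional subspace of dimension at most $2r$ in the limit. Restricting attention to this reduction, one extracts a subsequence of restricted unitaries converging in the compact unitary group of the $2r$-dimensional support, and pieces it together with an arbitrary unitary on the (infinite-dimensional) complement to produce $U$ with $X = UCU^*$.

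The main obstacle I expect is the eigenvalue-matching step \ref{item:1} $\Rightarrow$ \ref{item:2} — one must be careful to handle eigenvalues of the same modulus simultaneously (the sequence $\eig(C)$ is only defined up to such permutations) — and the finite-rank non-normal claim, where extracting a genuine unitary equivalence from a norm-convergent sequence of conjugates requires the finite-dimensional support reduction rather than a spectral-theoretic argument. The remaining implications are essentially direct consequences of the spectral theorem combined with the eigenvalue-truncation construction.
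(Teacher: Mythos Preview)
Your argument is correct, and the eigenvalue-truncation construction you use for \ref{item:2}$\Rightarrow$\ref{item:1} and for the trace-class upgrade \ref{item:2}$\Rightarrow$\ref{item:4} is essentially the same device the paper uses for \ref{item:2}$\Rightarrow$\ref{item:4}. Where you diverge is in the two places the paper outsources to the literature: for \ref{item:1}$\Leftrightarrow$\ref{item:2} the paper simply invokes the theorem of Gellar and Page on approximate unitary equivalence of normal operators, whereas you reprove the eigenvalue matching by hand via norm-continuity of Riesz projections; and for the finite-rank claim the paper cites Voiculescu's criterion that $\ugroup(C)$ is norm-closed precisely when the C*-algebra generated by $C$ is finite-dimensional, whereas you argue directly via support reduction and compactness of the finite-dimensional unitary group. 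Your route is more self-contained; the paper's is shorter and situates the facts in their natural generality. One small point in your finite-rank sketch: the maps $U_n|_M$ do not land exactly in the support $N$ of $X$ but only asymptotically near it, so before extracting a convergent subsequence you need a correcting sequence $W_n \to I$ conjugating the support of $U_n C U_n^*$ onto $N$ --- this is routine once you have norm-convergence of the range and corange projections, which you have already noted.
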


\begin{proof}
  \ref{item:1} $\Leftrightarrow$ \ref{item:2}.
  This is due to Gellar and Page \cite[Theorem~1]{GP-1974-DMJ} and the fact that all nonzero eigenvalues of a compact operator are isolated.
  
  \ref{item:2} $\Rightarrow$ \ref{item:3}.
  If $X,C$ are compact normal and $\lambda(X) = \lambda(C)$, then $X$ and $C$ have the same nonzero eigenvalues including multiplicity.
  Therefore $X \oplus \zop, C \oplus \zop$ not only have the same nonzero eigenvalues with multiplicity, but they also have zero as an eigenvalue of infinite multiplicity.
  Therefore $X \oplus \zop, C \oplus \zop$ are unitarily equivalent.

  \ref{item:3} $\Rightarrow$ \ref{item:2}.
  If $X \oplus \zop \in \ugroup(C \oplus \zop)$, then $X$ is compact normal since $C$ is also.
  Moreover, $\lambda(X) = \lambda(X \oplus \zop) = \lambda(C \oplus \zop) = \lambda(C)$.

  Now suppose that $C \in \traceclass$.

  \ref{item:4} $\Rightarrow$ \ref{item:1}.
  Trivial because the trace-norm topology on $\ugroup(C)$ is stronger than the operator norm topology.

  \ref{item:2} $\Rightarrow$ \ref{item:4}.
  Suppose that $X$ is compact normal and $\lambda(X) = \lambda(C)$.
  Clearly this implies that $X \in \traceclass$ since $C \in \traceclass$.
  Let $\epsilon > 0$ and take $N$ so that $\sum_{n=N+1}^{\infty} \abs{\lambda_n(C)} < \frac{\epsilon}{2}$.
  Since $X,C$ are normal and trace-class, they have orthonormal bases $\set{e_n}_{n=1}^{\infty}, \set{f_n}_{n=1}^{\infty}$ consisting of eigenvectors so that for $1 \le n \le N$, $X e_n = \lambda_n(X) e_n$ and $C f_n = \lambda_n(C) f_n$.
  Let $U$ be the unitary for which $Ue_n = f_n$.
  Then $UXU^{*},C$ are both diagonalized by the basis $\set{f_n}_{n=1}^{\infty}$.
  Therefore,
  \begin{equation*}
    \norm{UXU^{*} - C}_1 = \trace\big(\abs{UXU^{*} - C}\big) \le \sum_{n=N+1}^{\infty} \abs{\lambda_n(X)} + \sum_{n=N+1}^{\infty} \abs{\lambda_n(C)} < \epsilon.
  \end{equation*}
  Therefore $X \in \orbit(C)$.

  The claim for finite rank operators follows from the fact that the unitary orbit of an operator is norm closed if and only if the C*-algebra it generates is finite dimensional \cite[Proposition~2.4]{Voi-1976-RRMPA}, which is certainly the case for finite rank operators.
\end{proof}

Of particular importance to us here are the equivalences \ref{item:2} $\Leftrightarrow$ \ref{item:3} $\Leftrightarrow$ \ref{item:4} when $C$ is normal and trace-class, which we will make use of repeatedly throughout.

\begin{definition}
  \label{def:orbit-closed-c-numerical-range}
  Given a trace-class operator $C \in \traceclass$, we define the \term{orbit-closed $C$-numerical range} of an operator $A \in B(\Hil)$ by
  \begin{equation*}
    \ocnr(A) := \{ \trace(XA) \mid X \in \orbit(C) \}.
  \end{equation*}
\end{definition}

It is clear from the definition of the orbit-closed $C$-numerical range that $\cnr(A) \subseteq \ocnr(A)$ but the inclusion is, in general, strict as the next example shows.

\begin{example}
  \label{ex:cnr-not-equal-ocnr}
  Suppose $C$ is a strictly positive trace-class operator and $A$ is a positive operator with infinite dimensional kernel, then $0 \in \ocnr(A) \setminus \cnr(A)$.
  Indeed, if $X \in \ugroup(C)$, then $X$ is strictly positive and therefore $\trace(XA) = \trace(X^{\frac{1}{2}} A X^{\frac{1}{2}}) > 0$ since $X^{\frac{1}{2}} A X^{\frac{1}{2}}$ is a nonzero positive operator and the trace is faithful.
  Therefore $0 \notin \cnr(A)$ since $X \in \ugroup(C)$ was arbitrary.
  On the other hand, since $\ker A$ is infinite dimensional, there is some positive trace-class $X'$ which acts on $\ker A$ with $\lambda(X') = \lambda(C)$.
  Then $X := X' \oplus \zop_{\ker^{\perp}\!\!A}$ satisfies $\lambda(X) = \lambda(C)$, so $X \in \orbit(C)$ by \Cref{prop:orbit-closure-equivalences}.
  Moreover, $0 = \trace(XA) \in \ocnr(A)$.
\end{example}

By \Cref{prop:orbit-closure-equivalences}, for finite rank operators $\ugroup(C) = \orbit(C)$, and hence in this case we have equality $\ocnr(A) = \cnr(A)$.
In particular, if $P$ is a rank-$k$ projection, then $\ocnr[\frac{1}{k}P](A)$ is just the $k$-numerical range $\knr(A)$.
This, along with the following theorem, justifies our claim that the orbit-closed $C$-numerical range is a conservative modification of the $C$-numerical range.

\begin{theorem}
  \label{thm:cnr-dense-in-ocnr}
  If $C \in \traceclass$ is a trace-class operator and $A \in B(\Hil)$, then $\cnr(A)$ is dense in $\ocnr(A)$.
  In particular, $\closure{\cnr(A)} = \closure{\ocnr(A)}$.
\end{theorem}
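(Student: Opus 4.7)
The plan is essentially to chase the definitions and exploit the elementary trace inequality $\abs{\trace(YA)} \le \norm{Y}_1 \norm{A}$ which says that the map $Y \mapsto \trace(YA)$ is continuous on $\traceclass$ with respect to the trace norm whenever $A \in B(\Hil)$.

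More concretely, let $z \in \ocnr(A)$, so by definition $z = \trace(XA)$ for some $X \in \orbit(C) = \closure[\norm{\bigcdot}_1]{\ugroup(C)}$. Then there exists a sequence $X_n \in \ugroup(C)$ with $\norm{X_n - X}_1 \to 0$. Each value $z_n := \trace(X_n A)$ lies in $\cnr(A)$, and
\begin{equation*}
  \abs{z_n - z} = \abs{\trace\bigl((X_n - X)A\bigr)} \le \norm{X_n - X}_1 \norm{A} \to 0.
\end{equation*}
Thus every point of $\ocnr(A)$ is a limit of points of $\cnr(A)$, establishing density.

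The ``In particular'' clause is then routine: since $\cnr(A) \subseteq \ocnr(A)$, we have $\closure{\cnr(A)} \subseteq \closure{\ocnr(A)}$; conversely, density of $\cnr(A)$ in $\ocnr(A)$ gives $\ocnr(A) \subseteq \closure{\cnr(A)}$, hence $\closure{\ocnr(A)} \subseteq \closure{\cnr(A)}$.

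Honestly there is no real obstacle here — the statement is almost a tautology once one recognizes that $\trace(\bigcdot\, A)$ is a bounded linear functional on $\traceclass$ of norm at most $\norm{A}$, and that the definition of $\orbit(C)$ is precisely the trace-norm closure of $\ugroup(C)$. The only small thing worth flagging is ensuring that the bound $\abs{\trace(YA)} \le \norm{Y}_1 \norm{A}$ is invoked correctly (this is the standard duality between $\traceclass$ and $B(\Hil)$), but this is a classical fact that the paper implicitly assumes throughout.
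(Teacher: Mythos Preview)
Your proof is correct and follows essentially the same approach as the paper's own proof: both take an arbitrary $X \in \orbit(C)$, approximate it in trace norm by a sequence from $\ugroup(C)$, and apply the inequality $\abs{\trace(YA)} \le \norm{Y}_1 \norm{A}$ to conclude that the corresponding traces converge.
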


\begin{proof}
  This is a direct consequence of the continuity of the map $(X,A) \mapsto \trace(XA)$ from $\traceclass \times B(\Hil) \to \complex$, where $\traceclass$ denotes the ideal of trace-class operators equipped with the trace norm.

  To be more specific, if $X \in \orbit(C)$, then there is a sequence of unitaries $U_n \in \ugroup$ such that $U_n C U_n^{*} \xrightarrow{\norm{\bigcdot}_1} X$.
  Then
  \begin{equation*}
    \abs{\trace(XA) - \trace(U_n C U_n^{*} A)} = \abs{\trace\big((X-U_n C U_n^{*})A\big)} \le \norm{X-U_n C U_n^{*}}_1 \norm{A}.
  \end{equation*}
  Therefore $\trace(U_n C U_n^{*} A) \to \trace(XA)$, proving that $\cnr(A)$ is dense in $\ocnr(A)$.
  Because the inclusion $\cnr(A) \subseteq \ocnr(A)$ is trivial, this yields $\closure{\cnr(A)} = \closure{\ocnr(A)}$.
\end{proof}

We note that in general $\ocnr(A)$ is not closed, so it is not simply the closure of $\cnr(A)$.
Indeed, when $C$ is a rank-one projection $\ocnr(A) = \nr(A)$, which need not be closed.

As a follow up to the previous theorem, we prove that the orbit-closed $C$-numerical range is a continuous function from pairs of operators (trace-class and bounded) to bounded subsets of the plane equipped with the Hausdorff distance $d_H$ which is only a pseudometric unless one restricts to compact sets.
The Hausdorff distance on bounded sets is defined as
\begin{equation*}
  d_H(Y,Z) := \max \left\{ \sup_{y\in Y} d(y,Z),\  \sup_{z \in Z} d(z,Y) \right\}.
\end{equation*}
As with any pseudometric, the Hausdorff distance $d_H$ generates a (ironically, non-Hausdorff) topological space whose basis consists of the open balls.
Since this topological space is not Hausdorff, limits are not unique, but two sets $Y,Z$ are limits of the same sequence if and only if $d_H(Y,Z) = 0$ if and only if $\closure{Y} = \closure{Z}$.
This latter fact about the closures follows immediately from the definition of $d_H$, which guarantees that two bounded sets have Hausdorff distance zero if and only if they have the same closure.

\begin{theorem}
  \label{thm:continuity}
  The function $(C,A) \mapsto \ocnr(A)$ from $\traceclass \times B(\Hil)$ equipped with the norm $\norm{(C,A)} = \norm{C}_1 + \norm{A}$ to bounded subsets of $\complex$ is continuous, where the latter is equipped with the Hausdorff pseudometric, denoted $d_H$.
  In fact, the function is Lipschitz in each variable separately with Lipschitz constant the norm (or trace norm) of the fixed operator.
  That is,
  \begin{equation*}
    d_H\big(\ocnr(A),\ocnr[C'](A)\big) \le \norm{C-C'}_1 \norm{A},
  \end{equation*}
  and
  \begin{equation*}
    d_H\big(\ocnr(A),\ocnr(A')\big) \le \norm{C}_1 \norm{A-A'}.
  \end{equation*}
\end{theorem}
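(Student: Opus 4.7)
The plan is to prove the two separate Lipschitz estimates and then obtain joint continuity by the triangle inequality for $d_H$, namely
\begin{equation*}
  d_H\bigl(\ocnr(A),\ocnr[C'](A')\bigr) \le d_H\bigl(\ocnr(A),\ocnr[C'](A)\bigr) + d_H\bigl(\ocnr[C'](A),\ocnr[C'](A')\bigr).
\end{equation*}
Since the right-hand side tends to $0$ as $(C',A') \to (C,A)$, continuity at $(C,A)$ follows. So the real task is the two displayed inequalities.

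For the $A$-variable bound I would use the fact that the trace norm is unitarily invariant and continuous in trace norm, so every $X \in \orbit(C)$ satisfies $\norm{X}_1 = \norm{C}_1$. Given $z = \trace(XA) \in \ocnr(A)$, the point $z' := \trace(XA') \in \ocnr(A')$ satisfies $\abs{z-z'} = \abs{\trace\bigl(X(A-A')\bigr)} \le \norm{C}_1 \norm{A-A'}$. This bounds $d\bigl(z,\ocnr(A')\bigr)$ uniformly in $z$; the symmetric argument yields the Hausdorff bound.

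The $C$-variable bound is the marginally more delicate step, since given $X \in \orbit(C)$ there is no canonical element of $\orbit(C')$ to pair it with. Instead, I would approximate: choose unitaries $U_n$ with $U_n C U_n^* \to X$ in trace norm, and consider $z'_n := \trace(U_n C' U_n^* A) \in \cnr[C'](A) \subseteq \ocnr[C'](A)$. Writing
\begin{equation*}
  X - U_n C' U_n^* = \bigl(X - U_n C U_n^*\bigr) + U_n(C-C')U_n^{*},
\end{equation*}
and applying the trace-duality estimate $\abs{\trace(YA)} \le \norm{Y}_1 \norm{A}$ together with unitary invariance of $\norm{\bigcdot}_1$, I get
\begin{equation*}
  \abs{z - z'_n} \le \bigl(\norm{X - U_n C U_n^*}_1 + \norm{C-C'}_1\bigr) \norm{A}.
\end{equation*}
Letting $n \to \infty$, the first term vanishes, so $d\bigl(z,\ocnr[C'](A)\bigr) \le \norm{C-C'}_1 \norm{A}$. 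A symmetric argument (reversing the roles of $C$ and $C'$) completes the Hausdorff estimate.

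There is no major obstacle here; the only subtlety worth noting is that $\ocnr[C'](A)$ need not be closed, so the estimate on $d(z,\ocnr[C'](A))$ is obtained as a $\limsup$ of distances to the concrete points $z'_n \in \cnr[C'](A)$ rather than by producing a single optimal point — but since $d(z,\cdot)$ is defined as an infimum this causes no problem.
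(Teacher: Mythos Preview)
Your proof is correct and follows essentially the same approach as the paper. The only minor presentational difference is in the $C$-variable estimate: the paper first uses \Cref{thm:cnr-dense-in-ocnr} to observe $d_H\bigl(\ocnr(A),\cnr(A)\bigr)=0$ and then proves the Lipschitz bound for $\cnr$ (where one can take $X=UCU^{*}$ and pair it directly with $X'=UC'U^{*}$, avoiding any limit), whereas you work directly in $\orbit(C)$ and pass to the limit along approximating unitaries---but the underlying idea (conjugate $C$ and $C'$ by the same unitary) is identical.
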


\begin{proof}
  This is a direct consequence of the continuity of the map $(X,A) \mapsto \trace(XA)$.
  Indeed, notice that for any $X \in \orbit(C)$ and $A,A' \in B(\Hil)$, we have
  \begin{equation*}
    \abs{\trace(XA) - \trace(XA')} = \abs{\trace\big(X(A-A')\big)} \le \norm{X}_1 \norm{A-A'} = \norm{C}_1 \norm{A-A'}.
  \end{equation*}
  Since $\trace(XA), \trace(XA')$ represent arbitrary members of $\ocnr(A),\ocnr(A')$, we find
  \begin{equation*}
    d_H\big(\ocnr(A),\ocnr(A')\big) \le \norm{C}_1 \norm{A-A'}.
  \end{equation*}

  For the Lipschitz continuity in the other variable, notice that $d_H\big(\ocnr(A),\cnr(A)\big) = 0$ since these sets have the same closure by \Cref{thm:cnr-dense-in-ocnr}.
  So, it suffices to prove the result for the $C$-numerical range.
  Let $X \in \ugroup(C)$, so that $X = UCU^{*}$ for some unitary $U$.
  Then let $X' := UC'U^{*}$.
  Therefore
  \begin{equation*}
    \abs{\trace(XA) - \trace(X'A)} = \abs{\trace\big((X-X')A)\big)} \le \norm{X-X'}_1 \norm{A} = \norm{C-C'}_1 \norm{A}.
  \end{equation*}
  By a symmetric argument we obtain
  \begin{equation*}
    d_H\big(\cnr(A),\cnr[C'](A)\big) \le \norm{C-C'}_1 \norm{A},
  \end{equation*}
  and hence also
  \begin{equation*}
    d_H\big(\ocnr(A),\ocnr[C'](A)\big) \le \norm{C-C'}_1 \norm{A}. \qedhere
  \end{equation*}
\end{proof}

\begin{corollary}
  \label{cor:approximate-unitary-equivalence-same-closure}
  $\closure{\ocnr(A)} = \closure{\ocnr(A')}$ if $A,A'$ are approximately unitarily equivalent.
\end{corollary}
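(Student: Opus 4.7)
The plan is to combine two facts: the unitary invariance of $\ocnr(\bigcdot)$ in its argument, and the Lipschitz continuity of $A \mapsto \ocnr(A)$ established in \Cref{thm:continuity}.

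First I would verify unitary invariance: for any unitary $V$, $\ocnr(VAV^{*}) = \ocnr(A)$. This follows from the trace identity $\trace(X \cdot VAV^{*}) = \trace(V^{*}XV \cdot A)$ together with the fact that the orbit $\orbit(C)$ is closed under unitary conjugation (if $X$ is a trace-norm limit of unitary conjugates of $C$, then so is $V^{*}XV$). Hence $\set{\trace(X \cdot VAV^{*}) \mid X \in \orbit(C)} = \set{\trace(YA) \mid Y \in \orbit(C)}$.

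Next, since $A$ and $A'$ are approximately unitarily equivalent, there is a sequence of unitaries $V_n$ with $\norm{V_n A V_n^{*} - A'} \to 0$. Applying the Lipschitz estimate from \Cref{thm:continuity} with the second variable replaced, together with the unitary invariance just proved, gives
\begin{equation*}
  d_H\big(\ocnr(A), \ocnr(A')\big) = d_H\big(\ocnr(V_n A V_n^{*}), \ocnr(A')\big) \le \norm{C}_1 \norm{V_n A V_n^{*} - A'} \xrightarrow{n\to\infty} 0.
\end{equation*}
Thus $d_H\big(\ocnr(A), \ocnr(A')\big) = 0$, and by the characterization of Hausdorff pseudometric zero (noted immediately before \Cref{thm:continuity}), this yields $\closure{\ocnr(A)} = \closure{\ocnr(A')}$.

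There is no real obstacle here; the only subtlety is checking unitary invariance of the orbit $\orbit(C)$ under conjugation, which is a direct consequence of the definition $\orbit(C) = \closure[\norm{\bigcdot}_1]{\ugroup(C)}$ and the fact that conjugation by a fixed unitary is an isometry of the trace norm.
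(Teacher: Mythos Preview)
Your proof is correct and follows essentially the same approach as the paper: both argue via unitary invariance of $\ocnr(\bigcdot)$ together with the Lipschitz continuity from \Cref{thm:continuity}, concluding that $d_H(\ocnr(A),\ocnr(A')) = 0$ and hence the closures agree. Your version is slightly more explicit in justifying why $\orbit(C)$ is invariant under unitary conjugation, but the structure is identical.
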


\begin{proof}
  Since $A,A'$ are approximately unitarily equivalent, there are unitaries $U_n$ such that $U_n A U_n^{*} \to A'$, and therefore $d_H \big( \ocnr(U_n A U_n^{*}), \ocnr(A') \big) \to 0$ by \Cref{thm:continuity}.
  However, $\ocnr(A) = \ocnr(U_n A U_n^{*})$ since conjugation by the unitary $U_n$ may be absorbed into $\orbit(C)$, whence $d_H \big( \ocnr(A), \ocnr(A') \big) = 0$.
  Thus $\closure{\ocnr(A)} = \closure{\ocnr(A')}$.
\end{proof}

The following proposition provides some basic facts concerning the orbit-closed $C$-numerical range, all of which follow easily from fundamental properties of the trace.

\begin{proposition}
  \label{prop:c-numerical-range-basics}
  Given a trace-class operator $C \in \traceclass$, $A \in B(\Hil)$ and $a,b \in \complex$,
  \begin{enumerate}
  \item\label{item:positivity-ocnr} if $A \in B(\Hil)^+$ and $C \in \traceclass^+$, then $\ocnr(A) \subseteq [0,\infty)$;
  \item\label{item:hermitian-ocnr} if $C$ is selfadjoint, then for any $X \in \orbit(C)$, $\Re(\trace(XA)) = \trace(X\Re A)$, and so $\Re \ocnr(A) = \ocnr(\Re A)$;
  \item\label{item:similarity-preserving-ocnr} $\ocnr(aI+bA) = a \trace C + b \ocnr(A)$.
  \end{enumerate}
  Moreover, the same results hold for $\cnr(A)$.
\end{proposition}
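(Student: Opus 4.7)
The plan is to handle each of the three items separately, in each case reducing a statement about $\ocnr(A)$ to a statement about a single trace $\trace(XA)$ with $X \in \orbit(C)$, and relying on two standing facts: that the sets $\traceclass^+$ and $\traceclass^{sa}$ are closed in trace norm (so positivity/selfadjointness of $C$ is inherited by every $X \in \orbit(C)$), and that $\trace$ is continuous in trace norm and unitarily invariant (so $\trace X = \trace C$ for every $X \in \orbit(C)$).

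For \ref{item:positivity-ocnr}, I would first note that the unitary orbit of a positive operator consists of positive operators and that $\traceclass^+$ is closed in $\norm{\bigcdot}_1$; hence every $X \in \orbit(C)$ is positive. Then $\trace(XA) = \trace(X^{1/2}AX^{1/2}) \ge 0$ since $X^{1/2}AX^{1/2}$ is a positive trace-class operator. For \ref{item:hermitian-ocnr}, the analogous observation that every $X \in \orbit(C)$ is selfadjoint yields, via the cyclicity of the trace and the identity $\overline{\trace(XA)} = \trace(A^{*}X) = \trace(XA^{*})$, the desired equality $\Re \trace(XA) = \trace\bigl(X \tfrac{1}{2}(A+A^{*})\bigr) = \trace(X \Re A)$. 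Taking the union over $X \in \orbit(C)$ then gives $\Re \ocnr(A) = \ocnr(\Re A)$. For \ref{item:similarity-preserving-ocnr}, expanding $\trace\bigl(X(aI+bA)\bigr) = a\,\trace X + b\,\trace(XA)$ and using $\trace X = \trace C$ (which holds on $\ugroup(C)$ by unitary invariance and extends to $\orbit(C)$ by trace-norm continuity of $\trace$) gives the claim.

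The argument for $\cnr(A)$ is identical, since the key ingredients used are that elements of the orbit are positive/selfadjoint and have trace equal to $\trace C$; both properties hold trivially on $\ugroup(C) \subseteq \orbit(C)$. I do not expect any genuine obstacle here: the only minor subtlety is justifying that $\traceclass^+$ and $\traceclass^{sa}$ are closed in trace norm, and that $X \mapsto \trace X$ is $\norm{\bigcdot}_1$-continuous, both of which are standard and in fact immediate from $\abs{\trace Y} \le \norm{Y}_1$.
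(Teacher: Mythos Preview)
Your proposal is correct and follows essentially the same approach as the paper: the paper also reduces each item to a computation with a single $X \in \orbit(C)$, using that $X$ inherits positivity or selfadjointness from $C$, that $\trace(XA) = \trace(X^{1/2}AX^{1/2}) \ge 0$ in item~\ref{item:positivity-ocnr}, the identity $\trace(XA) + \overline{\trace(XA)} = \trace(X(A+A^{*}))$ in item~\ref{item:hermitian-ocnr}, and $\trace X = \trace C$ in item~\ref{item:similarity-preserving-ocnr}. Your additional remarks justifying the closedness of $\traceclass^+$, $\traceclass^{sa}$ and the trace-norm continuity of $\trace$ are details the paper leaves implicit.
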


\begin{proof}
  \ref{item:positivity-ocnr}.
  Consider $X \in \orbit(C)$, so that $X$ is positive and trace-class.
  If $A$ is also positive, then
  \begin{equation*}
    \trace(XA) = \trace(X^{\frac{1}{2}}AX^{\frac{1}{2}}) \ge 0,
  \end{equation*}
  since the trace is a positive linear functional.

  \ref{item:hermitian-ocnr}.
  If $C = C^{*}$, then for any $X \in \orbit(C)$ we have $X = X^{*}$.
  Therefore
  \begin{equation*}
    \trace(XA) + \overline{\trace(XA)} = \trace(XA) + \trace(A^{*}X^{*}) = \trace(XA) + \trace(XA^{*}) = \trace\big(X(A + A^{*})\big).
  \end{equation*}

  \ref{item:similarity-preserving-ocnr}.
  Note that $\trace\big(X(aI+bA)\big) = a \trace X + b \trace(XA)$, and since $\trace X = \trace C$ (because $X \in \orbit(C)$), we obtain $\ocnr(aI+bA) = a \trace C + b \ocnr(A)$.

  Of course, a simple examination of the above proof allows one to conclude that everything works for $X \in \ugroup(C)$.
  Therefore, these results also apply to $\cnr(A)$.
\end{proof}

\section{Majorization and convexity}
\label{sec:majorization-and-convexity}

In this section we establish our main theorem which characterizes the orbit-closed $C$-numerical range for $C$ selfadjoint in terms of majorization (\Cref{thm:c-numerical-range-via-majorization}), which directly yields convexity (\Cref{cor:c-numerical-range-convex}).
We begin by recalling the notion of majorization.

\begin{definition}
  \label{def:sequence-majorization}
  Given nonnegative sequences $d,\lambda$ converging to zero, we say that $d$ is \term{submajorized} by $\lambda$ and write $d \submaj \lambda$ if, for all $n \in \mathbb{N}$,
  \begin{equation*}
    \sum_{k=1}^n d^{*}_k \le \sum_{k=1}^n \lambda^{*}_k.
  \end{equation*}
  If, in addition, equality of the sums holds when $n=\infty$ (including the possibility that both sums are infinite), we say that $d$ is \term{majorized} by $\lambda$ and write $d \maj \lambda$.
  If equality of the sums holds for infinitely many $n \in \nats$, we say that $d$ is \term{block majorized} by $\lambda$.

  For real-valued sequences $d,\lambda \in \ell^1$, we say that $d$ is \term{submajorized} by $\lambda$, and write $d \submaj \lambda$, if $d^+ \submaj \lambda^+$ and $d^- \submaj \lambda^-$.
  If in addition there is equality for $\sum_{k=1}^{\infty} d_k = \sum_{k=1}^{\infty} \lambda_k$, we say that $d$ is \term{majorized} by $\lambda$, and we write $d \maj \lambda$.
\end{definition}

The reader should take note: if $d,\lambda \in \ell^1$ are real-valued sequences, $d \maj \lambda$ is strictly weaker than satisfying both $d^+ \maj \lambda^+$ and $d^- \maj \lambda^-$.
For example, the zero sequence is majorized by any sequence in $\ell^1$ whose sum is zero.

The next two results are due to Hiai and Nakamura in \cite{HN-1991-TAMS} and link majorization and submajorization to the closed convex hulls of unitary orbits in various operator topologies.
Their results apply in von Neumann algebras more generally, not just $B(\Hil)$, so we are stating simplified versions for our own needs.

\begin{proposition}[\protect{\cite[Theorem~3.3]{HN-1991-TAMS}}]
  \label{prop:wot-closure-convex-orbit-weak-majorization}
  For a selfadjoint compact operator $C \in \K^{sa}$,
  \begin{equation*}
    \{ X \in \K^{sa} \mid \lambda(X) \submaj \lambda(C) \} = \closure[\mathrm{wot}]{\conv\ugroup(C)} = \closure[\norm{\bigcdot}]{\conv\ugroup(C)}.
  \end{equation*}
\end{proposition}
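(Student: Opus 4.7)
The plan is to prove the two containments $\closure[\mathrm{wot}]{\conv\ugroup(C)} \subseteq M$ and $M \subseteq \closure[\norm{\bigcdot}]{\conv\ugroup(C)}$, where $M := \set{X \in \K^{sa} \mid \lambda(X) \submaj \lambda(C)}$; combined with the automatic inclusion $\closure[\norm{\bigcdot}]{\conv\ugroup(C)} \subseteq \closure[\mathrm{wot}]{\conv\ugroup(C)}$ (the norm topology being stronger than the WOT), this forces equality of all three sets.

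For the first containment, the key tool is Ky Fan's maximum principle: for each $Y \in \K^{sa}$ and $n \in \nats$,
\begin{equation*}
  \sum_{k=1}^n \lambda_k^+(Y) = \sup \set{ \trace(YP) \mid P = P^* = P^2,\ \rank P \le n },
\end{equation*}
with the analogous formula for $\lambda_k^-(Y)$ in terms of $-Y$. For fixed finite-rank $P$ the map $Y \mapsto \trace(YP)$ is affine and WOT-continuous, so each right-hand side is a convex, unitarily invariant, WOT-lower-semicontinuous function of $Y$. Consequently the Ky Fan inequalities defining $M$ are preserved under convex combinations of elements of $\ugroup(C)$ and survive WOT limits. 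Any such limit $X$ is automatically selfadjoint, and compactness follows from the Cesaro estimate $\lambda_k^\pm(Y) \le \tfrac{1}{k}\sum_{j=1}^k \lambda_j^\pm(C) \to 0$ valid uniformly on $\conv\ugroup(C)$, which implies uniform approximability in operator norm of the net by finite-rank operators of a bounded rank $N(\epsilon)$; this approximation property is inherited by the WOT limit $X$, placing it in $\K^{sa}$.

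The second containment $M \subseteq \closure[\norm{\bigcdot}]{\conv\ugroup(C)}$ is an infinite-dimensional Schur--Horn-type result for submajorization and is the main obstacle. I would approach it by finite-dimensional reduction through spectral truncation: set $X_n := \chi_{\set{\abs{t} \ge 1/n}}(X)\,X$ and take a finite-rank truncation $C_n$ of $C$, so that $X_n \to X$ and $C_n \to C$ in norm. For each $n$, apply the classical Horn/Mirsky theorem for Hermitian matrices after first padding by a zero block on a finite-dimensional space to absorb the submajorization slack as a majorization equality (in the spirit of \Cref{prop:orbit-closure-equivalences}\ref{item:3}). The delicate bookkeeping is to choose the finite convex combinations representing $X_n$ in terms of $C_n$ coherently enough that they assemble into a norm-Cauchy approximating sequence for $X$ in terms of $C$. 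A conceptually cleaner alternative is Hahn--Banach separation: if some $X_0 \in M$ fell outside the right-hand side, a trace-class functional $Y$ would strictly separate it from $\conv\ugroup(C)$, and then von Neumann's trace inequality for $\sup_{U \in \ugroup}\Re\trace(YUCU^*)$ combined with $\lambda(X_0) \submaj \lambda(C)$ would force a contradiction.
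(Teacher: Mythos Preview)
The paper does not supply a proof of this proposition; it is quoted verbatim from Hiai and Nakamura \cite[Theorem~3.3]{HN-1991-TAMS} as an external input, so there is no in-paper argument to compare against.

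As for your sketch itself, the overall architecture (Ky Fan for one inclusion, Hahn--Banach plus a rearrangement inequality for the other) is sound and is essentially how the cited result is proved. Two remarks. First, your compactness step is mis-argued: the claim that ``uniform approximability in operator norm by rank-$N(\epsilon)$ operators is inherited by the WOT limit'' is false in general (singular values are not WOT-lower-semicontinuous; e.g.\ powers of the bilateral shift converge WOT to $0$). The correct route is to pass the Ky Fan bounds themselves to the WOT limit $X$ first --- each $\trace(\,\cdot\,P)$ is WOT-continuous --- and \emph{then} read off compactness: if some $\epsilon>0$ lay in $\essspec(X_+)$ one would get $n\epsilon \le \sum_{k=1}^n \lambda_k^+(C)$ for all $n$, contradicting $\lambda_k^+(C)\to 0$. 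Second, for the inclusion $M \subseteq \closure[\norm{\bigcdot}]{\conv\ugroup(C)}$, your Hahn--Banach approach is the cleaner one and works, but in the general selfadjoint case it requires the two-sided rearrangement inequality (handling positive and negative parts simultaneously) and a verification that $\sup_{U}\trace(UCU^{*}Y)$ is at least the appropriate eigenvalue pairing; this is precisely the content of \cite{HN-1991-TAMS} and is not as immediate as your one-line description suggests.
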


Note that for $C$ trace-class, since the trace-norm topology on $\conv \ugroup(C)$ is stronger than the norm topology (or the weak operator topology), we may replace $\ugroup(C)$ in \Cref{prop:wot-closure-convex-orbit-weak-majorization} with $\orbit(C)$.

\begin{proposition}[\protect{\cite[Theorem~3.5(4)]{HN-1991-TAMS}}]
  \label{prop:convex-orbit-majorization}
  For a selfadjoint trace-class operator $C \in \traceclass^{sa}$,
  \begin{equation*}
    \{ X \in \traceclass^{sa} \mid \lambda(X) \maj \lambda(C) \} = \closure[\norm{\bigcdot}_1]{\conv\ugroup(C)} = \closure[\norm{\bigcdot}_1]{\conv\orbit(C)}.
  \end{equation*}
\end{proposition}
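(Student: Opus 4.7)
The plan proceeds in three stages. First, the second equality $\closure[\norm{\bigcdot}_1]{\conv\ugroup(C)} = \closure[\norm{\bigcdot}_1]{\conv\orbit(C)}$ is a formality: since $\orbit(C) = \closure[\norm{\bigcdot}_1]{\ugroup(C)}$ by definition, any convex combination of elements of $\orbit(C)$ is a trace-norm limit of convex combinations of elements of $\ugroup(C)$ via a diagonal approximation, so the two trace-norm closed convex hulls coincide.

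Second, for the inclusion $\closure[\norm{\bigcdot}_1]{\conv\ugroup(C)} \subseteq \mathcal{M}$, where $\mathcal{M} := \{X \in \traceclass^{sa} \mid \lambda(X) \maj \lambda(C)\}$, I would verify that $\mathcal{M}$ is a trace-norm closed convex set containing $\ugroup(C)$. The latter containment is immediate since unitary conjugation preserves the eigenvalue sequence. For convexity and closedness, Ky Fan's maximum principle expresses
\begin{equation*}
  \sum_{k=1}^n \lambda_k^+(X) = \sup\{\trace(PX) \mid P \in B(\Hil) \text{ a projection with } \rank P \le n\},
\end{equation*}
with an analogous formula for $\sum_{k=1}^n \lambda_k^-(X)$ (as a supremum of $-\trace(PX)$). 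These are suprema of trace-norm-continuous linear functionals, hence convex and trace-norm continuous in $X$. Intersecting the corresponding sublevel sets (at heights $\sum_{k=1}^n \lambda_k^{\pm}(C)$) with the affine hyperplane $\trace X = \trace C$ produces $\mathcal{M}$, which is therefore convex and trace-norm closed.

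Third, the inclusion $\mathcal{M} \subseteq \closure[\norm{\bigcdot}_1]{\conv\ugroup(C)}$ I would establish by Hahn--Banach separation. The continuous dual of $(\traceclass, \norm{\bigcdot}_1)$ is $B(\Hil)$ under the pairing $Y \mapsto \trace(YA)$. If some $X_0 \in \mathcal{M}$ failed to lie in the closed convex hull of $\ugroup(C)$, separation would produce selfadjoint $A \in B(\Hil)$ with $\trace(X_0 A) > \sup_{U \in \ugroup} \trace(UCU^{*}A)$. Applying the selfadjoint form of the von Neumann rearrangement inequality, the right side evaluates to
\begin{equation*}
  \sup_{U \in \ugroup} \trace(UCU^{*}A) = \sum_{k=1}^\infty \lambda_k^+(C)\lambda_k^+(A) + \sum_{k=1}^\infty \lambda_k^-(C)\lambda_k^-(A),
\end{equation*}
with absolute convergence guaranteed by $\lambda^{\pm}(C) \in \ell^1$ and $\lambda^{\pm}(A) \in \ell^\infty$. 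The same formula bounds $\trace(X_0 A) \le \sup_U \trace(UX_0 U^{*} A)$ with $\lambda(X_0)$ replacing $\lambda(C)$. The hypothesis $\lambda(X_0) \maj \lambda(C)$ supplies $\lambda^{\pm}(X_0) \submaj \lambda^{\pm}(C)$, and an Abel-summation-by-parts argument against the monotone nonnegative sequences $\lambda^{\pm}(A)$ yields
\begin{equation*}
  \sum_k \lambda_k^{\pm}(X_0)\lambda_k^{\pm}(A) \le \sum_k \lambda_k^{\pm}(C)\lambda_k^{\pm}(A),
\end{equation*}
giving $\trace(X_0 A) \le \sup_U \trace(UCU^{*}A)$ and contradicting the separation.

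The principal obstacle is the third stage: evaluating $\sup_U \trace(UCU^{*} A)$ for selfadjoint trace-class $C$ paired with bounded selfadjoint $A$, where the spectrum of $A$ need not decay and both operators have mixed-sign spectra. One must split $C = C_+ - C_-$ and $A = A_+ - A_-$, expand $\trace(UCU^{*}A)$ into four cross terms, and verify via finite-rank truncation and a density argument that all four can be simultaneously optimized by a single unitary so as to produce the clean rearrangement formula above. Once that formula is in hand, the Abel-summation step becomes the standard Hardy--Littlewood--P\'olya reduction.
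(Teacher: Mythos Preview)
The paper does not prove this proposition; it is quoted verbatim from Hiai--Nakamura \cite[Theorem~3.5(4)]{HN-1991-TAMS} and used as a black box. So there is no ``paper's own proof'' to compare against, and your proposal is effectively an attempt to reprove the cited result.

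Your first two stages are fine. The third stage has a genuine gap. The separating functional $A$ produced by Hahn--Banach is an arbitrary selfadjoint element of $B(\Hil)$, not a compact operator, so the paper's notation $\lambda^{\pm}(A)$ (positive/negative parts of the eigenvalue sequence of a compact operator) does not apply. If instead you mean the min-max values $\mu_k(A) = \inf_{\dim M < k}\sup_{x\perp M,\,\|x\|=1}\langle Ax,x\rangle$, these decrease to $m_+ := \max\essspec(A)$, not to zero, and need not be nonnegative; likewise $\mu_k(-A)\searrow -m_-$ with $m_-:=\min\essspec(A)$. Your formula $\sum\lambda_k^+(C)\lambda_k^+(A)+\sum\lambda_k^-(C)\lambda_k^-(A)$ is therefore ill-defined as written, and the phrase ``monotone nonnegative sequences $\lambda^{\pm}(A)$'' is incorrect. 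Concretely, for $C$ positive and $A=-I$ any naive reading of your formula gives $0$, whereas $\sup_U\trace(UCU^*A)=-\trace C<0$.

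The argument can be repaired, but it needs the trace equality and not just submajorization. Writing $\mu_k(A)=m_+ + \alpha_k$ and $\mu_k(-A)=-m_- + \beta_k$ with $\alpha_k,\beta_k\in c_0^+$, the correct rearrangement identity becomes
\[
\sup_U\trace(UCU^*A)=m_+\trace C_+ - m_-\trace C_- + \sum_k\lambda_k^+(C)\alpha_k + \sum_k\lambda_k^-(C)\beta_k.
\]
Your Abel-summation step then handles the $\alpha$ and $\beta$ sums via $\lambda^{\pm}(X_0)\submaj\lambda^{\pm}(C)$, but the remaining term $m_+(\trace(X_0)_+-\trace C_+)-m_-(\trace(X_0)_--\trace C_-)$ equals $(m_--m_+)\bigl(\trace C_+-\trace(X_0)_+\bigr)\le 0$, and this step uses both $\trace X_0=\trace C$ and $m_-\le m_+$. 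So the majorization hypothesis (not just submajorization) is essential here, and your sketch as written misses this. You also still owe a proof of the rearrangement identity itself for general selfadjoint $A\in B(\Hil)$, which is essentially the content of the paper's later \Cref{thm:c-numerical-range-selfadjoint-formula} in the positive case and is nontrivial.
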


Before we begin the proof of the main theorem of this section, which has as a corollary that $\ocnr(A)$ is convex when $C$ is selfadjoint, we must prove a key technical lemma.
This lemma says in a rather strong way that the extreme points of $\set{ X \in \traceclass^{sa} \mid \lambda(X) \maj \lambda(C) }$ form a subset of $\orbit(C)$.

\begin{lemma}
  \label{lem:majorization-trace-zero-perturbation}
  Suppose that $X,C \in \traceclass^{sa}$ with $\lambda(X) \maj \lambda(C)$ but $X \notin \orbit(C)$.
  Then there is a nonzero projection $P$ of rank at least $2$ and an $\epsilon > 0$ such that $\lambda(X+S) \maj \lambda(C)$ for any selfadjoint $S$ with $S = PS = SP$, $\trace S = 0$ and $\norm{S} < \epsilon$.
\end{lemma}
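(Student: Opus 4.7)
The plan is to exploit $X \notin \orbit(C)$ together with $\lambda(X) \maj \lambda(C)$, using \Cref{prop:orbit-closure-equivalences} to identify non-membership in the orbit with $\lambda(X) \ne \lambda(C)$, in order to locate a position at which one of the submajorization inequalities $\lambda^{\pm}(X) \submaj \lambda^{\pm}(C)$ is strict. The rank-two projection $P$ will be built from two carefully chosen eigenvectors of $X$ so that, by Weyl, the eigenvalues of $(X+S)|_{P\Hil}$ lie within $\norm{S}$ of those of $X|_{P\Hil}$, and so that only a controlled handful of partial sums in the majorization data actually move---each by at most $\norm{S}$, and each at a position with strictly positive slack. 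Throughout, $\epsilon$ will be chosen smaller than every relevant spectral gap of $X$ so that the reordering of $\lambda^{\pm}(X+S)$ remains fully controlled, and by the symmetry $(X,C)\mapsto(-X,-C)$ I may assume the strict inequality lies in $\lambda^+$; I let $n_0$ be the smallest such position and $\eta > 0$ the slack there.

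If $\alpha := \lambda_{n_0}^+(X)$ has multiplicity $m \ge 2$ in $\lambda^+(X)$, minimality of $n_0$ forces the $\alpha$-block to occupy exactly positions $n_0,\ldots,n_0+m-1$, and I take $P$ to be the projection onto any two-dimensional subspace of the $\alpha$-eigenspace. Since $X|_{P\Hil} = \alpha I$, the eigenvalues of $(X+S)|_{P\Hil}$ are exactly $\alpha \pm \norm{S}$, so the partial sums of $\lambda^+(X+S)$ shift by $+\norm{S}$ at positions $n_0,\ldots,n_0+m-2$ and are unchanged elsewhere, while $\lambda^-(X+S) = \lambda^-(X)$. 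A short inductive argument---were any interior slack in the block to vanish, the monotonicity of $\lambda^+(C)$ would force the next slack to be negative---shows each affected slack is strictly positive, so any small enough $\epsilon$ suffices.

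If instead $\alpha$ is simple in $\lambda^+(X)$ and $\lambda_{n_0+1}^+(X) > 0$, I take $P := e_{n_0}e_{n_0}^{*} + e_{n_0+1}e_{n_0+1}^{*}$ for unit eigenvectors at the corresponding positions. Since $\mu_+ + \mu_- = \alpha + \lambda_{n_0+1}^+(X)$ for the two eigenvalues of $(X+S)|_{P\Hil}$, every partial sum of $\lambda^+(X+S)$ is unchanged except at position $n_0$, which moves by at most $\norm{S}$; both perturbed eigenvalues remain positive for small $\epsilon$, so $\lambda^-(X+S) = \lambda^-(X)$, and any $\epsilon$ less than $\eta$ and the local spectral gaps works.

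The remaining case---$\alpha$ simple and $\lambda_{n_0+1}^+(X) = 0$, so $X$ has exactly $n_0$ positive eigenvalues---is the most delicate. Trace equality together with $\eta > 0$ forces $\sum_k \lambda_k^-(C) - \sum_k \lambda_k^-(X) \ge \eta$, so the $\lambda^-$-submajorization is strict for all sufficiently large indices. I pair $e_{n_0}$ with a unit vector in $\ker X$ when $\ker X \ne 0$, and otherwise with a negative eigenvector $e_j^{-}$ at an index $j$ chosen in the tail where the $\lambda^-$-slack is uniformly positive and $\lambda_j^-(X)$ still exceeds $\epsilon$. A short case analysis on the sign of the smaller perturbed eigenvalue $\mu_-$ shows its effect on $\lambda^+(X+S)$ is bounded by $\norm{S}$ and absorbed by $\eta$, while any insertion into $\lambda^-(X+S)$ shifts a tail of that sequence's partial sums by at most $\norm{S}$, absorbed by the $\lambda^-$-slack. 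The main obstacle is exactly this reindexing between $\lambda^+$ and $\lambda^-$ when $\mu_-$ crosses zero, which is why $\epsilon$ must be chosen smaller than a finite family of slacks and spectral gaps simultaneously.
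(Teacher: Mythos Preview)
Your overall strategy matches the paper's---locate slack in the majorization, pick two eigenvectors of $X$ spanning $P$, and control the at-most-two perturbed eigenvalues via Weyl---but your case breakdown has a genuine hole.

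The missing case is $\alpha = \lambda^+_{n_0}(X) = 0$. This can happen on \emph{both} sides simultaneously, so the symmetry $(X,C)\mapsto(-X,-C)$ does not let you avoid it. Concretely, take $C=\diag(1,1,-1,-1,0,0,\ldots)$ and $X=\diag(1,-1,0,0,\ldots)$: then $\lambda(X)\maj\lambda(C)$ and $X\notin\orbit(C)$, yet for each of $\lambda^{\pm}$ the first strict inequality occurs at position $2$, where $\lambda^{\pm}_2(X)=0$. Your trichotomy then formally places this in Case~1 with $m=\infty$, but the argument there breaks twice: the ``$\alpha$-eigenspace'' is $\ker X$, which in general need not have dimension $\ge 2$; and even when it does, the perturbed eigenvalues are $\pm\norm{S}$, so one of them is negative and your assertion $\lambda^-(X+S)=\lambda^-(X)$ is false. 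This is precisely the regime $\trace X_+<\trace C_+$, which the paper isolates and treats separately (its Cases~2 and~3), using either two kernel vectors or two eigenvectors drawn from whichever of $X_{\pm}$ has infinite rank---constructions not covered by any of your three cases.

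A smaller issue: in your Case~2, the claim that ``every partial sum is unchanged except at position $n_0$'' is false when $\lambda^+_{n_0+1}(X)$ has multiplicity $\ge 2$. If $\mu_-<\lambda^+_{n_0+1}(X)$ then $\mu_-$ slides past the entire repeated block, and the partial sums at positions $n_0,\ldots,n_0+k-1$ (where $k$ is that multiplicity) all shift by $\mu_+-\alpha$. The conclusion survives---the same inductive argument you sketched for Case~1 shows each of those slacks is strictly positive---but the statement as written is wrong.
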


\begin{proof}
  Suppose that $X,C \in \traceclass^{sa}$ with $\lambda(X) \maj \lambda(C)$ but $X \notin \orbit(C)$.
  There are two distinct cases, when $\trace X_+ = \trace C_+$ (necessitating $\trace X_- = \trace C_-$) and when $\trace X_+ < \trace C_+$ (necessitating $\trace X_- < \trace C_-$).

  \begin{case}{$\trace X_+ = \trace C_+$.}
    Since $X \notin \orbit(C)$, we have $\lambda(X) \not= \lambda(C)$ by \Cref{prop:orbit-closure-equivalences}, and hence either $\lambda^+(X) \not= \lambda^+(C)$ or $\lambda^-(X) \not= \lambda^-(C)$.
    Without loss of generality we may assume the former.
    So, in this case it suffices to prove the result when $X,C \in \traceclass^+$ and $s(X) \maj s(C)$ but $s(X) \not= s(C)$ since $s(X) = \lambda(X) = \lambda^+(X)$ for \emph{positive} compact operators.

    Let $n \in \mathbb{N}$ be the first index for which the sequences $s(X), s(C)$ differ.
    Necessarily $s_n(X) < s_n(C)$.
    Moreover, $s_{n+1}(X) > 0$ since
    \begin{equation*}
      \sum_{j=1}^n s_j(X) < \sum_{j=1}^n s_j(C) \quad\text{necessitates}\quad \sum_{j=n+1}^\infty s_j(X) > \sum_{j=n+1}^\infty s_j(C).
    \end{equation*}
    Let $m \ge n+1$ be the first index such that $s_{m+1}(X) < s_{n+1}(X)$, and hence $s_{n+1}(X) = s_{n+2}(X) = \cdots = s_m(X)$.
    Such an index $m$ occurs because $s(X) \in \czstar$ and $s_{n+1}(X) > 0$.
    Let $\delta_k := \sum_{j = 1}^k \big( s_j(C) - s_j(X) \big)$ and note that $\delta_k \ge 0$ for all $k \in \nats$ since $s(X) \maj s(C)$.
    Also $\delta_k = 0$ for $1 \le k < n$, and $\delta_n = s_n(C) - s_n(X) > 0$.
    Moreover, for $n < k \le m$, $s_k(X)$ is constant ($=s_{n+1}(X)$) and therefore on the interval $n \le k \le m$, $\delta_k$ is increasing (as long as $s_k(C) \ge s_{n+1}(X)$) and then (maybe) strictly decreasing (if/once $s_k(C) < s_{n+1}(X)$).
    Consequently, $\delta_{m-1} > 0$, as it is either greater than or equal to $\delta_n > 0$ or strictly greater than $\delta_m \ge 0$.
    Furthermore, $\min_{n \le k < m} \delta_k = \min \set{\delta_n, \delta_{m-1}} > 0$.

    Set $\epsilon := \min \set{ \delta_n, \delta_{m-1}, s_m(X)-s_{m+1}(X) }$.
    Note that
    \begin{equation*}
      \delta_n = s_n(C) - s_n(X) \le s_{n-1}(C) - s_n(X) = s_{n-1}(X) - s_n(X).
    \end{equation*}
    Let $\{e_j\}_{j=1}^{\infty}$ be an orthonormal set of eigenvectors for $X$ corresponding to the eigenvalues in the sequence $s(X)$.
    Let $P$ be the projection onto $\spans\{e_n,e_m\}$.
    Let $S$ be any selfadjoint operator with $S = PS = SP$, $\trace(S) = 0$, and $\norm{S} < \epsilon$.
    Then because $S = PS = SP$, if $m \not= j \not= n$, then $Se_j = SPe_j = 0$, and hence $(X+S)e_j = s_j(X) e_j$.
    Moreover, because $\norm{S} < \epsilon$ and $\trace(S) = 0$, $(X+S)f_n = (s_n(X) + \eta) f_n$ and $(X+S)f_m = (s_m(X) - \eta) f_m$, for some $\abs{\eta} \le \norm{S} < \epsilon$ and orthonormal vectors $f_n,f_m$ with $\spans \set{f_n,f_m} = \spans \set{e_n,e_m}$.

    We will establish $s(X+S) \maj s(C)$, and we deal with the case when $\eta \ge 0$ first because it implies the case when $\eta \le 0$.
    Notice that
    \begin{equation*}
      s_{n+1}(X) \le s_n(X) + \eta < s_n(X) + \big(s_{n-1}(X) - s_n(X)\big) = s_{n-1}(X),
    \end{equation*}
    and also
    \begin{equation*}
      s_{m-1}(X) \ge s_m(X) - \eta > s_m(X) - \big(s_m(X) - s_{m+1}(X)\big) = s_{m+1}(X).
    \end{equation*}
    Therefore the order of the singular values is preserved between $X$ and $X+S$;
    in particular, $s_n(X+S) = s_n(X) + \eta$ and $s_m(X+S) = s_m(X) - \eta$ and $s_j(X+S) = s_j(X)$ for all $n \not= j \not= m$.
    Thus to ensure $s(X+S) \maj s(C)$ we only need to check the partial sums for indices $n \le k < m$, because for all other values of $k$, $\sum_{j=1}^k s(X+S) = \sum_{j=1}^k s(X)$ and $s(X) \maj s(C)$.

    So for any $n \le k < m$, we have
    \begin{align*}
      \sum_{j=1}^k s_j(X+S) &= \sum_{j=1}^{n-1} s_j(X+S) + s_n(X+S) + \sum_{j=n+1}^k s_j(X+S) \\
                            &= \sum_{j=1}^{n-1} s_j(X) + (s_n(X) + \eta) + \sum_{j=n+1}^k s_j(X) \\
                            &= \sum_{j=1}^{n-1} s_j(C) + (s_n(C) + \eta) + \sum_{j=n+1}^k s_j(C) - \delta_k \\
                            &\le \sum_{j=1}^k s_j(C).
    \end{align*}
    where the last line follows because $\eta - \delta_k \le \epsilon - \min \set{\delta_n,\delta_{m-1}} \le 0$.
    Thus $s(X+S) \maj s(C)$.

    Now suppose $\eta \le 0$.
    In this case it is clear that the sequence with $\eta$ is majorized by the same sequence with $\abs{\eta}$.
    Indeed, this is due to a fundamental fact about majorization: given a decreasing nonnegative sequence $(d_j)_{j=1}^{\infty}$, if $n < m$ and $d_n > d_m$, and we consider the sequence $(d'_j)_{j=1}^{\infty}$ which is equal to the original sequence except that $d'_n = d_n - \epsilon$ and $d'_m = d_m + \epsilon$ for some $0 \le \epsilon \le d_n - d_m$, then $(d'_j)_{j=1}^{\infty} \prec (d_j)_{j=1}^{\infty}$, and this happens even if the decreasing order is no longer preserved for the sequence $(d'_j)_{j=1}^{\infty}$.
    In our case, for $\eta \le 0$, we are using $d_n = s_n(X) - \eta$, $d_m = s_n(X) + \eta$ and $\epsilon = 2\abs{\eta}$.
    Therefore we still obtain $s(X+S) \maj s(C)$ even for $\eta \le 0$.
  \end{case}

  \begin{case}{$\trace X_+ < \trace C_+$ and both $X_{\pm}$ are finite rank.}
    Since $\trace X = \trace C$, we must also have $\trace X_- < \trace C_-$.
    If $X_{\pm}$ are finite rank with ranks $n_{\pm}$, then $X$ has two orthonormal eigenvectors $e_{\pm}$ corresponding to the eigenvalue zero.
    Set $\eta_{\pm} := \sum_{n=1}^{n_{\pm}} \big( \lambda^{\pm}_n(C) - \lambda^{\pm}_n(X) \big) + \lambda^{\pm}_{n_{\pm}+1}(C) > 0$ (if either $\eta_{\pm}$ were zero, it would imply both $\trace X_{\pm} = \trace C_{\pm}$).
    Then set $\epsilon := \min \set{\eta_{\pm}, \lambda^{\pm}_{n_{\pm}}(X)}$ and let $P$ be the projection onto $\spans\set{e_+,e_-}$.

    Let $S$ be any selfadjoint operator for which $SP = PS = S$ and $\norm{S} < \epsilon$ and $\trace S = 0$.
    Adding $S$ to $X$ produces two new eigenvalues smaller in modulus than the rest.
    That is, for $1 \le n \le n_{\pm}$, $\lambda^{\pm}_n(X+S) = \lambda^{\pm}_n(X) = \lambda^{\pm}_n(C)$ and $\lambda^{\pm}_{n_{\pm} + 1}(X+S) = \norm{S}$, and $\lambda^{\pm}_n(X+S) = 0$ for all $n > n_{\pm} + 1$.
    Therefore, to see that $\lambda^{\pm}(X+S) \submaj \lambda^{\pm}(C)$, it suffices to check the partial sums for the indices $n_{\pm}+1$.
    Thus,
    \begin{equation*}
      \sum_{n=1}^{n_{\pm} + 1} \big( \lambda^{\pm}_n(C) - \lambda^{\pm}_n(X+S) \big) = \sum_{n=1}^{n_{\pm}} \big( \lambda^{\pm}_n(C) - \lambda^{\pm}_n(X) \big) + \lambda^{\pm}_{n_{\pm} + 1}(C) - \norm{S} \ge \eta_{\pm} - \epsilon \ge 0.
    \end{equation*}
    Finally, since $\trace(X+S) = \trace X + \trace S = \trace C$, we obtain $\lambda^{\pm}(X+S) \maj \lambda^{\pm}(C)$.
  \end{case}

  \begin{case}{$\trace X_+ < \trace C_+$ and one of $X_{\pm}$ is infinite rank.}
    Again $\trace X = \trace C$, we must also have $\trace X_- < \trace C_-$.
    By symmetry, we may assume without loss of generality that $X_+$ has infinite rank.
    Then set $\gamma := \frac{1}{2}\trace (C_+ - X_+) > 0$ and $n_+ \in \nats$ such that for all $N \ge n_+$ we have $\sum_{n=1}^N \big( \lambda^+_n(C) - \lambda^+_n(X) \big) \ge \gamma$.
    Moreover, since $X_+$ is infinite rank, we may select $r > m \ge n_+$ such that $\lambda^+_{m-1}(X) > \lambda^+_m(X) \ge \lambda^+_r(X) > \lambda^+_{r+1}(X)$.
    Set $\epsilon := \min\set{\gamma,\lambda^+_{m-1}(X) - \lambda^+_m(X), \lambda^+_r(X) - \lambda^+_{r+1}(X)}$ and $P$ the projection onto $\spans\set{e_m,e_r}$.

    Let $S$ be a selfadjoint operator for which $SP = PS = S$ and $\norm{S} < \epsilon$ and $\trace S = 0$.
    As with Case 1, $\lambda^+_n(X+S) = \lambda^+_n(X)$ for all $m \not= n \not= r$, and $\lambda^+_m(X+S) = \lambda^+_m(X) + \eta$ and $\lambda^+_r(X+S) = \lambda^+_r(X) - \eta$ for some $0 \le \eta \le \norm{S} < \epsilon$ (the situation when $\eta \le 0$ is handled in the same manner as Case 1).
    Of course, $\lambda^-(X+S) = \lambda^-(X)$.
    To verify that $\lambda^+(X+S) \submaj \lambda^+(C)$, it suffices to check the partial sums for indices $m \le k < r$.
    We obtain
    \begin{align*}
      \sum_{j=1}^k \lambda^+_j(X+S) &= \sum_{j=1}^{m-1} \lambda^+_j(X+S) +  \lambda^+_m(X+S) + \sum_{j=m+1}^k \lambda^+_j(X+S) \\
                                    &= \sum_{j=1}^k \lambda^+_j(X) + \eta \\
                                    &\le \sum_{j=1}^k \lambda^+_j(C) - \gamma + \eta \\
                                    &\le \sum_{j=1}^k \lambda^+_j(C), 
    \end{align*}
    where the last line follows since $\eta \le \norm{S} < \epsilon \le \gamma$.
    Thus $\lambda^{\pm}(X+S) \submaj \lambda^{\pm}(C)$ and $\trace(X+S) = \trace X = \trace C$, so $\lambda^{\pm}(X+S) \maj \lambda^{\pm}(C)$.
    \qedhere
  \end{case}
\end{proof}

We now have the tools necessary (\Cref{prop:convex-orbit-majorization} and \Cref{lem:majorization-trace-zero-perturbation}) to prove our main theorem.
The proof is adapted from and follows closely the one given by Dykema and Skoufranis \cite[Theorem~2.14]{DS-2017-PEMSIS} for numerical ranges in II$_1$ factors, but there is one substantial difference.
A key step in the proof is obtaining an extreme point of a certain closed convex subset of $\{ X \in \traceclass^{sa} \mid \lambda(X) \maj \lambda(C) \}$.
In the context of II$_1$ factors (or in any finite factor), this set happens to be weak* compact and so Dykema and Skoufranis are able to employ the Krein--Milman Theorem.
However, in $B(\Hil)$, this set is definitely not weak* compact since it contains elements of arbitrarily small norm and therefore the zero operator is in the weak* closure.
Instead, $\{ X \in \traceclass^{sa} \mid \lambda(X) \maj \lambda(C) \}$ is only a trace-norm closed and bounded convex set, and so the Krein--Milman Theorem cannot be invoked.
In order to circumvent this issue, we use the Radon--Nikodym Property of the Banach space of trace-class operators to obtain the desired extreme point.

\begin{theorem}
  \label{thm:c-numerical-range-via-majorization}
  For a selfadjoint trace-class operator $C \in \traceclass^{sa}$ and any $A \in B(\Hil)$,
  \begin{equation*}
    \ocnr(A) = \{ \trace(XA) \mid X \in \traceclass^{sa}, \lambda(X) \maj \lambda(C) \}.
  \end{equation*}
\end{theorem}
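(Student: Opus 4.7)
The plan is to prove the easy inclusion $\ocnr(A) \subseteq \{\trace(XA) \mid X \in \traceclass^{sa},\ \lambda(X) \maj \lambda(C)\}$ directly from \Cref{prop:orbit-closure-equivalences}: any $X \in \orbit(C)$ is a selfadjoint trace-class operator with $\lambda(X) = \lambda(C)$, which trivially majorizes itself. The substance is the reverse containment. Fix a selfadjoint trace-class $X_0$ with $\lambda(X_0) \maj \lambda(C)$, set $v := \trace(X_0 A)$, and consider
\begin{equation*}
  K_v := \{ Y \in \traceclass^{sa} \mid \lambda(Y) \maj \lambda(C),\ \trace(YA) = v \}.
\end{equation*}
This set is nonempty (it contains $X_0$), convex (the majorization set is convex by \Cref{prop:convex-orbit-majorization}, and the trace condition is affine), trace-norm closed (both defining conditions are), and trace-norm bounded by $\norm{C}_1$ (since majorization forces $\norm{Y}_1 \le \norm{C}_1$).

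Following the strategy of Dykema and Skoufranis, the aim is to extract an extreme point $E \in K_v$ and argue that every such extreme point lies in $\orbit(C)$; then $v = \trace(EA) \in \ocnr(A)$, finishing the proof. To produce $E$ in the infinite-dimensional setting where the ambient set is not weak* compact, I would invoke the Radon--Nikodym Property of $\traceclass$: because $\Hil$ is separable, $\traceclass = \K^*$ is a separable dual space, and so every nonempty closed bounded convex subset of $\traceclass$ is the trace-norm closed convex hull of its extreme points. In particular $K_v$ admits at least one extreme point. This RNP step is precisely what replaces the Krein--Milman argument that Dykema--Skoufranis use in $\mathrm{II}_1$ factors.

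The main obstacle is showing that any extreme point $E$ of $K_v$ must satisfy $E \in \orbit(C)$; this is where \Cref{lem:majorization-trace-zero-perturbation} does the heavy lifting. Assume for contradiction $E \notin \orbit(C)$. Because $\lambda(E) \maj \lambda(C)$ and $E \notin \orbit(C)$, that lemma provides a projection $P$ of rank at least $2$ and an $\epsilon > 0$ such that every selfadjoint $S$ with $S = PS = SP$, $\trace S = 0$, and $\norm{S} < \epsilon$ satisfies $\lambda(E+S) \maj \lambda(C)$. The real vector space of selfadjoint operators of the form $S = PSP$ has real dimension at least $4$ (even when $\rank P = 2$), while imposing $\trace S = 0$ (one real equation) and $\trace(SA) = 0$ (two real equations, since $\trace(SA)$ is a priori complex) cuts out at most $3$ real-linear constraints. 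Hence there is a nonzero selfadjoint $S$ with $S = PSP$, $\trace S = 0$, and $\trace(SA) = 0$; after scaling, we may also assume $\norm{S} < \epsilon$. Then $E \pm S$ both belong to $K_v$, yet $E = \tfrac{1}{2}\bigl((E+S) + (E-S)\bigr)$, contradicting extremality of $E$. Therefore $E \in \orbit(C)$ and $v \in \ocnr(A)$, completing the proof. The delicate points I expect to need care are the RNP invocation (making sure $K_v$ really is a closed, bounded, convex subset of the separable dual $\traceclass$) and the dimension count ensuring the symmetric perturbation $S$ exists, which is the lynchpin of the extreme-point argument.
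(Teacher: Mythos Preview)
Your proposal is correct and follows essentially the same approach as the paper: both invoke the Radon--Nikodym Property of $\traceclass$ to extract an extreme point of the fiber $K_v$, then use \Cref{lem:majorization-trace-zero-perturbation} together with a dimension count to show that extreme point must lie in $\orbit(C)$. Your dimension count (real dimension $\ge 4$ for selfadjoint $2\times 2$ blocks, minus at most $3$ real constraints) is in fact stated a bit more carefully than the paper's, and your justification of the RNP via ``separable dual'' is a valid alternative to the paper's citation of \cite{Chu-1981-JLMSIS}.
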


\begin{proof}
  Given $X \in \traceclass^{sa}$ with $\lambda(X) \maj \lambda(C)$ we will show there is some $Y \in \orbit(C)$ for which $\trace(XA) = \trace(YA)$.
  For this consider the trace-norm continuous map $\Phi : Z \mapsto \trace(ZA)$ from $\{ Z \in \traceclass^{sa} \mid \lambda(Z) \maj \lambda(C) \}$ to $\complex$.
  Then by \Cref{prop:convex-orbit-majorization} and continuity and linearity of $\Phi$, the set
  \begin{equation*}
    \Phi^{-1}(\trace(XA)) = \{ Z \in \traceclass^{sa} \mid \lambda(Z) \maj \lambda(C), \trace(ZA) = \trace (XA) \}
  \end{equation*}
  is a nonempty, convex, trace-norm closed and bounded set.
  The trace-class operators with the trace-norm form a Banach space, and moreover, this space has the Radon--Nikodym Property \cite[Lemma~2]{Chu-1981-JLMSIS}.
  It is well-known (due to Lindenstrauss \cite[Theorem~2]{Phe-1974-JFA}) that the Radon--Nikodym Property implies the Krein--Milman Property: every convex, closed and bounded set is the closed convex hull of its extreme points.
  In particular, $\Phi^{-1}(\trace(XA))$ has an extreme point, which we label $Y$.

  We claim that $Y \in \orbit(C)$.
  Suppose not.
  Then we may apply \autoref{lem:majorization-trace-zero-perturbation} to obtain a nonzero projection $P$ as in that lemma.
  Consider the real vector space $\mathcal{S}_P := \{ S \in B(\Hil) \mid S= S^{*}, S = SP = PS, \trace S = 0 \}$ and the linear map $S \mapsto \trace(SA)$.
  Note that $\mathcal{S}_P$ has dimension at least two since $P$ must have rank at least two, and therefore this linear map has a nonzero element in the kernel.
  By scaling we obtain an $S \in \mathcal{S}_P$ in the kernel of this map for which $\lambda(Y \pm S) \maj \lambda(C)$ by \autoref{lem:majorization-trace-zero-perturbation}.
  Thus $\trace((Y\pm S)A) = \trace(YA) = \trace(XA)$, and therefore $Y \pm S \in \Phi^{-1}(\trace(XA))$, and hence
  \begin{equation*}
    Y = \frac{Y+S}{2} + \frac{Y-S}{2},
  \end{equation*}
  contradicting the fact that $Y$ is extreme in $\Phi^{-1}(\trace(XA))$.
  Thus $Y \in \orbit(C)$.

  Therefore $\{ \trace(XA) \mid X \in \traceclass^{sa}, \lambda(X) \maj \lambda(C) \} \subseteq \ocnr(A)$, and the other inclusion follows since $\orbit(C) \subseteq \{ X \in \traceclass^{sa} \mid \lambda(X) \maj \lambda(C) \}$.
\end{proof}

Since the collection $\{ X \in \traceclass^{sa} \mid \lambda(X) \maj \lambda(C) \}$ is convex (e.g., by \Cref{prop:convex-orbit-majorization}, but this can also be proven directly rather easily) and the map $X \mapsto \trace(XA)$ is linear, it is clear that $\ocnr(A)$ is a convex set.
This generalizes \cite{Wes-1975-LMA} and is, to our knowledge, the only independent proof of this result when the underlying Hilbert space is infinite dimensional.

\begin{corollary}
  \label{cor:c-numerical-range-convex}
  If $C \in \traceclass^{sa}$, then $\ocnr(A)$ is convex.
\end{corollary}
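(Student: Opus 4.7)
The plan is to deduce convexity immediately from \Cref{thm:c-numerical-range-via-majorization}. By that theorem,
\begin{equation*}
  \ocnr(A) = \{ \trace(XA) \mid X \in \traceclass^{sa},\ \lambda(X) \maj \lambda(C) \},
\end{equation*}
so $\ocnr(A)$ is the image of the set $\mathcal{M}_C := \{ X \in \traceclass^{sa} \mid \lambda(X) \maj \lambda(C) \}$ under the $\reals$-linear map $\Phi \colon \traceclass^{sa} \to \complex$ given by $\Phi(X) = \trace(XA)$. Since linear maps preserve convexity, everything reduces to verifying that $\mathcal{M}_C$ is convex.

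For the convexity of $\mathcal{M}_C$, my first option is to appeal directly to \Cref{prop:convex-orbit-majorization}, which identifies $\mathcal{M}_C$ with $\closure[\norm{\bigcdot}_1]{\conv \orbit(C)}$; the trace-norm closure of a convex set in the Banach space $\traceclass$ is convex, so we are done. If one prefers a self-contained argument, one can check convexity by hand: given $X, Y \in \mathcal{M}_C$ and $t \in [0,1]$, set $Z := tX + (1-t)Y$ and observe that Ky Fan's inequality for partial sums of positive/negative eigenvalues gives
\begin{equation*}
  \sum_{k=1}^n \lambda^{\pm}_k(Z) \le t \sum_{k=1}^n \lambda^{\pm}_k(X) + (1-t) \sum_{k=1}^n \lambda^{\pm}_k(Y) \le \sum_{k=1}^n \lambda^{\pm}_k(C),
\end{equation*}
which yields $\lambda^{\pm}(Z) \submaj \lambda^{\pm}(C)$, while linearity of the trace gives $\trace Z = t\trace X + (1-t) \trace Y = \trace C$; together these mean $\lambda(Z) \maj \lambda(C)$.

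There is essentially no obstacle here: the entire weight of the corollary is carried by \Cref{thm:c-numerical-range-via-majorization} (whose proof via \Cref{lem:majorization-trace-zero-perturbation} and the Radon--Nikodym Property is the novel contribution) together with \Cref{prop:convex-orbit-majorization}. Once the majorization characterization of $\ocnr(A)$ is in hand, convexity is a one-line formal consequence, and in particular no Toeplitz--Hausdorff-style geometric manipulation or Morse-theoretic argument à la Westwick is needed.
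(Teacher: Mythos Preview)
Your proof is correct and matches the paper's approach essentially verbatim: the paper also derives convexity as an immediate consequence of \Cref{thm:c-numerical-range-via-majorization}, noting that $\set{ X \in \traceclass^{sa} \mid \lambda(X) \maj \lambda(C) }$ is convex (citing \Cref{prop:convex-orbit-majorization}, while remarking it can also be proven directly) and that $X \mapsto \trace(XA)$ is linear. Your optional direct Ky Fan argument is exactly the kind of ``proven directly rather easily'' route the paper alludes to but does not spell out.
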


We remark that combining \Cref{cor:c-numerical-range-convex} with \Cref{thm:cnr-dense-in-ocnr} yields an independent proof of \cite[Theorem~3.8]{DvE-2020-LaMA} that $\closure{\cnr(A)}$ is convex under the stated hypothesis that $C \in \traceclass^{sa}$.

In addition, \Cref{thm:c-numerical-range-via-majorization} has as a direct corollary the following inclusion relationship among orbit-closed $C$-numerical ranges.
This extends \cite[Theorem~7]{GS-1977-LAA} to the infinite dimensional setting.

\begin{corollary}
  \label{cor:majorization-c-numerical-range-inclusion}
  If $C,C' \in \traceclass^{sa}$ and $\lambda(C') \maj \lambda(C)$, then $\ocnr[C'](A) \subseteq \ocnr(A)$.
\end{corollary}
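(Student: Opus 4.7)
The plan is to deduce this inclusion directly from the majorization characterization of the orbit-closed $C$-numerical range established in \Cref{thm:c-numerical-range-via-majorization}, using transitivity of the majorization relation.

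More concretely, I would first rewrite both sides using \Cref{thm:c-numerical-range-via-majorization}. That theorem gives
\begin{equation*}
  \ocnr[C'](A) = \set{ \trace(XA) \mid X \in \traceclass^{sa},\ \lambda(X) \maj \lambda(C') }
\end{equation*}
and
\begin{equation*}
  \ocnr(A) = \set{ \trace(XA) \mid X \in \traceclass^{sa},\ \lambda(X) \maj \lambda(C) }.
\end{equation*}
So the desired inclusion reduces to the set-theoretic assertion that every $X \in \traceclass^{sa}$ with $\lambda(X) \maj \lambda(C')$ also satisfies $\lambda(X) \maj \lambda(C)$; in other words, the transitivity of $\maj$.

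The second step is to verify transitivity of $\maj$ on real-valued $\ell^1$ sequences as defined in \Cref{def:sequence-majorization}. Suppose $\lambda(X) \maj \lambda(C')$ and $\lambda(C') \maj \lambda(C)$. By definition this means $\lambda^+(X) \submaj \lambda^+(C')$ and $\lambda^+(C') \submaj \lambda^+(C)$, and likewise for the negative parts, together with equality of total sums $\sum \lambda(X) = \sum \lambda(C') = \sum \lambda(C)$. Transitivity of $\submaj$ on nonnegative sequences is immediate from the definition (comparing partial sums of the nonincreasing rearrangements), and chaining the total-sum equalities gives $\sum \lambda(X) = \sum \lambda(C)$, so $\lambda(X) \maj \lambda(C)$.

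Combining these two steps, any $\trace(XA) \in \ocnr[C'](A)$ comes from some $X \in \traceclass^{sa}$ with $\lambda(X) \maj \lambda(C') \maj \lambda(C)$, hence $\lambda(X) \maj \lambda(C)$, hence $\trace(XA) \in \ocnr(A)$. I do not expect any serious obstacle here: the majorization characterization in \Cref{thm:c-numerical-range-via-majorization} does all the heavy lifting, and what remains is the purely combinatorial transitivity of $\maj$, which is routine from the partial-sum definition.
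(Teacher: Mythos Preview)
Your proposal is correct and matches the paper's own approach: the paper presents this as a direct corollary of \Cref{thm:c-numerical-range-via-majorization} without further proof, relying implicitly on exactly the transitivity of $\maj$ that you spell out.
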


\section{Boundary points}
\label{sec:boundary-points}

In the study of numerical ranges, it is often of interest to investigate the boundary.
We now determine some conditions under which points on the boundary $\partial\ocnr(A)$ actually belong to $\ocnr(A)$.
In general, this is a nontrivial question, but in this section we try to provide adequate answers.

\subsection{Compact operators}
\label{subsec:compact-operators}

We begin with the case when $A \in \K$ is a compact operator.
For this we have a very satisfying set of conditions in \Cref{thm:compact-closed-iff-contains-weak-majorization} equivalent to $\ocnr(A)$ being closed, and a simpler sufficient (but not necessary) condition in \Cref{cor:compact-closure-direct-sum-zero}.

In \Cref{thm:c-numerical-range-via-majorization}, we saw that the orbit-closed $C$-numerical range was the image of the operators whose eigenvalue sequences are majorized by $\lambda(C)$ under the map $X \mapsto \trace(XA)$.
A natural question to ask is whether or not there is a similar characterization for the image of those operators which are only submajorized by $C$.
The following lemma proves that this is indeed the case.

\begin{lemma}
  \label{lem:weak-majorization-c-numerical-range}
  For a selfadjoint trace-class operator $C \in \traceclass^{sa}$ and an operator $A \in B(\Hil)$,
  \begin{equation*}
    \set{ \trace(XA) \mid X \in \traceclass^{sa}, \lambda(X) \submaj \lambda(C) } = \conv \ \bigcup_{\mathclap{\qquad 0 \le m_{\pm} \le \rank C_{\pm}}} \  \ocnr[C_{m_-,m_+}](A),
  \end{equation*}
  where $C_{m_-,m_+}$ is the operator $C (P^-_{m_-} + P^+_{m_+})$ where $\trace P^{\pm}_{m_{\pm}} = m_{\pm}$, and for some $\lambda_- \le 0 \le \lambda_+$, $\chi_{(-\infty,\lambda_-)}(C) \le P^-_{m_-} \le \chi_{(-\infty,\lambda_-]}(C)$ and $\chi_{(\lambda_+,\infty)}(C) \le P^+_{m_+} \le \chi_{[\lambda_+,\infty)}(C)$.

  In other words, $C_{m_-,m_+}$ is the selfadjoint operator whose eigenvalues are the smallest $m_-$ negative eigenvalues $C$ along with the largest $m_+$ positive eigenvalues of $C$, namely $-\lambda^-_1(C),\ldots,-\lambda^-_{m_-}(C)$ and $\lambda^+_1,(C),\ldots,\lambda^+_{m_+}(C)$, along with the eigenvalue $0$ repeated with multiplicity $\trace(I-P_{m_-}^- - P_{m_+}^+)$.
\end{lemma}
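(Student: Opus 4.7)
The plan is to prove the two inclusions separately. For $\supseteq$, observe that each $C_{m_-,m_+}$ trivially satisfies $\lambda(C_{m_-,m_+}) \submaj \lambda(C)$ because its positive and negative eigenvalue sequences are truncations of those of $C$. Combining \Cref{thm:c-numerical-range-via-majorization} with transitivity of submajorization yields $\ocnr[C_{m_-,m_+}](A) \subseteq \set{\trace(XA) \mid X \in \traceclass^{sa}, \lambda(X) \submaj \lambda(C)}$. Since $\set{X \in \traceclass^{sa} : \lambda(X) \submaj \lambda(C)}$ is convex by \Cref{prop:wot-closure-convex-orbit-weak-majorization} (the submajorization bound on eigenvalues forces trace-class containment), its image under the linear functional $\trace(\bigcdot\, A)$ is convex, and it contains the convex hull of the union.

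For the reverse inclusion, given $X \in \traceclass^{sa}$ with $\lambda(X) \submaj \lambda(C)$, set $p := \trace X_+$ and $q := \trace X_-$. Because $0 \le p \le \trace C_+$ and $0 \le q \le \trace C_-$, one may choose $m_\pm$ and $r_\pm \in [0,1]$ so that $p = \sum_{i=1}^{m_+ - 1}\lambda_i^+(C) + r_+\lambda_{m_+}^+(C)$ and analogously for $q$. Construct $C'$ to be diagonal in an eigenbasis $\set{e_j}$ of $C$, with diagonal entries $\lambda_1^+(C),\ldots,\lambda_{m_+-1}^+(C),r_+\lambda_{m_+}^+(C)$ on the positive side and the dual choices on the negative side. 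A direct partial-sum check shows $\lambda^\pm(X) \submaj \lambda^\pm(C')$ with $\trace X_\pm = \trace C'_\pm$, so $\lambda(X) \maj \lambda(C')$, and \Cref{thm:c-numerical-range-via-majorization} gives $\trace(XA) = \trace(YA)$ for some $Y \in \orbit(C')$.

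The crux of the argument is the explicit convex decomposition of $C'$ in its diagonal basis,
\begin{equation*}
  C' = r_+ r_- C_{m_-,m_+} + r_+(1-r_-) C_{m_- - 1, m_+} + (1-r_+) r_- C_{m_-, m_+ - 1} + (1-r_+)(1-r_-) C_{m_- - 1, m_+ - 1},
\end{equation*}
whose weights $s_i$ sum to one. To lift this decomposition to $Y$, I will invoke \Cref{prop:orbit-closure-equivalences}: $Y$ is compact normal with $\lambda(Y) = \lambda(C')$, so $Y$ admits an orthonormal eigenbasis $\set{f_j}$ of its non-kernel whose eigenvalues can be enumerated in the same order as the diagonal of $C'$ against $\set{e_j}$. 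Defining each $Y^{(i)}$ to act diagonally on $\set{f_j}$ with the same diagonal entries that the corresponding $C^{(i)}$ has on $\set{e_j}$ (and vanishing on $\ker Y$) gives $\lambda(Y^{(i)}) = \lambda(C^{(i)})$, hence $Y^{(i)} \in \orbit(C^{(i)})$; moreover $Y = \sum_i s_i Y^{(i)}$ by construction. Taking trace against $A$ produces $\trace(YA) = \sum_i s_i \trace(Y^{(i)}A) \in \conv \bigcup_{m_\pm} \ocnr[C_{m_-,m_+}](A)$, as desired.

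The main obstacle I expect is the corner case $p = \trace C_+$ with $\rank C_+ = \infty$ (and its symmetric counterpart for $q$), where no finite $m_+$ and $r_+ \in [0,1]$ satisfy the equation for $p$; I will handle this by interpreting $m_+ = \rank C_+$, dropping the $r_+$ factor, and collapsing the four-term decomposition to two terms (or one term, if both $p$ and $q$ saturate). A secondary technicality is verifying $\lambda(Y^{(i)}) = \lambda(C^{(i)})$ as multisets: this reduces to the observation that each $C^{(i)}$ is obtained from $C$ merely by zeroing certain diagonal entries without changing signs, so $\mu_j^{(i)} \ne 0$ forces $\mu_j \ne 0$, and the reorderings of the nonzero entries by decreasing modulus therefore coincide.
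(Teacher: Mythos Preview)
Your argument is correct and matches the paper's proof essentially line for line: both directions are handled the same way, with the key step being the construction of $C'$ as the bilinear interpolation of the four $C_{m_{\pm}-1,m_{\pm}}$ so that $\lambda(X)\maj\lambda(C')$, followed by lifting that convex decomposition from $C'$ to an arbitrary $Y\in\orbit(C')$ via \Cref{prop:orbit-closure-equivalences}. Your explicit eigenbasis transfer and your treatment of the saturated case $p=\trace C_+$ with $\rank C_+=\infty$ are exactly the paper's $\diag(\lambda(C'))\oplus\zop$ argument and its convention ``$\infty=\infty-1$'' spelled out in slightly different language.
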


\begin{proof}
  Notice that the set $\set{ X \in \traceclass^{sa} \mid \lambda(X) \submaj \lambda(C) }$ is convex (e.g., by \Cref{prop:wot-closure-convex-orbit-weak-majorization}, but this can also be proven directly) and the trace is a linear functional, hence the set $\set{ \trace(XA) \mid X \in \traceclass^{sa}, \lambda(X) \submaj \lambda(C) }$ is convex.
  Moreover, any $X \in \orbit(C_{m_-,m_+})$ satisfies $\lambda(X)  = \lambda(C_{m_-,m_+}) \submaj \lambda(C)$.
  Therefore the right-hand set is included in the left-hand set.

  For the other inclusion, take any $X \in \traceclass^{sa}$ with $\lambda(X) \submaj \lambda(C)$.
  If $\trace(X_+) = \trace(C_+)$, set $m_+ = \rank C_+$, and if $\trace(X_-) = \trace(C_-)$, set $m_- = \rank C_-$ (we allow for $m_{\pm} = \infty$).
  Otherwise, let $m_{\pm} \in \nats$ be the smallest (and unique) positive integers for which
  \begin{equation*}
    \sum_{n=1}^{m_{\pm}-1} \lambda^{\pm}_n(C) \le \trace(X_{\pm}) < \sum_{n=1}^{m_{\pm}} \lambda^{\pm}_n(C).
  \end{equation*}
  Then there are $t_{\pm} \in [0,1]$ for which
  \begin{equation*}
    \trace(X_{\pm}) = \sum_{n=1}^{m_{\pm} - 1} \lambda^{\pm}_n(C) + t_{\pm} \lambda^{\pm}_{m_{\pm}}(C).
  \end{equation*}
  Then consider the operator $C'$ which is the convex combination
  \begin{equation*}
    (1-t_-)(1-t_+) C_{m_- - 1,m_+ - 1} + (1-t_-)t_+ C_{m_- - 1,m_+} + t_-(1-t_+) C_{m_- ,m_+ - 1} + t_- t_+ C_{m_-,m_+}.
  \end{equation*}
  Here, for convenience, we simply adopt the convention that $\infty = \infty - 1$ in case either of $m_{\pm}$ is infinite.
  Therefore, the nonzero eigenvalues of $C'$ are $\lambda^{\pm}_1(C),\ldots,\lambda^{\pm}_{m_{\pm} - 1}(C),t_{\pm} \lambda^{\pm}_{m_{\pm}}(C)$ (or if $m_+ = \infty$, the positive eigenvalues are just $\lambda^+(C)$, and similarly for when $m_- = \infty$).

  The operator $C'$ was constructed specifically so that $\lambda(X) \maj \lambda(C')$.
  Therefore, by \Cref{thm:c-numerical-range-via-majorization}
  \begin{equation*}
    \trace(XA) \in \ocnr[C'](A) \subseteq \conv \ \bigcup_{\mathclap{\qquad 0 \le k_{\pm} \le \rank C_{\pm}}} \ \ocnr[C_{k_-,k_+}](A)
  \end{equation*}
  where the second inclusion holds because any $X' \in \orbit(C')$ is a convex combination of four $X_{k_-,k_+} \in \orbit(C_{k_-,k_+})$.
  To see this, notice that the equation defining $C'$ actually establishes that $\lambda(C')$ is a convex combination of four appropriately permuted $\lambda(C_{k_-,k_+})$ (with up to two zeros added to any of these sequences).
  Then by \Cref{prop:orbit-closure-equivalences} any $X' \in \orbit(C')$ has the form $\diag(\lambda(C')) \oplus \zop$ in some basis for an appropriately sized $\zop$, and this is clearly a convex combination of the same four $\diag(\lambda(C_{k_-,k_+})) \oplus \zop \in \orbit(C_{k_-,k_+})$.
\end{proof}

The following theorem provides a complete characterization of when the orbit-closed $C$-numerical range of a compact operator is closed in terms of submajorization.
The equivalence \ref{item:positivity-ocnr} $\Leftrightarrow$ \ref{item:similarity-preserving-ocnr} generalizes \cite[Theorem~1(i)]{dBGS-1972-JLMSIS} for the standard numerical range and \cite[Result~(2.5)]{LP-1995-FDaaMaE} for finite rank $C$.
The proof of \cite[Theorem~1(i)]{dBGS-1972-JLMSIS} utilized weak sequential compactness of the unit ball in $\Hil$ in order to obtain the requisite limit vector, whereas \cite[Result~(2.5)]{LP-1995-FDaaMaE} applied the weak operator topology compactness of the unit ball of $B(\Hil)$ to obtain the limiting operator.
Our proof below shows that the true essence of this phenomenon actually takes place relative to a different topology.
In particular, the key is the nontrivial weak* compactness of $\set{ X \in \traceclass^{sa} \mid \lambda(X) \submaj \lambda(C) }$, where this set is viewed not as a subset of $B(\Hil)$, but as a subset of $\traceclass \cong \K^{*}$ which is why the condition that $A \in \K$ is essential for these limit processes.

\begin{theorem}
  \label{thm:compact-closed-iff-contains-weak-majorization}
  Let $C \in \traceclass^{sa}$ be a selfadjoint trace-class operator and let $A \in \K$ be a compact operator.
  Then
  \begin{equation*}
    \closure{\ocnr(A)} = \set{ \trace(XA) \mid X \in \traceclass^{sa}, \lambda(X) \submaj \lambda(C) }.
  \end{equation*}
  Consequently, the following are equivalent.
  \begin{enumerate}
  \item\label{item:ocnr-compact-closed} $\ocnr(A)$ is closed.
  \item\label{item:ocnr-equal-weak-majorization} $\ocnr(A) = \set{ \trace(XA) \mid X \in \traceclass^{sa}, \lambda(X) \submaj \lambda(C) }$.
  \item\label{item:ocnr-contains-initial-ocnrs} $\ocnr(A) \supseteq \ocnr[C_{m_-,m_+}](A)$ for every $0 \le m_{\pm} \le \rank C_{\pm}$,
  \end{enumerate}
  where $C_{m_-,m_+}$ are defined as in \Cref{lem:weak-majorization-c-numerical-range}.
\end{theorem}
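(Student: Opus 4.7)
The plan is to prove the main equality $\closure{\ocnr(A)} = \set{\trace(XA) \mid X \in \traceclass^{sa}, \lambda(X) \submaj \lambda(C)}$ first, and then to deduce the three equivalences as formal corollaries. Let $\mathcal{W}$ denote the right-hand side.

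For the inclusion $\closure{\ocnr(A)} \subseteq \mathcal{W}$, I will exhibit $\mathcal{W}$ as the weak*-continuous image of a weak*-compact subset of $\traceclass$. Set $S := \set{X \in \traceclass^{sa} \mid \lambda(X) \submaj \lambda(C)}$; submajorization forces $\norm{X}_1 \le \norm{C}_1$, so $S$ is trace-norm bounded. By \Cref{prop:wot-closure-convex-orbit-weak-majorization}, $S$ coincides with the wot-closure of $\conv\ugroup(C)$, and since any net in $\traceclass$ that converges weak* also converges wot (weak* neighborhoods, defined by pairings with all compacts, refine the wot neighborhoods defined by pairings with rank-one operators), $S$ is weak*-closed in $\traceclass$. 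Banach--Alaoglu then makes $S$ weak*-compact. Because $A \in \K$, the trace pairing $X \mapsto \trace(XA)$ is by definition weak*-continuous, so $\mathcal{W}$ is compact and hence closed in $\complex$. Combined with the inclusion $\ocnr(A) \subseteq \mathcal{W}$ coming from \Cref{thm:c-numerical-range-via-majorization} (since $\maj$ implies $\submaj$), this yields $\closure{\ocnr(A)} \subseteq \mathcal{W}$.

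For the reverse inclusion $\mathcal{W} \subseteq \closure{\ocnr(A)}$, \Cref{cor:c-numerical-range-convex} gives convexity of $\closure{\ocnr(A)}$, so by \Cref{lem:weak-majorization-c-numerical-range} it suffices to show $\ocnr[C_{m_-,m_+}](A) \subseteq \closure{\ocnr(A)}$ for every admissible pair $m_\pm$. Consider first $m_\pm < \infty$, in which case any $Y \in \orbit(C_{m_-,m_+})$ is compact normal of finite rank $m_- + m_+$ and hence has infinite-dimensional kernel. Fix an orthonormal sequence $\set{e_k}_{k \ge 1}$ in $\ker Y$ and enumerate the ``missing'' eigenvalues of $\lambda(C)$ as $\set{\mu_j}_{j \ge 1}$, namely $\lambda^+_{m_++j}(C)$ together with $-\lambda^-_{m_-+j}(C)$. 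Define $R_n := \sum_j \mu_j\, e_{n+j} e_{n+j}^{*} \in \traceclass^{sa}$. Because the supports of $Y$ and $R_n$ are orthogonal, $\lambda(Y+R_n) = \lambda(Y) \cup \set{\mu_j} = \lambda(C)$, whence $Y + R_n \in \orbit(C)$ by \Cref{prop:orbit-closure-equivalences}, so $\trace((Y+R_n)A) \in \ocnr(A)$. Moreover $R_n \to 0$ in weak* of $\traceclass$: for any $K \in \K$, $\trace(R_n K) = \sum_j \mu_j \angles{K e_{n+j}, e_{n+j}}$, each term vanishes as $n \to \infty$ (orthonormal sequences converge weakly to zero and $K$ is compact), and the terms are dominated by $\abs{\mu_j}\norm{K}$ with $\sum_j \abs{\mu_j} < \infty$, so dominated convergence applies. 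Taking $K = A$ gives $\trace((Y+R_n)A) \to \trace(YA)$, placing $\trace(YA)$ in $\closure{\ocnr(A)}$. When $m_+$ or $m_-$ is infinite, approximate $C_{m_-,m_+}$ in trace norm by finite-rank truncations of the form $C_{k_-,k_+}$ with $k_\pm < \infty$ and invoke the Hausdorff continuity of \Cref{thm:continuity} together with the finite-rank case.

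Finally, the three equivalences follow quickly: \ref{item:ocnr-compact-closed} $\Leftrightarrow$ \ref{item:ocnr-equal-weak-majorization} is immediate from $\closure{\ocnr(A)} = \mathcal{W}$; \ref{item:ocnr-equal-weak-majorization} $\Rightarrow$ \ref{item:ocnr-contains-initial-ocnrs} because $\lambda(C_{m_-,m_+}) \submaj \lambda(C)$ forces $\ocnr[C_{m_-,m_+}](A) \subseteq \mathcal{W} = \ocnr(A)$; and conversely \ref{item:ocnr-contains-initial-ocnrs} $\Rightarrow$ \ref{item:ocnr-equal-weak-majorization} combines the convexity of $\ocnr(A)$ (\Cref{cor:c-numerical-range-convex}) with \Cref{lem:weak-majorization-c-numerical-range} to obtain $\ocnr(A) \supseteq \conv \bigcup_{m_\pm} \ocnr[C_{m_-,m_+}](A) = \mathcal{W}$, the other direction being automatic. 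The main technical obstacle is the closedness of $\mathcal{W}$ — this demands identifying precisely the topology on $\traceclass$ (namely weak*, rather than wot on $B(\Hil)$ or norm on either) in which $S$ is compact and the trace pairing is continuous. The construction of the perturbation $R_n$ with the prescribed eigenvalue sequence and weak* limit zero is the secondary point, and it is exactly here that the hypothesis $A \in \K$ is essential.
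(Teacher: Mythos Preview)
Your proof is correct, and for the forward inclusion $\closure{\ocnr(A)} \subseteq \mathcal{W}$ and for the equivalences \ref{item:ocnr-compact-closed}--\ref{item:ocnr-contains-initial-ocnrs} it is essentially identical to the paper's argument: the crux is that $S$ is weak*-compact in $\traceclass \cong \K^{*}$ and the pairing $X \mapsto \trace(XA)$ is weak*-continuous because $A \in \K$.

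Where you diverge from the paper is in the reverse inclusion $\mathcal{W} \subseteq \closure{\ocnr(A)}$. The paper argues abstractly: since the weak* topology on trace-norm bounded sets lies between the trace-norm topology and the weak operator topology, \Cref{prop:wot-closure-convex-orbit-weak-majorization} and \Cref{prop:convex-orbit-majorization} together force the weak* closure of $\set{X : \lambda(X) \maj \lambda(C)}$ to be exactly $S$; weak* continuity of the trace pairing then makes $\ocnr(A)$ dense in $\mathcal{W}$ in one line. You instead bypass \Cref{prop:convex-orbit-majorization} entirely, using \Cref{lem:weak-majorization-c-numerical-range} to reduce to showing each $\ocnr[C_{m_-,m_+}](A) \subseteq \closure{\ocnr(A)}$, and then you build explicit perturbations $R_n$ supported in $\ker Y$ that restore the missing eigenvalues and vanish weak*. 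This is a concrete realization of the same density, and it has the expository virtue of making visible exactly which operators in $\orbit(C)$ approximate a given submajorized $X$; the paper's route is shorter and avoids the case split on finite versus infinite $m_{\pm}$.
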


\begin{proof}
  Recall that $\traceclass$ is the dual $\K^{*}$ of the compact operators via the isometric isomorphism $C \mapsto \trace(C\,\bigcdot)$.
  By the Banach--Alaoglu theorem, bounded subsets of $\traceclass$ which are weak* closed are weak* compact.

  Since the weak* topology on $\traceclass$ is finer than the weak operator topology and coarser than, on trace-norm bounded sets, the (operator) norm topology, by \Cref{prop:wot-closure-convex-orbit-weak-majorization} the set $\set{ X \in \traceclass^{sa} \mid \lambda(X) \submaj \lambda(C) }$ is bounded and weak* closed and therefore weak* compact.
  Because $A \in \K$, the map $X \mapsto \trace(XA)$ is weak* continuous, and therefore $\set{ \trace(XA) \mid X \in \traceclass^{sa}, \lambda(X) \submaj \lambda(C) }$ is compact since it is the continuous image of a compact set.

  Because the weak* topology on $\traceclass$ is weaker than the trace-norm topology, we have
  \begin{equation*}
    \closure[w*]{\closure[\norm{\bigcdot}_1]{\conv \ugroup(C)}} = \closure[\mathrlap{w*}]{\conv \ugroup(C)}.
  \end{equation*}
  Moreover, by \Cref{prop:wot-closure-convex-orbit-weak-majorization,prop:convex-orbit-majorization} the weak* closure of $\set{ X \in \traceclass^{sa} \mid \lambda(X) \maj \lambda(C) }$ is $\set{ X \in \traceclass^{sa} \mid \lambda(X) \submaj \lambda(C) }$ and therefore by \Cref{thm:c-numerical-range-via-majorization} and weak* continuity of $X \mapsto \trace(XA)$, $\ocnr(A)$ is dense in $\set{ \trace(XA) \mid X \in \traceclass^{sa}, \lambda(X) \submaj \lambda(C) }$.
  Hence
  \begin{equation*}
    \closure{\ocnr(A)} = \set{ \trace(XA) \mid X \in \traceclass^{sa}, \lambda(X) \submaj \lambda(C) }.
  \end{equation*}

  \ref{item:ocnr-compact-closed} $\Leftrightarrow$ \ref{item:ocnr-equal-weak-majorization}.
  This is immediate from what we have just proven.

  \ref{item:ocnr-equal-weak-majorization} $\Leftrightarrow$ \ref{item:ocnr-contains-initial-ocnrs}.
  This is immediate from \Cref{thm:c-numerical-range-via-majorization}, \Cref{cor:c-numerical-range-convex} and \Cref{lem:weak-majorization-c-numerical-range}.
\end{proof}

As previously remarked, the equivalence \ref{item:positivity-ocnr} $\Leftrightarrow$ \ref{item:similarity-preserving-ocnr} of \Cref{thm:compact-closed-iff-contains-weak-majorization} generalizes the original result of de Barra, Giles and Sims \cite[Theorem~1(i)]{dBGS-1972-JLMSIS} concerning the standard numerical range, which states that if $A$ is a compact operator, then $0 \in W(A)$ if and only if $W(A)$ is closed.

One might wonder if there is a condition analogous to that of de Barra, Giles and Sims which is somehow tied only to $0$.
The following example shows that for a na\"ive analogue, the result is false, but the corollary after that shows that not all hope is lost.

\begin{example}
  This example shows that, unlike for the case of the standard numerical range, it is insufficient to simply have $0 \in W(A)$ for $A$ compact in order to guarantee that $W(A)$ is closed.
  Indeed, it is even insufficient to have an orthonormal basis $\set{e_n}_{n=1}^{\infty}$ for which $\angles{A e_n,e_n} = 0$ for all $n \in \nats$, even if $A$ is selfadjoint and trace-class.

  Consider $A = \diag(-1,\frac{1}{2},\frac{1}{4},\ldots)$.
  Then $A$ is selfadjoint and trace-class and $\trace(A) = 0$.
  Therefore by \cite[Theorem~1]{FF-1980-PAMS} there is an orthonormal basis with respect to which the diagonal of $A$ is the zero sequence.
  Now let $P$ be a rank-$2$ projection.
  From \Cref{thm:c-numerical-range-selfadjoint-formula} we see that $\ocnr[P](A) = (-1,\frac{3}{4}]$, which clearly contains $0$ and yet is not closed.
\end{example}

Although \Cref{thm:compact-closed-iff-contains-weak-majorization} provides a complete characterization of when the orbit-closed $C$-numerical range of a compact operator is closed, the condition seems nontrivial to check.
The following corollary provides a sufficient condition which is hopefully easier to verify in practice.

\begin{corollary}
  \label{cor:compact-closure-direct-sum-zero}
  Let $C \in \traceclass^{sa}$ be a selfadjoint trace-class operator and let $A \in \K$ be a compact operator.
  Then $\closure{\ocnr(A)} = \ocnr[C \oplus \zop](A \oplus \zop)$, where the $\zop$ acts on a space of dimension at least $\rank C$.
  In particular, if $P$ is a projection of rank at least $\rank C$ for which $PA = AP = 0$, then $\ocnr(A)$ is closed.
\end{corollary}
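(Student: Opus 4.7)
My plan is to prove the main set equality $\closure{\ocnr(A)} = \ocnr[C \oplus \zop](A \oplus \zop)$ first and then deduce the closedness claim. For the inclusion $\ocnr[C \oplus \zop](A \oplus \zop) \subseteq \closure{\ocnr(A)}$, I would take any $Y \in \orbit(C \oplus \zop)$, write it in $2 \times 2$ block form with respect to $\Hil \oplus \Hil'$, and observe that $\trace(Y(A \oplus \zop)) = \trace(Y_{11} A)$ collapses to the upper-left block. The standard Ky Fan-type fact that selfadjoint compressions are submajorized, namely $\lambda(Y_{11}) \submaj \lambda(Y) = \lambda(C)$, combined with \Cref{thm:compact-closed-iff-contains-weak-majorization} (which uses $A \in \K$), places $\trace(Y_{11} A)$ in $\closure{\ocnr(A)}$.

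For the reverse inclusion, I would combine \Cref{thm:compact-closed-iff-contains-weak-majorization} with \Cref{lem:weak-majorization-c-numerical-range} to rewrite the closure as $\conv \bigcup_{m_\pm} \ocnr[C_{m_-,m_+}](A)$. Since $\ocnr[C \oplus \zop](A \oplus \zop)$ is convex by \Cref{cor:c-numerical-range-convex}, it suffices to contain each piece $\ocnr[C_{m_-,m_+}](A)$. For $X \in \orbit(C_{m_-,m_+})$, my plan is to construct $X' \in \traceclass^{sa}(\Hil')$ whose eigenvalue sequence consists precisely of the ``missing'' entries of $\lambda(C)$ beyond those of $\lambda(C_{m_-,m_+})$, namely $\lambda^+_{m_++1}(C), \lambda^+_{m_++2}(C), \ldots$ together with $-\lambda^-_{m_-+1}(C), -\lambda^-_{m_-+2}(C), \ldots$. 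The total count is $\rank C - m_+ - m_- \leq \dim \Hil'$, so such $X'$ fits; then $\lambda(X \oplus X') = \lambda(C)$ places $X \oplus X' \in \orbit(C \oplus \zop)$ by \Cref{prop:orbit-closure-equivalences}, and $\trace((X \oplus X')(A \oplus \zop)) = \trace(XA)$.

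For the ``in particular'' clause, the hypotheses $PA = AP = 0$ force $A = A_1 \oplus 0$ relative to $\Hil = \ker P \oplus \operatorname{ran} P$, with $\dim \operatorname{ran} P \geq \rank C$. The idea is to apply the main equality already established with $A_1$ on $\ker P$ and $\operatorname{ran} P$ playing the role of the auxiliary space $\Hil'$, identifying the resulting ``$C \oplus \zop$'' with a representative in $\orbit(C)$ by \Cref{prop:orbit-closure-equivalences}, so that $\ocnr(A)$ coincides with $\closure{\ocnr[C](A_1)}$ and is therefore closed. The principal obstacle will be the eigenvalue-matching construction in the reverse inclusion, where the hypothesis $\dim \Hil' \geq \rank C$ is precisely what lets the missing spectrum of $C$ be housed in $\Hil'$. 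A secondary subtlety in the ``in particular'' deduction is a small case analysis depending on whether $\ker P$ or $\operatorname{ran} P$ is large enough to host a representative of $C$, which one handles by swapping roles in the identification when necessary.
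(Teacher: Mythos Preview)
Your proposal is correct and the reverse inclusion is handled exactly as in the paper (placing the ``missing'' eigenvalues on $\Hil'$ and invoking \Cref{prop:orbit-closure-equivalences}). The one genuine methodological difference is in the inclusion $\ocnr[C \oplus \zop](A \oplus \zop) \subseteq \closure{\ocnr(A)}$: the paper does not use the compression/submajorization argument at all, but instead observes that $A$ and $A \oplus \zop$ are approximately unitarily equivalent (since $A \in \K$) and invokes \Cref{cor:approximate-unitary-equivalence-same-closure} to conclude $\closure{\ocnr(A)} = \closure{\ocnr[C \oplus \zop](A \oplus \zop)}$; it then shows the right-hand side is already closed via \Cref{thm:compact-closed-iff-contains-weak-majorization}\ref{item:ocnr-contains-initial-ocnrs}. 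Your route via $\lambda(Y_{11}) \submaj \lambda(Y)$ and \Cref{thm:compact-closed-iff-contains-weak-majorization} is more elementary in that it avoids the approximate unitary equivalence machinery, and it gives the inclusion directly rather than passing through equality of closures.

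For the ``in particular'' clause, your plan is right in spirit but the phrase ``swapping roles'' is not quite the needed fix when $\ker P$ is finite dimensional: the main equality was proved for $A$ acting on an infinite-dimensional space, so you cannot apply it to $A_1$ on $\ker P$ directly, and putting the representative of $C$ on $\operatorname{ran} P$ does not obviously help since $A$ vanishes there. The paper's maneuver is simply to replace $P$ by an infinite, co-infinite subprojection (which still annihilates $A$), thereby forcing both $\ker P$ and $\operatorname{ran} P$ to be infinite; then one can choose $C'$ on $\ker P$ with $\lambda(C') = \lambda(C)$ and apply the main equality verbatim. This is a minor patch to your argument rather than a different idea.
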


In the following proof of this corollary, we will be considering operators acting on Hilbert spaces $\Hil_1$ (separable, infinite dimensional) and on $\Hil_1 \oplus \Hil_2$ (with $\Hil_2$ separable).
It will sometimes be convenient to think of these operators acting on the same space which we do by selecting a fixed, but arbitrary isometric isomorphism $\Hil_1 \to \Hil_1 \oplus \Hil_2$.
This induces a *-isomorphism $B(\Hil_1) \to B(\Hil_1 \oplus \Hil_2)$.
Crucially, while the resulting *-isomorphism depends on the specific isometric isomorphism, objects and properties that are invariant under unitary conjugation, such as $\ocnr(A)$, $\orbit(C)$ or approximate unitary equivalence, are independent of this choice.
Moreover, under this identification $\orbit(C) = \orbit(C \oplus \zop)$ because $\lambda(C) = \lambda(C \oplus \zop)$ and the eigenvalue sequence is a complete invariant by \Cref{prop:orbit-closure-equivalences}.
This also makes it possible to read \Cref{cor:compact-closure-direct-sum-zero} as $\closure{\ocnr(A)} = \ocnr(A \oplus \zop)$.

\begin{proof}
  Since $A \in \K$, it is clear that $A$ and $A \oplus \zop$ are approximately unitarily equivalent (via the identification $B(\Hil_1) \to B(\Hil_1 \oplus \Hil_2)$ mentioned prior to the proof).
  Indeed, if $A$ acts on $\Hil_1$ and $A \oplus \zop$ acts on $\Hil_1 \oplus \Hil_2$, consider a sequence of finite projections $P_n$ converging in the strong operator topology to the identity, and notice that $P_n A, A P_n \to A$ in norm since $A \in \K$.
  Let $U_n : \Hil_1 \to \Hil_1 \oplus \Hil_2$ be any unitary for which $U_n P_n = P_n \oplus \zop$ (these exist since each $P_n^{\perp}$ is an infinite projection), and notice that $U_n A U_n^{*} \to A \oplus \zop$.
  Therefore, the closures of $\ocnr(A)$ and $\ocnr[C \oplus \zop](A \oplus \zop)$ coincide by \Cref{cor:approximate-unitary-equivalence-same-closure}.

  To complete the proof, it suffices to prove that $\ocnr(A \oplus \zop)$ is closed.
  For this, let $C_{m,k}$ acting on $\Hil_1$ be defined as in
  \Cref{lem:weak-majorization-c-numerical-range}.
  Then there is a $C'_{m,k}$ acting on $\Hil_2$ such that $C_{m,k} \oplus C'_{m,k} \in \orbit(C \oplus \zop)$, where it suffices by \Cref{prop:orbit-closure-equivalences} to select a selfadjoint operator $C'_{m,k}$ whose nonzero eigenvalues are precisely the terms of $\lambda(C)$ missing from $\lambda(C_{m,k})$.
  Thus, for any $X \in \orbit(C_{m,k})$ we have $X \oplus C'_{m,k} \in \orbit(C \oplus \zop)$, and therefore
  \begin{equation*}
    \trace(XA) = \trace \big( (X \oplus C'_{m,k}) (A \oplus \zop) \big) \in \ocnr[C \oplus \zop](A \oplus \zop).
  \end{equation*}
  Since $X \in \orbit(C_{m,k})$ was arbitrary, as were $m,k$, we find that $\ocnr(A \oplus \zop) \supseteq \ocnr[C_{m,k}](A)$ for all $m,k \in \nats \cup \set{0,\infty}$.
  Therefore, by \Cref{lem:weak-majorization-c-numerical-range} and \Cref{thm:compact-closed-iff-contains-weak-majorization},
  \begin{equation*}
    \ocnr[C \oplus \zop](A \oplus \zop) \supseteq \set{ \trace(XA) \mid X \in \traceclass^{sa}, \lambda(X) \submaj \lambda(C) } = \closure{\ocnr(A)} = \closure{\ocnr[C \oplus \zop](A \oplus \zop)}.
  \end{equation*}

  Now suppose that $A \in \K$ is an operator for which there is a projection of rank at least $\rank C$ for which $PA = AP = 0$.
  If $P^{\perp}$ is finite we may pass from $P$ to an infinite, co-infinite subprojection to ensure the complement is infinite.
  Then if $A'$ denotes the compression of $A$ to $P^{\perp} \Hil$, we certainly have $A = A' \oplus \zop$ where $\zop$ acts on $P \Hil$.
  Moreover, since $P^{\perp}$ is infinite, there is some $C'$ acting on $P^{\perp} \Hil$ such that $\lambda(C') = \lambda(C)$.
  Consequently,
  \begin{equation*}
    \ocnr(A) = \ocnr[C' \oplus \zop](A' \oplus \zop) = \closure{\ocnr[C'](A'})
  \end{equation*}
  is closed.
\end{proof}

\subsection{Bounded operators}
\label{subsec:bounded-operators}

The situation for $A \in \K$ compact was made especially tractable because of the duality $\traceclass \cong \K^{*}$.
As we now turn our attention to arbitrary operators $A \in B(\Hil)$, the analysis becomes significantly more complex.
However, as we will observe, much of the analysis can be restricted to the compact portion of $A$ which lies outside the essential spectrum; for selfadjoint $A$, we mean the operator $(A-mI)_+$ where $m := \max \essspec(A)$.

From now on, we will restrict our attention primarily to $C$ \emph{positive} and trace-class.
The reason is essentially to make the complicated analysis somewhat manageable.
In order to emphasize positivity, we will use the singular value sequence $s(C)$ to refer to the eigenvalue sequence (as opposed to $\lambda(C)$) since these coincide.

Since the orbit-closed $C$-numerical range is convex when $C$ is selfadjoint, one natural way to analyze boundary points is to first rotate the operator and then take the real part, as in the diagram:
\begin{center}
  \begin{tikzcd}
    A \arrow[r, maps to] \arrow[d, "\ocnr(\bigcdot)"] & e^{i\theta}A \arrow[r, maps to] \arrow[d, "\ocnr(\bigcdot)"] & \Re(e^{i\theta}A) \arrow[d, "\ocnr(\bigcdot)"] \\
    \ocnr(A) \arrow[r, maps to]                       & e^{i\theta}\ocnr(A) \arrow[r, maps to]                       & \Re(e^{i\theta}\ocnr(A)),                      
  \end{tikzcd}
\end{center}
where we have used \Cref{prop:c-numerical-range-basics} to commute both $\Re$ and multiplication by $e^{i\theta}$ with $\ocnr(\bigcdot)$.
In so doing one is able essentially to reduce the investigation of points on the boundary of the numerical range to the case when $A$ is selfadjoint.
However, there are often technicalities that arise when there is a line segment on the boundary because, after rotation, there is more than one point on the boundary with maximal real part (see \Cref{fig:rotation-technique}).
This rotation and real part technique goes all the way back to Kippenhahn in \cite{Kip-1951-MN} (or the English translation \cite[\textsection{}3]{Kip-2008-LMA}), but appears elsewhere in the literature, such as \cite{Joh-1978-SJNA}.

\begin{figure}[h]
  \centering
  \subfloat[Supporting line with unique intersection]{%
    \includegraphics[width=0.25\textwidth]{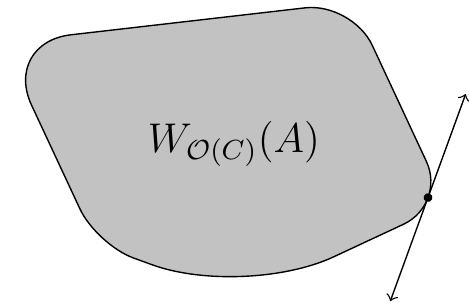}%
    \includegraphics[width=0.25\textwidth]{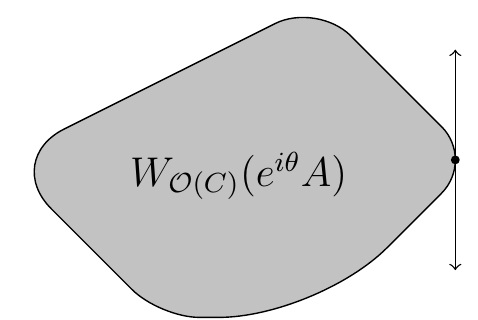}%
  }
  \subfloat[Supporting line with nonunique intersection]{%
    \includegraphics[width=0.25\textwidth]{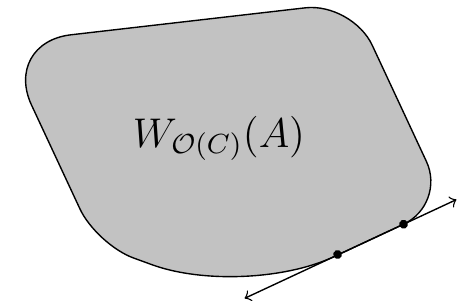}%
    \includegraphics[width=0.18\textwidth]{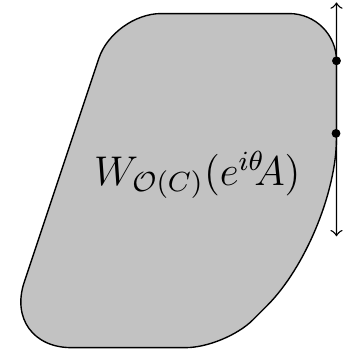}%
  }
  \caption{Rotation technique for points on the boundary of $\ocnr(A)$.}
  \label{fig:rotation-technique}
\end{figure}

We begin with a simple but rather important lemma concerning submajorization which will be essential in our analysis of the boundary.
Effectively, it says that if $(d_n) \submaj (c_n)$ then $(d_n a_n) \submaj (c_n a_n)$ for any nonnegative decreasing sequence $(a_n)$;
moreover, if $(d_n a_n) \maj (c_n a_n)$ and $(a_n) \in \czstar$ is strictly positive, then $(d_n)$ is block majorized by $(c_n)$.
The first part of the lemma, namely that $(d_n) \submaj (c_n)$ implies $(d_n a_n) \submaj (c_n a_n)$, is a known result concerning submajorization (see, for example \cite[5.A.4.d]{MOA-2011}), but to the authors' knowledge the remainder of the lemma has not appeared in the literature and we will make full use of these additional facts later on.
To see the connection between the above formulation in terms of majorization and the actual statement of the lemma, consider $\delta_n := c_n - d_n$.

\begin{lemma}
  \label{lem:majorization-product-decreasing-sequence}
  Suppose that $(\delta_n)$ is a real-valued sequence and $(a_n)$ is a nonnegative decreasing sequence (even finite sequences are considered).
  If for every $N$, $\sum_{n=1}^N \delta_n \ge 0$, then
  \begin{enumerate}
  \item \label{item:weak-majorization-implies-c-weak-majorization} for every $N$, $\displaystyle\sum_{n=1}^N \delta_n a_n \ge 0$;
  \item \label{item:c-majorization-and-jump-implies-majorization} if $\displaystyle\sum_{n=1}^N \delta_n a_n = 0$, then $\displaystyle\sum_{n=1}^L \delta_n = 0$ whenever $a_L > a_{L+1}$ with $L < N$, \\ and if, in addition, $a_N > 0$, then $\displaystyle\sum_{n=1}^N \delta_n = 0$;
  \item \label{item:liminf-c-majorization-implies-block-majorization} if $\displaystyle\liminf_{N \to \infty} \sum_{n=1}^N \delta_n a_n = 0$, then $\displaystyle\sum_{n=1}^L \delta_n = 0$ whenever $a_L > a_{L+1}$.
  \end{enumerate}
\end{lemma}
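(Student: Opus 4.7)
The plan is to reduce everything to Abel summation by parts. Writing $S_N := \sum_{n=1}^N \delta_n$ for the partial sums (which are nonnegative by hypothesis, with the convention $S_0 := 0$), the key identity I would first establish, by telescoping or induction on $N$, is
\begin{equation*}
  \sum_{n=1}^N \delta_n a_n = \sum_{n=1}^{N-1} S_n (a_n - a_{n+1}) + S_N a_N.
\end{equation*}
Every factor on the right is nonnegative: $S_n \ge 0$ by hypothesis, $a_n - a_{n+1} \ge 0$ by monotonicity, and $a_N \ge 0$. This presentation of the weighted sum as a manifestly nonnegative combination is the backbone of the whole argument.

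With the identity in hand, part \ref{item:weak-majorization-implies-c-weak-majorization} follows immediately. For part \ref{item:c-majorization-and-jump-implies-majorization}, if the left-hand side equals zero then each nonnegative summand on the right must vanish individually. Thus $S_L(a_L - a_{L+1}) = 0$ for $1 \le L \le N-1$, forcing $S_L = 0$ whenever $a_L > a_{L+1}$ with $L < N$; and $S_N a_N = 0$ forces $S_N = 0$ when $a_N > 0$.

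For part \ref{item:liminf-c-majorization-implies-block-majorization}, set $T_N := \sum_{n=1}^{N-1} S_n(a_n - a_{n+1})$. The identity above, combined with $S_N a_N \ge 0$, gives $0 \le T_N \le \sum_{n=1}^N \delta_n a_n$, and $T_N$ is nondecreasing in $N$. Hence $\lim_{N\to\infty} T_N \le \liminf_{N \to \infty} \sum_{n=1}^N \delta_n a_n = 0$, so $T_N = 0$ for every $N$; then arguing termwise exactly as in part \ref{item:c-majorization-and-jump-implies-majorization} yields $S_L = 0$ whenever $a_L > a_{L+1}$.

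I do not anticipate any significant obstacle: once Abel summation is in place, the proof of (i) is one line and (ii), (iii) are just extraction of the vanishing cases. The only mildly subtle point is part (iii), where one must notice that although the partial sums $\sum_{n=1}^N \delta_n a_n$ themselves are not monotone (the term $S_N a_N$ can fluctuate), the ``purified'' partial sums $T_N$ \emph{are} monotone, which upgrades a $\liminf$ hypothesis to the full conclusion $T_N \equiv 0$.
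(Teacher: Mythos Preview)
Your proposal is correct and follows essentially the same approach as the paper: both arguments hinge on the Abel summation identity $\sum_{n=1}^N \delta_n a_n = S_N a_N + \sum_{L=1}^{N-1} S_L(a_L - a_{L+1})$ and then read off (i)--(iii) from the nonnegativity of every term. Your treatment of (iii), isolating the monotone sequence $T_N$ and bounding its limit by the $\liminf$, is slightly cleaner than the paper's version, which achieves the same thing via the superadditivity inequality $\liminf(x_N + y_N) \ge \liminf x_N + \liminf y_N$.
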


\begin{proof}
  The proof is a simple application of summation by parts.
  Indeed, for any $N$,
  \begin{equation*}
    \sum_{n=1}^N \delta_n a_n = a_N \sum_{n=1}^N \delta_n + \sum_{L=1}^{N-1} (a_L - a_{L+1}) \sum_{n=1}^L \delta_n.
  \end{equation*}
  Notice that by hypothesis $a_N, \sum_{n=1}^N \delta_n$ are each nonnegative as are $(a_L - a_{L+1})$ and $\sum_{n=1}^L \delta_n$ for each $L < N$.
  Therefore, $\sum_{n=1}^N \delta_n a_n \ge 0$ as well, proving \ref{item:weak-majorization-implies-c-weak-majorization}.

  Moreover, if $\sum_{n=1}^N \delta_n a_n = 0$ and for $L < N$ if $a_L > a_{L+1}$, then $\sum_{n=1}^L \delta_n = 0$.
  In addition, if $a_N > 0$, then $\sum_{n=1}^N \delta_n = 0$, which establishes \ref{item:c-majorization-and-jump-implies-majorization}.

  Finally, notice that
  \begin{align*}
    \liminf_{N \to \infty} \sum_{n=1}^N \delta_n a_n &\ge \liminf_{N \to \infty} a_N \sum_{n=1}^N \delta_n + \liminf_{N \to \infty} \sum_{L=1}^{N-1} (a_L - a_{L+1}) \sum_{n=1}^L \delta_n \\
                                                     &\ge \liminf_{N \to \infty} \sum_{L=1}^{N-1} (a_L - a_{L+1}) \sum_{n=1}^L \delta_n.
  \end{align*}
  Therefore, if the limit inferior on the left-hand side is zero, then we conclude $\sum_{n=1}^L \delta_n = 0$ whenever $a_L > a_{L+1}$.
\end{proof}

The next proposition guarantees that the supremum of the orbit-closed $C$-numerical range is attained whenever $A,C$ are positive and compact.
This is our first sufficient condition for a point on the boundary to be included in $\ocnr(A)$.

\begin{proposition}
  \label{prop:c-numerical-range-maximum}
  Let $C,A$ be positive compact operators with $C$ trace-class.
  Then
  \begin{equation*}
    \sup \ocnr(A) = \sum_{n=1}^{\infty} s_n(C) s_n(A),
  \end{equation*}
  and moreover the supremum is attained.
\end{proposition}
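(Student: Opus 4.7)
The plan is to prove the upper bound $\sup \ocnr(A) \le \sum_n s_n(C) s_n(A)$ using majorization together with von Neumann's trace inequality, and then to exhibit an explicit $X \in \orbit(C)$ achieving equality by diagonalizing $X$ and $A$ in a common basis.

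First, by \Cref{thm:c-numerical-range-via-majorization} applied to the selfadjoint $C$, every element of $\ocnr(A)$ has the form $\trace(XA)$ with $X \in \traceclass^{sa}$ and $\lambda(X) \maj s(C)$. Since $s(C) \ge 0$, the definition of majorization for $\ell^1$-sequences forces $\lambda^-(X) \submaj 0$, hence $\lambda^-(X)$ is identically zero. Therefore $X \in \traceclass^+$ and $s(X) = \lambda(X) \maj s(C)$. Von Neumann's trace inequality then yields $\trace(XA) \le \sum_n s_n(X) s_n(A)$. Setting $\delta_n := s_n(C) - s_n(X)$, the majorization $s(X) \maj s(C)$ gives $\sum_{n=1}^N \delta_n \ge 0$ for every $N$, so \Cref{lem:majorization-product-decreasing-sequence}\ref{item:weak-majorization-implies-c-weak-majorization} applied with the nonincreasing sequence $a_n := s_n(A)$ yields $\sum_n s_n(X) s_n(A) \le \sum_n s_n(C) s_n(A)$. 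This establishes the upper bound.

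For attainment, I construct $X \in \orbit(C)$ explicitly. Since $A \in \K^+$, there is an orthonormal basis $\{e_n\}_{n \in \nats}$ of $\Hil$ consisting of eigenvectors of $A$, which I order so that $A e_n = s_n(A) e_n$ for every $n$ (with vectors spanning $\ker A$ placed past position $\rank A$ when $A$ has finite rank). Define $X$ by $X e_n := s_n(C) e_n$. Then $X \in \traceclass^+$ with eigenvalue sequence $\lambda(X) = s(C) = \lambda(C)$, so by \Cref{prop:orbit-closure-equivalences} we have $X \in \orbit(C)$. A direct computation gives $\trace(XA) = \sum_n s_n(C) \langle A e_n, e_n \rangle = \sum_n s_n(C) s_n(A)$, so the supremum is attained.

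The only subtlety is the bookkeeping when $\rank A < \infty$ while $C$ has infinite rank: every $s_n(C)$ with $n > \rank A$ must then be placed on an eigenvector of $A$ lying in $\ker A$, which is automatic because $\Hil$ is separable infinite-dimensional, so $\ker A$ is infinite-dimensional in that case. Apart from this, the argument is a clean combination of the majorization characterization \Cref{thm:c-numerical-range-via-majorization}, von Neumann's trace inequality, and the Abel-summation lemma \Cref{lem:majorization-product-decreasing-sequence}.
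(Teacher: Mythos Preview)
Your argument is correct but takes a different route from the paper's. For the upper bound, the paper works directly with $X \in \ugroup(C)$: it diagonalizes $X$ (so $X = \diag s(C)$ in some basis), observes that the diagonal $(d_n)$ of $A$ in that basis satisfies $(d_n) \submaj s(A)$, applies \Cref{lem:majorization-product-decreasing-sequence} to get $\trace(XA) = \sum_n d_n s_n(C) \le \sum_n s_n(A) s_n(C)$, and then extends the bound to all of $\orbit(C)$ by trace-norm continuity. You instead invoke the main theorem \Cref{thm:c-numerical-range-via-majorization} to reduce to positive $X$ with $s(X) \maj s(C)$, then appeal to von Neumann's trace inequality before applying \Cref{lem:majorization-product-decreasing-sequence}. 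Both are valid; the paper's route is more self-contained (avoiding the Radon--Nikodym machinery behind \Cref{thm:c-numerical-range-via-majorization} and the external input of von Neumann's inequality), while yours is cleaner in that it handles all of $\orbit(C)$ at once with no separate continuity step. The attainment arguments are essentially identical. One small technical point: your assertion that there is a full orthonormal basis with $A e_n = s_n(A) e_n$ for every $n$ fails when $A$ has infinite rank and nontrivial kernel, since then $s_n(A) > 0$ for all $n$; the fix is the same as in your finite-rank remark (extend $X$ by zero on $\ker A$), and the paper handles this by writing $A = \zop_{\ker A} \oplus \diag s(A)$.
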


\begin{proof}
  Take any $X \in \ugroup(C)$.
  Then since $X$ is a positive compact operator it is diagonalizable and so in some basis $X = \diag s(C)$.
  Let $(d_n)$ be the diagonal of $A$ in this basis, which is necessarily nonnegative since $A$ is a positive operator.
  It is well-known that $(d_n) \submaj s(A)$ (e.g., see \cite[Theorem~4.2]{AK-2006-OTOAaA}).
  Therefore, since $s(C)$ is a nonincreasing nonnegative sequence, we may apply \autoref{lem:majorization-product-decreasing-sequence} to conclude for all $N \in \nats$,
  \begin{equation*}
    \sum_{n=1}^N d_n s_n(C) \le \sum_{n=1}^N s_n(A) s_n(C) \le \sum_{n=1}^{\infty} s_n(A) s_n(C).
  \end{equation*}
  Taking the limit as $N \to \infty$, we find
  \begin{equation*}
    \trace(XA) = \sum_{n=1}^{\infty} d_n s_n(C) \le \sum_{n=1}^{\infty} s_n(A) s_n(C).
  \end{equation*}
  Moreover, since the trace is trace-norm continuous, we have $\trace(XA) \le \sum_{n=1}^{\infty} s_n(A) s_n(C)$ for any $X \in \orbit(C)$.
  Thus $\sup \ocnr(A) \le \sum_{n=1}^{\infty} s_n(A) s_n(C)$.

  To show equality and thus that the supremum is attained, simply note that there is a (likely different) basis which diagonalizes $A$ since it is a positive compact operator, and in this basis $A = \zop_{\ker A} \oplus \diag s(A)$ (or $A = \diag s(A)$ if $A$ has finite rank).
  Then $X := \zop_{\ker A} \oplus \diag s(C) \in \orbit(C)$ (or $X := \diag s(C) \in \orbit(C)$ if $A$ has finite rank) by \Cref{prop:orbit-closure-equivalences} and we obtain
  \begin{equation*}
    \trace(XA) = \sum_{n=1}^{\infty} s_n(A) s_n(C). \qedhere
  \end{equation*}
\end{proof}

Although the statement of the following theorem is restricted to the selfadjoint case, by the standard rotation argument mentioned at the beginning of this section, the next theorem provides a necessary and sufficient condition for a supporting line\footnotemark{} of $\ocnr(A)$ to contain \emph{at least one} point of $\ocnr(A)$.
Notice that if this supporting line intersects $\closure{\ocnr(A)}$ in exactly one point (in particular, if this point does not lie on a line segment on the boundary), then this theorem gives a necessary and sufficient condition for that point to lie in $\ocnr(A)$ (cf. \Cref{fig:rotation-technique}).

\footnotetext{%
  recall that a \term{supporting line} $L$ for a convex set $C$ in the plane is a line such that $L \cap \closure{C} \not= \emptyset$ and $C$ is entirely contained within one of the closed half-planes determined by $L$.
  Notice that this latter condition ensures $L \cap \closure{C} \subseteq \partial C$.
}

We remark for the reader's convenience a basic fact which will occur in the following theorem and repeatedly throughout the remainder of this paper.
If $A \in B(\Hil)$ and $m := \max \essspec(\Re A)$, then $(\Re A - mI)_+$ is a positive compact operator.
Indeed, it clearly suffices to assume $m = 0$, and then simply notice that the spectral projections $\chi_{(-\infty,-\epsilon)}(\Re A)_+ = 0$ and $\chi_{(\epsilon,\infty)}(\Re A)_+ = \chi_{(\epsilon,\infty)}(\Re A)$ are all finite for every $\epsilon > 0$.

\begin{theorem}
  \label{thm:c-numerical-range-selfadjoint-formula}
  Let $C \in \traceclass^+$ be a positive trace-class operator and suppose $A \in B(\Hil)$ is selfadjoint.
  Let $m := \max \essspec(A)$.
  Then
  \begin{equation*}
    \sup \ocnr(A) = m\trace C + \sup \ocnr(A-mI)_+,
  \end{equation*}
  Moreover, if $P := \chi_{[m,\infty)}(A)$ denotes the spectral projection of $A$ onto the interval $[m,\infty)$, then $\sup \ocnr(A)$ is attained if and only if $\rank C \le \trace P$.
  In fact, if $X \in \orbit(C)$ attains the supremum, then $XP = PX = X$.
\end{theorem}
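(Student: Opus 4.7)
The plan is to reduce via \Cref{prop:c-numerical-range-basics}\ref{item:similarity-preserving-ocnr} to the case $m = 0$, invoke \Cref{prop:c-numerical-range-maximum} on the compact operator $(A-mI)_+$ to obtain the upper bound, and then perform a dichotomy based on whether $\rank C$ exceeds $\trace P$ in order to establish the attainment conditions. After translation, the identity $\ocnr(A) = m \trace C + \ocnr(A - mI)$ converts the claimed formula into $\sup \ocnr(B) = \sup \ocnr(B_+)$ for $B := A - mI$, with $\max \essspec(B) = 0$. Since the positive eigenvalues of $B$ lie outside $\essspec(B) \subseteq (-\infty, 0]$, they are isolated of finite multiplicity and may accumulate only at $0$, so $B_+$ is positive compact. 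For any $X \in \orbit(C)$, positivity of the trace together with $B = B_+ - B_-$ gives $\trace(XB) \leq \trace(XB_+)$, and \Cref{prop:c-numerical-range-maximum} yields $\sup \ocnr(B) \leq \sup \ocnr(B_+) = \sum_n s_n(C) s_n(B_+)$.

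For the matching lower bound and the attainment direction I will decompose $\Hil = H_- \oplus H_0 \oplus H_+$ via the spectral projections of $B$ on $(-\infty, 0)$, $\set{0}$, $(0, \infty)$, and diagonalize $B_+$ on $H_+$ with eigenvectors $e_n$ and eigenvalues $a_1 \geq a_2 \geq \cdots > 0$. When $\rank C \leq \trace P = \dim H_+ + \dim H_0$, I will adjoin an orthonormal basis $\set{f_n}$ of $H_0$ to form an orthonormal sequence $v_1, v_2, \ldots$ in $P\Hil$ of length at least $\rank C$, and define $X_0 v_n := s_n(C) v_n$ for $n \leq \rank C$ with $X_0 = 0$ elsewhere. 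Then $\lambda(X_0) = \lambda(C)$, so $X_0 \in \orbit(C)$ by \Cref{prop:orbit-closure-equivalences}, and a direct computation yields $\trace(X_0 B) = \sum_n s_n(C) s_n(B_+)$, so the sup is attained by an operator satisfying $X_0 P = P X_0 = X_0$.

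In the opposite case $\rank C > \trace P$, necessarily $\trace P < \infty$ and in particular $\dim H_0, \dim H_+ < \infty$. Since $0 \in \essspec(B)$ rules out the alternatives of $0$ being an isolated eigenvalue of finite multiplicity, an eigenvalue of infinite multiplicity, or an accumulation point from above, the spectrum of $B$ must accumulate at $0$ from below, whence $\chi_{(-\epsilon, 0)}(B)\Hil$ is infinite dimensional for every $\epsilon > 0$. Choosing orthonormal vectors $v^n_k \in \chi_{(-1/n, 0)}(B)\Hil$ for $\trace P < k \leq \rank C$ and using them in place of the ``excess'' part of the Case~1 construction produces $X_n \in \orbit(C)$ with $\trace(X_n B) \to \sum_n s_n(C) s_n(B_+)$. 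The approach is strict, since $\sum_{k > \trace P} s_k(C) \langle Bv^n_k, v^n_k\rangle < 0$, so the sup equals the upper bound but is not attained.

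Finally, the converse direction of the attainment characterization, together with the identity $XP = PX = X$, will follow from the chain $\sup \ocnr(B_+) = \trace(XB) = \trace(XB_+) - \trace(XB_-) \leq \trace(XB_+) \leq \sup \ocnr(B_+)$, which forces $\trace(XB_-) = 0$; faithfulness of the trace on the positive operator $B_-^{1/2} X B_-^{1/2}$ then gives $X^{1/2} B_-^{1/2} = 0$ and hence $XB_- = 0$, so $X$ is supported in $\ker B_- = P\Hil$, yielding $XP = PX = X$ and $\rank C = \rank X \leq \dim P\Hil = \trace P$. The main obstacle will be the non-attainment construction, which requires combining $0 \in \essspec(B)$ with $\trace P < \infty$ to extract approximate eigenvectors $v^n_k$ with $\langle Bv^n_k, v^n_k\rangle \to 0^-$ from the spectral subspaces $\chi_{(-1/n,0)}(B)\Hil$.
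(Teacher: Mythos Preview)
Your proposal is correct and follows essentially the same approach as the paper: reduce to $m=0$, obtain the upper bound from $\trace(XB)\le\trace(XB_+)$ together with \Cref{prop:c-numerical-range-maximum}, split into the two cases $\rank C\le\trace P$ and $\rank C>\trace P$ (the latter handled by placing the tail of $s(C)$ on orthonormal vectors in $\chi_{(-\epsilon,0)}(B)\Hil$), and then derive $XP=PX=X$ for any maximizer via faithfulness of the trace applied to $B_-^{1/2}XB_-^{1/2}$. The only cosmetic differences are that the paper invokes \Cref{prop:c-numerical-range-maximum} on the subspace $P\Hil$ rather than writing down $X_0$ explicitly in Case~1, and phrases the faithfulness step as $A_-^{1/2}P^{\perp}X^{1/2}=0$ rather than $X^{1/2}B_-^{1/2}=0$; both routes are equivalent.
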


\begin{proof}
  For $A \in B(\Hil)$ and $m \in \complex$, since $\essspec(A-mI) = \essspec(A) - m$, by \Cref{prop:c-numerical-range-basics}\ref{item:similarity-preserving-ocnr}, we may assume without loss of generality that $m=0$.

  The inequality $\sup \ocnr(A) \le \sup \ocnr(A_+)$ is immediate because for any $X \in \orbit(C)$, since $A_+ - A \ge 0$, we have $\trace(X(A_+ - A)) \ge 0$ by \Cref{prop:c-numerical-range-basics}\ref{item:positivity-ocnr}.
  Therefore
  \begin{equation*}
    \trace(XA) \le \trace(XA_+) \le \sup \ocnr(A_+),
  \end{equation*}
  and taking the supremum over $X \in \orbit(C)$ yields $\sup \ocnr(A) \le \sup \ocnr(A_+)$.

  It remains to prove the reverse inequality and the claim concerning when the supremum is attained.
  We begin by proving the former.

  Now, if $\rank C \le \trace P$, there is some $C' \in \orbit(C)$ such that $PC' = C'P = C'$ by \Cref{prop:orbit-closure-equivalences} (e.g., take $C' := \zop_{P^{\perp}\Hil} \oplus \diag_{P\Hil} \big(s_n(C)\big)_{n=1}^{\trace P}$).
  Since $C',A_+$ are positive compact operators which are zero on $P^{\perp} \Hil$, we may view them as operators acting on $P\Hil$.
  By \Cref{prop:c-numerical-range-maximum}, there is some $X' \in \orbit_{P\Hil}(C')$ for which
  \begin{equation*}
    \trace_{P\Hil}(X' A_+) = \sup \cnr[\orbit_{P\Hil}(C')](A_+) = \sum_{n=1}^{\infty} s_n(C') s_n(A_+) = \sum_{n=1}^{\infty} s_n(C) s_n(A_+) = \sup \ocnr(A_+).
  \end{equation*}
  Then setting $X := \zop_{P^{\perp}\Hil} \oplus X' \in \orbit(C)$ we find that $\trace(XA) = \trace_{P\Hil}(X'A_+) = \sup \ocnr(A_+)$ which we already established is at least $\sup \ocnr(A)$.
  Moreover, notice that $\sup \ocnr(A)$ is attained in this case.

  Now suppose $\rank C > \trace P$.
  For $\epsilon > 0$, let $P_{\epsilon} := \chi_{(-\epsilon,0)}(A)$.
  Since $\rank C > \trace P$, we know that $P$ is a finite projection.
  But since $0 \in \essspec(A)$, we must have that $P_{\epsilon} + P = \chi_{(-\epsilon,\infty)}(A)$ is infinite for every $\epsilon > 0$, and hence $P_{\epsilon}$ is infinite.
  Then consider a basis $\mathfrak{e} = \set{e_n}_{n \in \ints}$ such that for $1 \le n \le \trace P$, $A e_n = s_n(A_+) e_n$, and for which $\set{e_n}_{n=\trace P + 1}^{\infty}$ is an orthonormal set in $P_{\epsilon}\Hil$.
  Define $X \in \orbit(C)$ to be the diagonal operator $X e_n = s_n(C) e_n$ for $n \in \nats$ and $X e_n = 0$ for $n < 0$.
  Then by construction and since $s_n(A_+) = 0$ for $n > \trace P$, we find
  \begin{align*}
    \trace(XA) = \trace(XAP) + \trace(XAP_{\epsilon}) &\ge \sum_{n=1}^{\trace P} s_n(C) s_n(A_+) - \norm{X}_1 \norm{AP_{\epsilon}} \\
                                                      &\ge \sum_{n=1}^{\infty} s_n(C) s_n(A_+) - \epsilon \trace C. \\
                                                      &= \ocnr(A_+) - \epsilon \trace C.
  \end{align*}
  Since $\epsilon$ was arbitrary, this proves $\sup \ocnr(A) \ge \sup \ocnr(A_+)$, and thus we have equality.

  Suppose $X \in \orbit(C)$ attains the supremum, that is, $\trace(XA) = \sup \ocnr(A)$.
  As we have just proved that $\ocnr(A) = \ocnr(A_+)$, so then $\trace(XA) = \sup \ocnr(A_+)$.
  Moreover, as $PA = A_+$ and $P^{\perp} A = -A_-$,
  \begin{equation}
    \label{eq:2}
    \begin{aligned}
      \trace(XA) &= \trace(XPA) + \trace(XP^{\perp}A) \\
      &= \trace(XA_+) - \trace(XP^{\perp}A_-) \\
      &\le \left( \sup \ocnr(A_+) \right) - \trace(XP^{\perp}A_-) \\
      &= \trace(XA) - \trace(XA_-).
    \end{aligned}
  \end{equation}
  Since, $\trace(XA_-) = \trace(XP^{\perp}A_-) = \trace(X^{\frac{1}{2}} P^{\perp} A_- P^{\perp} X^{\frac{1}{2}}) \ge 0$, equality in \eqref{eq:2} holds if and only if $X^{\frac{1}{2}} P^{\perp} A_- P^{\perp} X^{\frac{1}{2}} = 0$ since the trace is faithful, if and only if $A^{\frac{1}{2}}_- P^{\perp} X^{\frac{1}{2}} = 0$.

  Now because $P^{\perp}$ is the spectral projection of $A$ on the interval $(-\infty,0)$, we see that $A_-^{\frac{1}{2}}$ is strictly positive on $P^{\perp}\Hil$ (or $P^{\perp} = 0$).
  Therefore, $A^{\frac{1}{2}}_- P^{\perp} X^{\frac{1}{2}} = 0$ if and only if $P^{\perp} X^{\frac{1}{2}} = 0$ if and only if $R_X = R_{X^{\smash[t]{\frac{1}{2}}}} \le P$ ($R_X$ denotes the range projection of $X$) if and only if $PX=XP=X$.
  This proves the claim about $X \in \orbit(C)$ which attain the supremum.

  Finally, if $\rank C > \trace P$, then for any $X \in \orbit(C)$, $XP \not= X$ and so by the above, $X$ does not attain the supremum.
  Since $X$ was arbitrary, the supremum cannot be attained in this case.
\end{proof}

The following example shows how the techniques developed thus far can be used to compute the orbit-closed $C$-numerical range in certain circumstances.

\begin{example}
  Let $S$ denote the shift operator on either $\ell^2(\nats)$ or $\ell^2(\ints)$.
  It is well known that the standard numerical range is $W(S) = \mathbb{D}$, the open unit disk.
  Let $C \in \traceclass^+$ with $\trace C = \norm{C}_1 = 1$.
  We will show $\ocnr(S) = \mathbb{D}$ also.

  Notice first that $\lambda(C) \maj \lambda(P)$ where $P$ is a rank-$1$ projection, so by \Cref{cor:majorization-c-numerical-range-inclusion}, $\ocnr(S) \subseteq \ocnr[P](S) = W(S) = \mathbb{D}$.
  Moreover, $S$ is unitarily equivalent to $e^{i\theta} S$ via the diagonal unitary $U_{\theta} := \diag(e^{in\theta})$.
  Therefore, since the orbit-closed $C$-numerical range is unitarily invariant and using \Cref{prop:c-numerical-range-basics}\ref{item:similarity-preserving-ocnr},  $\ocnr(S) = \ocnr(e^{i\theta}S) = e^{i\theta}\ocnr(S)$ and so $\ocnr(S)$ is radially symmetric.
  Because $\max \spec(\Re S) = \max \essspec(\Re S) = 1$, we know $(\Re S - I)_+ = 0$, and therefore by \Cref{prop:c-numerical-range-basics}\ref{item:hermitian-ocnr} and \Cref{thm:c-numerical-range-selfadjoint-formula},
  \begin{equation*}
    \sup \Re \ocnr(S) = \sup \ocnr(\Re S) = \trace C + \sup \ocnr(\Re S - I)_+ = \trace C = 1.
  \end{equation*}
  Consequently, by the radial symmetry and convexity (using \Cref{cor:c-numerical-range-convex}) of $\ocnr(S)$ it must contain the open unit disk.
  Therefore $\ocnr(S) = \mathbb{D}$.

  We note that $\cnr(S)$ must be dense in $\mathbb{D}$ by \Cref{thm:cnr-dense-in-ocnr}, but it seems rather hard to conclude these sets are equal without convexity.
\end{example}

We now build towards \Cref{thm:ocnr-closed-rank-condition} which provides a sufficient condition for $\ocnr(A)$ to be closed for $A \in B(\Hil)$.
We begin with a bootstrapping of a standard result by induction.

\begin{lemma}
  \label{lem:close-projections-unitary-close-to-1}
  Given $\epsilon > 0$ there is some $\delta > 0$ such that whenever $\set{P_j}_{j=1}^N, \set{Q_j}_{j=1}^N$ are each collections of mutually orthogonal projections with $\norm{P_j - Q_j} < \delta$ for each $1 \le j \le N$, then there is a unitary $U$ conjugating each pair $P_j,Q_j$ such that $\norm{U-I} < \epsilon$.
\end{lemma}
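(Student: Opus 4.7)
The strategy is induction on $N$, as hinted by the paper. The base case $N=1$ is classical: for projections $P,Q$ with $\norm{P-Q}<1$, the operator $Z := QP + (I-Q)(I-P)$ is invertible with $\norm{Z-I}\le 2\norm{P-Q}$ (a short direct computation, using $I = P + (I-P)$), it satisfies $ZP = QZ$ (again a direct check using $P^2=P$ and $Q^2 = Q$), and the polar unitary $U := Z(Z^{*}Z)^{-1/2}$ satisfies $UPU^{*} = Q$ (because $P$ commutes with $Z^{*}Z$, which one sees by combining $ZP=QZ$ with its adjoint). Moreover, $\norm{U-I} \to 0$ as $\norm{P-Q} \to 0$, so there is a function $\phi(\delta)\to 0$ as $\delta\to 0^+$ such that $\norm{P-Q}<\delta$ yields a unitary $U$ with $UPU^{*}=Q$ and $\norm{U-I}<\phi(\delta)$.

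For the inductive step with tolerance $\epsilon$, I would first fix $\eta < \epsilon/2$ (to be sharpened below) and use the base case to produce a threshold $\delta_{1}$ with $\phi(\delta_1) < \eta$. The inductive hypothesis applied in $B((I-Q_1)\Hil)$ with tolerance $\epsilon/2$ supplies a threshold $\delta_{N-1}$ for collections of $N-1$ pairs. I would then choose $\eta$ small enough that $2\eta < \delta_{N-1}$ and set $\delta := \min\{\delta_1,\ \delta_{N-1}-2\eta\}$.

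Given pairs with $\norm{P_j-Q_j}<\delta$, apply the base case to $(P_1,Q_1)$ to obtain a unitary $U_1$ with $U_1 P_1 U_1^{*} = Q_1$ and $\norm{U_1-I}<\eta$. Set $P_j' := U_1 P_j U_1^{*}$ for $j\ge 2$. These projections are mutually orthogonal (unitary conjugation preserves orthogonality) and all orthogonal to $Q_1 = U_1 P_1 U_1^{*}$, so $P_j', Q_j \in B((I-Q_1)\Hil)$, as are the $Q_j$ for $j\ge 2$. Moreover,
\begin{equation*}
  \norm{P_j' - Q_j} \le \norm{U_1 P_j U_1^{*} - P_j} + \norm{P_j - Q_j} \le 2\norm{U_1-I} + \delta < 2\eta + \delta \le \delta_{N-1}.
\end{equation*}
Apply the inductive hypothesis inside $B((I-Q_1)\Hil)$ to the pairs $\{(P_j',Q_j)\}_{j=2}^N$ to obtain a unitary $V_0$ on $(I-Q_1)\Hil$ with $V_0 P_j' V_0^{*} = Q_j$ for each $j\ge 2$ and $\norm{V_0 - I}<\epsilon/2$. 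Extend by the identity on $Q_1\Hil$ to a unitary $V := V_0 \oplus I_{Q_1\Hil}$ on $\Hil$, which in particular fixes $Q_1$. Finally define $U := V U_1$. Then $UP_1 U^{*} = V Q_1 V^{*} = Q_1$, and for $j\ge 2$, $UP_j U^{*} = V P_j' V^{*} = Q_j$, while
\begin{equation*}
  \norm{U - I} \le \norm{V(U_1 - I)} + \norm{V - I} < \eta + \epsilon/2 \le \epsilon.
\end{equation*}

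The only nontrivial ingredient is the base case (the existence of a near-identity unitary implementing a single pair), which is classical via the formula above. The remainder is bookkeeping: restricting to $(I-Q_1)\Hil$, checking that conjugation by $U_1$ only slightly perturbs the remaining projections, and controlling $\norm{U-I}$ via the triangle inequality after multiplying the two unitaries. No obstacle beyond choosing the thresholds in the correct order is anticipated.
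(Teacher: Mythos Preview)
Your proposal is correct and follows essentially the same inductive approach as the paper: peel off one pair using the classical near-identity intertwining unitary obtained from the polar decomposition of $QP+(I-Q)(I-P)$, apply the inductive hypothesis to the remaining pairs inside the orthogonal complement of the already-matched projection, and compose the two unitaries with a triangle-inequality estimate on $\norm{U-I}$. The only cosmetic differences are that the paper peels off the last pair rather than the first and writes the base-case operator as $P_1Q_1+(I-P_1)(I-Q_1)$ (the adjoint of your $Z$); the threshold bookkeeping is arranged slightly differently but is equivalent.
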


\begin{proof}
  We proceed by induction on $N$.
  The case when $N=1$ is standard, but a good reference is \cite[II.3.3.4]{Bla-2006}.
  The argument is essentially this: set $Z := P_1 Q_1 + (1-P_1)(1-Q_1)$, then $Z$ is invertible and $U = Z\abs{Z}^{-1}$ is the desired unitary.

  Now let $N \in \nats$ and suppose the result holds for pairs of collections of mutually orthogonal projections of length at most $N$.
  Let $\epsilon > 0$, then there is some $\delta > 0$ corresponding to $\frac{\epsilon}{2}$ by the inductive hypothesis.
  Moreover, there is some $\eta > 0$ corresponding to $\min\set{\frac{\delta}{3},\frac{\epsilon}{2}}$.
  Suppose that $\set{P_j}_{j=1}^{N+1}, \set{Q_j}_{j=1}^{N+1}$ are each collections of mutually orthogonal projections with $\norm{P_j - Q_j} < \min\set{\eta,\frac{\delta}{3}}$.

  Then there is a unitary $U'$ with $\norm{U' - I} < \min\set{\frac{\delta}{3},\frac{\epsilon}{2}}$ conjugating $P_{N+1}$ to $Q_{N+1}$.
  Then $U'$ also conjugates $\set{P_j}_{j=1}^N$ to a mutually orthogonal collection $\set{P'_j}_{j=1}^N$.
  Moreover,
  \begin{equation*}
    \norm{P'_j - Q_j} = \norm{U' P_j U'^{*} - Q_j} \le \norm{U'-I} + \norm{U'^{*} - I} + \norm{P_j - Q_j} \le 2\min\vset{\frac{\delta}{3},\frac{\epsilon}{2}} + \norm{P_j - Q_j} < \delta.
  \end{equation*}

  Then let $V$ be a unitary conjugating $\set{P'_j}_{j=1}^N$ to $\set{Q_j}_{j=1}^N$ inside the Hilbert space $Q_{N+1}^{\perp} \Hil$ such that $\norm{V - I_{Q_{N+1}^{\perp} \Hil}} < \frac{\epsilon}{2}$.
  Then $W := I_{Q_{N+1}\Hil} \oplus V$ is a unitary on $\Hil$ and $\norm{W - I} < \frac{\epsilon}{2}$.
  Finally, set $U = WU'$ and notice that $U$ conjugates $\set{P_j}_{j=1}^{N+1}$ to $\set{Q_j}_{j=1}^{N+1}$.
  Moreover,
  \begin{equation*}
    \norm{U-I} \le \norm{W-I} + \norm{U'-I} < \frac{\epsilon}{2} + \min\vset{\frac{\delta}{3},\frac{\epsilon}{2}} \le \epsilon.
  \end{equation*}
  By the induction, the proof is complete.
\end{proof}

Using \Cref{lem:close-projections-unitary-close-to-1} we now establish a sufficient condition for when certain points on the boundary $\partial\ocnr(A)$ can be obtained by elements of $\orbit(C)$ which are close in trace norm.
This approximation result is a key step in the proof of \Cref{thm:ocnr-closed-rank-condition}.

\begin{proposition}
  \label{prop:boundary-approximation}
  Let $C \in \traceclass^+$ and suppose that $\rank (\Re A-mI)_+ \ge \rank C$, where $m := \max \essspec(\Re A)$.
  Let $[x_-,x_+]$ denote the (possibly degenerate) line segment on $\partial\ocnr(A)$ consisting of the points with maximal real part.

  Furthermore, suppose that there are arbitrarily small $\theta > 0$ for which there is a point $x_{\theta} \in \ocnr(A)$ on its boundary whose supporting line intersects $\ocnr(A)$ only at this point $x_{\theta}$, and that $x_{\theta} \to x_-$ as $\theta \to 0$.
  Then given any $\epsilon > 0$, for sufficiently small $\theta$ there are some $X_{\theta}, X \in \orbit(C)$ with $\trace(X_{\theta}A) = x_{\theta}$ and $\Re\trace(XA) = \sup \Re\ocnr(A)$, and $\norm{X_{\theta} - X}_1 < \epsilon$.

  Consequently, $\ocnr(A)$ contains points on the line segment arbitrarily close to $x_-$.
\end{proposition}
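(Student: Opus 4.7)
The argument follows the standard rotation and real-part technique. For each small $\theta > 0$ in the hypothesis, there is a rotation angle $\phi = \phi(\theta) \to 0$ such that $e^{i\phi} x_\theta$ is the unique point of maximal real part in $\ocnr(e^{i\phi}A)$. Since $\Re(e^{i\phi}A) = \cos\phi \cdot \Re A - \sin\phi \cdot \Im A$ converges in operator norm to $\Re A$ as $\theta \to 0$ and the eigenvalues of $\Re A$ strictly above $m$ are isolated from its essential spectrum, the hypothesis $\rank(\Re A - mI)_+ \ge \rank C$ transfers to $\rank(\Re(e^{i\phi}A) - m_\phi I)_+ \ge \rank C$ for $\theta$ small enough, where $m_\phi := \max\essspec(\Re(e^{i\phi}A))$. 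Thus \Cref{thm:c-numerical-range-selfadjoint-formula} applies to both operators, yielding $X, X_\theta \in \orbit(C)$ with $\Re\trace(XA) = \sup\Re\ocnr(A)$ and $\trace(X_\theta A) = x_\theta$ (the latter because the maximum of $\Re\trace(\,\cdot\,(e^{i\phi}A))$ over $\orbit(C)$ is attained uniquely at $e^{i\phi}x_\theta$). By the proof of \Cref{prop:c-numerical-range-maximum}, $X$ and $X_\theta$ may be taken in the form $X e_n = s_n(C) e_n$ and $X_\theta e_n^\theta = s_n(C) e_n^\theta$ (vanishing on orthogonal complements), where $\{e_n\}$ is an orthonormal sequence of eigenvectors of $\Re A$ for its eigenvalues $\lambda_1 \ge \lambda_2 \ge \cdots$ in $(m,\infty)$, and $\{e_n^\theta\}$ is the analogous sequence for $\Re(e^{i\phi}A)$.

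Given $\epsilon > 0$, I choose $N$ so that $2\sum_{n > N} s_n(C) < \epsilon/2$, enlarging if necessary so that $\lambda_N > \lambda_{N+1}$ (possible since the $\lambda_n$ can accumulate only at $m$). Standard perturbation theory (Riesz projections along contours enclosing each distinct eigenvalue in $\{\lambda_1,\ldots,\lambda_N\}$) shows the corresponding spectral projections of $\Re(e^{i\phi}A)$ converge in operator norm to those of $\Re A$ as $\theta \to 0$. Applying \Cref{lem:close-projections-unitary-close-to-1} to these mutually orthogonal collections produces unitaries $U_\theta \to I$ in operator norm that simultaneously conjugate one collection to the other. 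Now I \emph{first} fix the eigenbasis $\{e_n^\theta\}_{n=1}^N$ of $\Re(e^{i\phi}A)$ on its top $N$ eigenvalues, and \emph{then} define $e_n := U_\theta^* e_n^\theta$; these lie in the matching spectral subspaces of $\Re A$, so form an orthonormal basis of eigenvectors of $\Re A$ (any orthonormal basis within a multiplicity eigenspace is admissible), yielding a valid $X$. Since $\norm{e_n - e_n^\theta} \le \norm{U_\theta^* - I} \to 0$ and each rank-one-projection difference has trace norm at most $2\norm{e_n - e_n^\theta}$,
\begin{equation*}
  \norm{X - X_\theta}_1 \le 2\sum_{n=1}^N s_n(C)\, \norm{e_n - e_n^\theta} + 2\sum_{n > N} s_n(C) < \epsilon
\end{equation*}
for $\theta$ sufficiently small.

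The main technical obstacle is handling eigenvalue multiplicities: for $\theta > 0$, a multiple eigenvalue of $\Re A$ typically splits into a cluster of distinct eigenvalues of $\Re(e^{i\phi}A)$, so the vectors $e_n^\theta$ are essentially forced by $\Re(e^{i\phi}A)$. This is resolved by choosing $\{e_n\}$ after $\{e_n^\theta\}$, exploiting the freedom to pick any orthonormal basis within each multiplicity eigenspace of $\Re A$ without affecting the defining property $\Re\trace(XA) = \sup\Re\ocnr(A)$ of $X$. For the final consequence, $\trace(XA)$ lies in $\ocnr(A)$ with maximal real part, hence on $[x_-, x_+]$, while $\abs{\trace(XA) - x_\theta} = \abs{\trace((X - X_\theta)A)} \le \norm{X - X_\theta}_1 \norm{A} < \epsilon\norm{A}$; since $x_\theta \to x_-$, the points $\trace(XA) \in \ocnr(A) \cap [x_-, x_+]$ can be made arbitrarily close to $x_-$.
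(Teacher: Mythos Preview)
Your proof is correct and follows essentially the same route as the paper's: fix $N$ so the tail of $s(C)$ is negligible, use norm-continuity of the top spectral projections together with \Cref{lem:close-projections-unitary-close-to-1} to align the eigenbases of $\Re A$ and $\Re(e^{i\phi}A)$, then estimate $\norm{X - X_\theta}_1$ (your rank-one bound $\norm{e_n\otimes e_n - e_n^\theta\otimes e_n^\theta}_1 \le 2\norm{e_n - e_n^\theta}$ is a bit cleaner than the paper's $PXP$ versus $QX_\theta Q$ block decomposition, and your explicit remark about choosing $\{e_n\}$ \emph{after} $\{e_n^\theta\}$ to absorb multiplicity splitting is exactly what the paper encodes via the auxiliary unitary $V$). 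One small caveat: your perturbation argument that the rank condition transfers to $\Re(e^{i\phi}A)$ is only straightforward when $\rank C < \infty$, but this is harmless since the attainment of $\sup\Re\ocnr(e^{i\phi}A)$---and hence the diagonal form of $X_\theta$ on the top eigenspaces---already follows directly from the hypothesis $x_\theta \in \ocnr(A)$ via \Cref{thm:c-numerical-range-selfadjoint-formula}.
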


\begin{proof}
  By translating, we may clearly suppose that $m = 0$.
  Define for each $\theta \in \reals$ the selfadjoint operator $A_{\theta} := \Re(e^{i\theta} A)$.

  Let $\epsilon > 0$.
  Since $C \in \traceclass^+$, there is some $N \in \nats$ such that $\sum_{n=N+1}^{\infty} s_n(C) < \frac{\epsilon}{8\norm{A}}$;
  if $\rank C < \infty$, set $N := \rank C$.
  Let $\lambda_1 > \cdots > \lambda_m > 0$ be the $m$ largest eigenvalues of $A_0 = \Re A$ with associated (mutually orthogonal) spectral projections $P_j := \chi_{\set{\lambda_j}}(A_0)$ for $1 \le j \le m$.
  Choose $m$ so that $\sum_{j=1}^m \trace(P_j) \ge N$, which is possible since $\rank (A_0)_+ \ge \rank C$ by hypothesis.
  Set $P := \sum_{j=1}^m P_j$ and define $n_j := \sum_{i=1}^j \trace P_i$ and $n_0 := 0$.
  We remark for future reference that $s_k((A_0)_+) = \lambda_j$ when $n_{j-1} < k \le n_j$.

  Notice that
  \begin{equation*}
    A_{\theta} = \Re(e^{i\theta} A) = (\cos\theta) \Re A + (\sin\theta) \Im A = A_0 + B_{\theta},
  \end{equation*}
  where $B_{\theta} := (\cos\theta - 1) \Re A + (\sin\theta) \Im A$ and that $\norm{B_{\theta}} \le 2\theta \norm{A}$.
  Set
  \begin{equation*}
    \delta_1 = \frac{1}{4} \min_{1 \le j \le m} \dist(\lambda_j, \spec(A_0) \setminus \set{\lambda_j}).
  \end{equation*}
  By the upper semicontinuity of the spectrum (and the essential spectrum), for all sufficiently small $\theta > 0$ we can guarantee that $m_{\theta} := \max \essspec(A_{\theta}) < \lambda_m - \delta_1$ and that $\spec(A_{\theta})$ is contained in the $\delta_1$-neighborhood of $\spec(A_0)$.
  
  By \Cref{lem:close-projections-unitary-close-to-1}, there is some $\delta > 0$ associated to $\frac{\epsilon}{8 \norm{A} \trace C}$.
  Then we may choose $\theta > 0$ small enough so that both $\abs{x_{\theta} - x_-} < \frac{\epsilon}{2}$ and $\norm{B_{\theta}}$ is small enough \cite[Theorem~3.4]{MS-2015-JRAM}\footnote{This result is actually much stronger than we need because it provides tight bounds on the required size of the norm $\norm{B_{\theta}}$. For our purposes, the result we need could be obtained by straightforward, albeit somewhat tedious, arguments using the continuous functional calculus.} that if $Q_j := \chi_{[\lambda_j-\delta_1,\lambda_j+\delta_1]}(A_{\theta})$, then $\norm{P_j - Q_j} < \delta$.
  Moreover, let $Q := \sum_{j=1}^m Q_j$.
  By \Cref{lem:close-projections-unitary-close-to-1} there is a unitary $U$ with $\norm{U-I} < \frac{\epsilon}{8 \norm{A} \trace C}$ conjugating $Q_j$ to $P_j$ (i.e., $U Q_j U^{*} = P_j$) for each $1 \le j \le m$.

  Let $\mathfrak{e} := \set{e_k}_{k \in \ints}$ be an orthonormal basis so that for $1 \le k \le \max \set{\rank C, \trace Q}$ (note: $\trace Q = \trace P$), $e_k$ is an eigenvector of $A_{\theta}$ for the eigenvalue $m_{\theta} + s_k((A_{\theta} - m_{\theta}I)_+)$;
  this is possible since by hypothesis $\rank (A_{\theta} - m_{\theta}I)_+ \ge \rank C$, and also $\chi_{(m_{\theta},\infty)}(A_{\theta}) \ge Q$ so $\rank (A_{\theta} - m_{\theta}I)_+ \ge \trace Q$.
  The eigenvectors $\set{e_k}_{k=1}^{\trace Q}$ are in the subspaces $Q_j \Hil$.
  More specifically, $\set{e_k}_{k=n_{j-1} + 1}^{n_j}$ is a basis for $Q_j \Hil$.
  Consequently, $\set{U e_k}_{k=n_{j-1} + 1}^{n_j}$ is a basis for $P_j \Hil$ since $U$ conjugates $Q_j$ to $P_j$.
  Therefore, for $n_{j-1} < k \le n_j$, we have $A_0 U e_k = \lambda_j U e_k$.
  So, these are eigenvectors for $A_0$.

  Now let $\mathfrak{f} := \set{f_k}_{k \in \ints}$ be an orthonormal basis for which $A_0 f_k = s_k((A_0)_+) f_k$ when $1 \le k \le \max \set{\rank C, \trace P}$ (note: $\trace P = \trace Q$);
  again, this is possible since $\rank (A_0)_+ \ge \rank C$, and because $\chi_{(0,\infty)}(A_0) \ge P$, so $\rank(A_0)_+ \ge \trace P$.
  By the previous paragraph we may select $f_k = U e_k$ for $1 \le k \le \trace Q = \trace P$.
  Let $V$ be the unitary which maps $U e_k$ to $f_k$ for all $k \in \ints$.
  Notice that $PV = VP = P$ since $P\Hil = \spans \set{f_k}_{k=1}^{\trace P}$ and $V$ acts as the identity here since $Ue_k = f_k$ for $1 \le k \le \trace P$.

  Define $X_{\theta}$ to be the operator which is diagonal with respect to the basis $\mathfrak{e}$ such that $X_{\theta} e_k = s_k(C) e_k$ for $1 \le k \le \rank C$ and $X_{\theta} e_k = 0$ for all other values of $k$.
  Clearly $X_{\theta} \in \orbit(C)$ by \Cref{prop:orbit-closure-equivalences}.
  Moreover, notice that
  \begin{align*}
    \trace(X_{\theta} A_{\theta}) &= \sum_{k=1}^{\rank C} s_k(C) \big( m_{\theta}  + s_k((A_{\theta}-m_{\theta}I)_+) \big) \\
                                  &= m_{\theta} \trace C + \sum_{k=1}^{\infty} s_k(C) s_k((A_{\theta}-m_{\theta}I)_+) \\
                                  &= m_{\theta} \trace C + \sup \ocnr \big( (A_{\theta} - m_{\theta} I)_+ \big) \\
                                  &= \sup \ocnr(A_{\theta}) = \sup \Re(\ocnr(e^{i\theta}A)).
  \end{align*}
  Then since $\Re \trace(X_{\theta} e^{i\theta} A) = \trace(X_{\theta} A_{\theta})$ maximizes $\Re(\ocnr(e^{i\theta}A))$, and because the supporting line for $x_{\theta}$ intersects the boundary only at that point, we must have $\trace(X_{\theta} A) = x_{\theta}$.

  Now define $X := (VU)X_{\theta}(VU)^{*} \in \orbit(C)$.
  Since $VU$ maps the basis $\mathfrak{e}$ onto the basis $\mathfrak{f}$, we see that $X$ is diagonal with respect to the basis $\mathfrak{f}$.
  Moreover,
  \begin{equation*}
    \trace(X A_0) = \sum_{k=1}^{\rank C} s_k(C) s_k((A_0)_+) = \sum_{k=1}^{\infty} s_k(C) s_k((A_0)_+) = \sup \ocnr(A_0) = \sup \Re(\ocnr(A)).
  \end{equation*}
  Since $\Re \trace(XA) = \trace(XA_0)$, this entails $\trace(XA) \in [x_-,x_+]$.

  We now estimate the trace norm of $X - X_{\theta}$.
  Since $P,X$ (or $Q,X_{\theta}$) are diagonal with respect to the basis $\mathfrak{f}$ (or $\mathfrak{e}$) and therefore commute, we have
  \begin{equation*}
    X - X_{\theta} = PXP - QX_{\theta}Q + P^{\perp}XP^{\perp} - Q^{\perp}X_{\theta}Q^{\perp}.
  \end{equation*}
  Additionally, since $U$ conjugates $Q$ to $P$, we know $UQU^{*} = P$ and so $QU^{*} = U^{*}P$ and $UQ = PU$.
  In addition, $PV = VP = P$, and combining these we obtain
  \begin{align*}
    PXP - QX_{\theta}Q &= PXP - QU^{*}V^{*}XVUQ \\
                       &= PXP - U^{*}PV^{*} XVPU \\
                       &= PXP - U^{*}PXPU \\
                       &= PX - U^{*}PX + U^{*}PX - U^{*}PXPU.
  \end{align*}
  Combining these we find
  \begin{align*}
    \norm{X-X_{\theta}}_1 &\le \norm{PXP - QX_{\theta}Q + P^{\perp}XP^{\perp} - Q^{\perp}X_{\theta}Q^{\perp}}_1 \\
                          &\le \norm{PX - U^{*}PX}_1 + \norm{U^{*}PX - U^{*}PXPU}_1 + \norm{P^{\perp}XP^{\perp}}_1 + \norm{Q^{\perp}X_{\theta}Q^{\perp}}_1 \\
                          &\le \norm{P-U^{*}P} \norm{X}_1 + \norm{U^{*}P} \norm{X}_1 \norm{P - PU} + \trace(P^{\perp}XP^{\perp}) + \trace(Q^{\perp}X_{\theta}Q^{\perp}) \\
                          &\le \frac{\epsilon}{8 \norm{A} \trace C} \trace C + \frac{\epsilon}{8 \norm{A} \trace C} \trace C + \sum_{n=1+\trace P}^{\infty} s_n(C) + \sum_{n=1+\trace Q}^{\infty} s_n(C) \\
                          &\le \frac{\epsilon}{4\norm{A}} + 2 \sum_{n=N+1}^{\infty} s_n(C) \\
                          &< \frac{\epsilon}{2\norm{A}}.
  \end{align*}
  Therefore,
  \begin{equation*}
    \abs{\trace(XA) - x_-} \le \abs{\trace(XA) - \trace(X_{\theta}A)} + \abs{x_{\theta} - x_-} < \norm{X-X_{\theta}}_1 \norm{A} + \frac{\epsilon}{2} < \epsilon.
  \end{equation*}
  Since $\trace(XA) \in [x_-,x_+]$ and $\epsilon > 0$ was arbitrary, we conclude that $\ocnr(A)$ contains points on $[x_-,x_+]$ which are arbitrarily close to $x_-$.
\end{proof}

We are almost ready to provide a sufficient condition for $\ocnr(A)$ to be closed when $C \in \traceclass^+$ and $A \in B(\Hil)$, but before we proceed we need two more technical results concerning majorization, spectral projections and operators which maximize $\ocnr(A)$ for $A$ selfadjoint.
\Cref{lem:extreme-points-k-numerical-range} concerns, in essence, the properties of projections which maximize the $k$-numerical range.
Then \Cref{prop:block-diagonal-decomposition} bootstraps \Cref{lem:extreme-points-k-numerical-range} to conclude that a maximizer of the orbit-closed $C$-numerical range has a certain block diagonal decomposition.

\begin{lemma}
  \label{lem:extreme-points-k-numerical-range}
  Let $X$ be a positive compact operator and $P$ a rank-$N$ projection.
  If 
  \begin{equation*}
    \trace (P X) = \sup \ocnr[P](X) = \sum_{n=1}^N s_n(X),
  \end{equation*}
  and $Q := \chi_{[s_N(X),\infty)}(X)$, then $P \le Q$ and $Q - P \le \chi_{\set{s_N(X)}}(X)$.
  Consequently, $X$ commutes with $P$.
\end{lemma}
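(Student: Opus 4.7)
The plan is to set $\lambda := s_N(X)$ and reduce the problem to a positivity-and-faithfulness argument about $X - \lambda I$. First I would compute, using cyclicity of the trace,
\begin{equation*}
  \trace\bigl((X - \lambda I)P\bigr) = \trace(PXP) - \lambda\trace(P) = \sum_{n=1}^{N} s_n(X) - N\lambda = \sum_{n=1}^{N}\bigl(s_n(X) - \lambda\bigr) = \trace\bigl((X-\lambda I)_+\bigr),
\end{equation*}
where the last equality is the spectral identity $\trace((X - \lambda I)_+) = \sum_{n} (s_n(X) - \lambda)_+$ together with the fact that $s_n(X) \ge \lambda$ for $n \le N$ (so negative terms contribute $0$), and that the strict differences $s_n(X) > \lambda$ for $n \le N_0$ match those appearing in $(X - \lambda I)_+$.

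Next I would decompose $X - \lambda I = (X-\lambda I)_+ - (X-\lambda I)_-$ and invoke two elementary bounds against $P$. The first is
\begin{equation*}
  \trace\bigl((X - \lambda I)_+ P\bigr) \le \trace\bigl((X - \lambda I)_+\bigr),
\end{equation*}
which follows from splitting $\trace((X-\lambda I)_+) = \trace(P(X-\lambda I)_+P) + \trace((I-P)(X-\lambda I)_+(I-P))$ and observing both summands are nonnegative. The second is $\trace((X-\lambda I)_- P) = \trace(P^{1/2}(X-\lambda I)_- P^{1/2}) \ge 0$. The trace identity just derived forces equality in both. Tightness of the first bound yields $(I-P)(X-\lambda I)_+(I-P) = 0$ by faithfulness of the trace on positives; taking square roots gives $(X-\lambda I)_+^{1/2}(I-P) = 0$, so $I - P \le \chi_{(-\infty,\lambda]}(X)$, equivalently $P \ge Q_0 := \chi_{(\lambda,\infty)}(X)$. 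Tightness of the second bound gives analogously $(X-\lambda I)_-^{1/2} P = 0$, so $P \le \chi_{[\lambda,\infty)}(X) = Q$.

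From $Q_0 \le P \le Q$ the first two conclusions follow at once: $P \le Q$, and $Q - P \le Q - Q_0 = \chi_{\{\lambda\}}(X)$. For the commutation assertion, write $P = Q_0 + (P - Q_0)$; the first summand is a spectral projection of $X$, and the second is a subprojection of $\chi_{\{\lambda\}}(X)$ on which $X$ acts as scalar multiplication by $\lambda$, so both commute with $X$. I do not expect a genuine obstacle here; the main point requiring care is the multiplicity accounting at eigenvalue $\lambda$, and the edge case $\lambda = 0$, in which $(X - \lambda I)_- = 0$ makes the second tightness condition vacuous while the first still forces $I - P \le \chi_{\{0\}}(X)$ as required.
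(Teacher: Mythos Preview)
Your argument is correct and is genuinely different from the paper's. The paper treats the case $s_N(X)=0$ by hand, then for $s_N(X)>0$ enumerates the distinct eigenvalues $\lambda_1>\cdots>\lambda_m=s_N(X)$ with spectral projections $Q_j$, shows the finite sequence $(\trace(PQ_j))_{j=1}^{m+1}$ is majorized by the ``target'' sequence, and then invokes the summation-by-parts \Cref{lem:majorization-product-decreasing-sequence} to extract from the single equality $\sum_j \lambda_j\,\delta_j=0$ that each $\delta_j=0$, yielding $P\le Q$ and $Q_j\le P$ for $j<m$. Your proof bypasses all of that by shifting: the identity $\trace\bigl((X-\lambda I)P\bigr)=\trace\bigl((X-\lambda I)_+\bigr)$ together with the two trivial bounds $\trace((X-\lambda I)_+P)\le\trace((X-\lambda I)_+)$ and $\trace((X-\lambda I)_-P)\ge 0$ immediately forces both to be tight, and faithfulness of the trace on the finite-rank operators $(I-P)(X-\lambda I)_+(I-P)$ and $P(X-\lambda I)_-P$ gives $Q_0\le P\le Q$ directly. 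The paper's route fits its broader theme of majorization and reuses \Cref{lem:majorization-product-decreasing-sequence}; your route is shorter, handles $\lambda=0$ uniformly, and needs no auxiliary lemma. One cosmetic point: in your first display you write $\trace(PXP)$ where $\trace(PX)$ is what the hypothesis actually gives (these coincide, of course).
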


\begin{proof}
  In the case when $s_N(X) = 0$, then $Q = I \ge P$ and $Q - P = I - P = P^{\perp}$.
  Therefore $\trace(P^{\perp} X P^{\perp}) = \trace(P^{\perp} X) = \trace X - \trace(P X) = 0$.
  Since the trace is faithful, this implies $P^{\perp} X P^{\perp} = 0$, and therefore that $P^{\perp} X^{\frac{1}{2}} = 0$.
  Therefore $P^{\perp} \le \chi_{\set{0}}(X)$.

  Therefore we may suppose $s_N(X) > 0$.
  Let $\lambda_1 > \lambda_2 > \cdots > \lambda_m = s_N(X) > 0$ be the distinct eigenvalues of $X$ greater than or equal to $s_N(X)$, and let $Q_j := \chi_{\set{\lambda_j}}(X)$ for $1 \le j \le m$ be the associated spectral projections.
  Then $Q = \sum_{j=1}^m Q_j$, and $X Q_j = \lambda_j Q_j$.
  Let $\lambda'$ be the largest eigenvalue of $X$ less than $\lambda_m$.
  Set $\lambda_{m+1} := \frac{1}{2} ( \lambda_m + \lambda' )$, so that $\lambda_m > \lambda_{m+1} > \lambda' \ge 0$.
  For convenience of notation we set $Q_{m+1} := Q^{\perp}$.
  We remark that $X Q_{m+1} \le \lambda_{m+1} Q_{m+1}$.

  Notice that for $1 \le j \le m+1$, $\trace (P Q_j) = \trace (Q_j P Q_j) \le \trace Q_j$ and that
  \begin{equation*}
    \sum_{j=1}^{m+1} \trace (P Q_j) = \trace \big( P (Q_1 + \cdots + Q_{m+1}) \big) = \trace \big( P (Q+Q^{\perp}) \big) = \trace P = N.
  \end{equation*}
  Therefore, we have majorization of the finite sequences
  \begin{equation}
    \label{eq:finite-majorization-pq}
    \big( \trace(P Q_1), \ldots, \trace(P Q_{m+1}) \big) \maj \bigg( \trace Q_1, \ldots, \trace Q_{m-1}, \trace P - \sum_{j=1}^{m-1} \trace Q_j, 0 \bigg).
  \end{equation}
  Consider the difference of these sequences which, since $\trace Q_j - \trace (PQ_j) = \trace((I-P)Q_j) = \trace(P^{\perp}Q_j)$, has the form
  \begin{equation*}
    (\delta_j)_{j=1}^{m+1} := \bigg( \trace (P^{\perp}Q_1), \ldots, \trace (P^{\perp}Q_{m-1}), \trace P - \sum_{j=1}^m \trace( P Q_j ), - \trace (PQ^{\perp})  \bigg).
  \end{equation*}

  Then because $\sum_{j=1}^{m+1} Q_j = I$ and $XQ_j \le \lambda_j Q_j$ for $1 \le j \le m+1$, and since $(\delta_j)_{j=1}^{m+1}$ has nonnegative partial sums,
  \begin{align*}
    \trace (PX) = \sum_{j=1}^{m+1} \trace (PXQ_j) &\le \sum_{j=1}^{m+1} \lambda_j \trace (PQ_j) \\
                                                  &\le \sum_{j=1}^{m-1} \lambda_j \trace Q_j + \lambda_m \bigg( \trace P - \sum_{j=1}^{m-1} \trace Q_j \bigg)  \tag*{by \Cref{lem:majorization-product-decreasing-sequence} with \eqref{eq:finite-majorization-pq},}\\
                                                  &= \sum_{j=1}^M s_j(X) + \lambda_m(X) (N - M) \\
                                                  &= \sum_{j=1}^N s_j(X),
  \end{align*}
  where $M := \sum_{j=1}^{m-1} \trace Q_j$.
  By hypothesis the first and last expressions in the above chain are equal, and therefore we must have equality throughout.

  Since from the previous display $\sum_{j=1}^{m+1} \delta_j \lambda_j = 0$, and because the $\lambda_j$ are distinct and positive, \Cref{lem:majorization-product-decreasing-sequence} guarantees $\delta_j = 0$ for all $1 \le j \le m+1$.
  Therefore, $\trace (PQ^{\perp}) = \trace (P Q_{m+1}) = 0$ and hence $P \le Q$.
  Similarly, for $1 \le j \le m-1$, $\trace (P^{\perp} Q_j) = 0$, and thus $Q_j \le P$,
  so we may write $P = Q_1 + \cdots Q_{m-1} + P'$ for some projection $P'$.
  Finally, the projection $Q-P = Q_m - P' \le Q_m = \chi_{\set{s_N(X)}}(X)$.

  Notice that $X$ commutes with any subprojection of $\chi_{\set{s_N(X)}}(X)$ (because $X$ is scalar relative to this subspace), hence $X$ commutes with $Q-P$.
  Since $X$ also commutes with $Q$ (because it is a spectral projection), it must commute with $P$ as well.
\end{proof}

The next proposition guarantees a kind of block diagonal decomposition for those $X \in \orbit(C)$ which maximize $\ocnr(A)$ for selfadjoint $A \in B(\Hil)$.
This proposition is essential in proving: \Cref{thm:ocnr-closed-rank-condition}, which establishes a sufficient condition for $\ocnr(A)$ to be closed for some $A \in B(\Hil)$; \Cref{thm:direct-sum-characterization}, which characterizes the behavior of the orbit-closed $C$-numerical range under direct sums; and \Cref{thm:normal-convex-c-spectrum}, which establishes an analogue for the orbit-closed $C$-numerical range of $\nr(A) = \conv \ptspec(A)$ when $A \in \K$ is normal.

\begin{proposition}
  \label{prop:block-diagonal-decomposition}
  Suppose that $C \in \traceclass^+$ is a positive trace-class operator and $A \in B(\Hil)$ is selfadjoint with $m := \max \essspec(A)$.
  Let $\set{\lambda_l}_{l=1}^N$ denote the distinct elements of $\spec(A)$ greater than $m$ listed in decreasing order, and including $\lambda_N = m$ when this list is finite.
  
  If $X \in \orbit(C)$ is a maximizer of $\ocnr(A)$, that is, if $\trace(XA) = \sup \ocnr(A)$, then $X$ commutes with each of the projections $P_l = \chi_{\set{\lambda_l}}(A)$ for $1 \le l \le N$.
  Moreover, for $1 \le l < N$, the compression of $X$ to $P_l$ is unitarily equivalent to $\diag(s_{n_{l-1}+1}(C),\ldots,s_{n_l}(C))$ where $n_0 := 0$ and $n_l := \sum_{j=1}^l \trace P_j$.
  Furthermore, if $N < \infty$, then the compression of $X$ to $\chi_{\set{0}}(A)$ lies in $\orbit\big( \diag(s_{n_{N-1}+1}(C),s_{n_{N-1}+2}(C),\ldots,s_{n_N}(C)) \big)$, where this sequence is infinite if $n_N = \infty$.
\end{proposition}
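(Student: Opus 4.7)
I begin by translating: by \Cref{prop:c-numerical-range-basics}\ref{item:similarity-preserving-ocnr}, I may assume $m = 0$. Then \Cref{thm:c-numerical-range-selfadjoint-formula} gives $XP = PX = X$ for $P := \chi_{[0,\infty)}(A)$, so $X$ is supported on $P\Hil$ and $\trace(XA) = \trace(XA_+) = \sum_k s_k(C) s_k(A_+)$ by \Cref{prop:c-numerical-range-maximum}, viewing $X$ and $A_+$ as positive operators on $P\Hil$.

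The heart of the argument is a Schur--Horn/majorization calculation. Choose an orthonormal basis $\{f_k\}$ of $P\Hil$ diagonalizing $A_+$ with $s_k(A_+)$ nonincreasing on the positive eigenvectors, extended arbitrarily on $\ker A \cap P\Hil$. Setting $d_k := \angles{Xf_k, f_k}$ on the positive-eigenvector indices, the Schur--Horn theorem gives that the full diagonal of $X$ on $P\Hil$ is submajorized by $s(C)$, hence so is the subsequence $(d_k)$. Combining the rearrangement inequality with \Cref{lem:majorization-product-decreasing-sequence}\ref{item:weak-majorization-implies-c-weak-majorization} applied to $\delta_k := s_k(C) - (d_k)^{*}$ yields
\begin{equation*}
\sum_k s_k(C)\,s_k(A_+) = \trace(XA_+) = \sum_k d_k\,s_k(A_+) \le \sum_k (d_k)^{*}\,s_k(A_+) \le \sum_k s_k(C)\,s_k(A_+),
\end{equation*}
forcing equalities throughout. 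The rearrangement equality, together with the strict decrease of $s_k(A_+)$ across blocks, forces $(d_k)$ to be nonincreasing across blocks, so $\sum_{k=1}^{n_l} d_k = \sum_{k=1}^{n_l} (d_k)^{*}$. The second equality, via \Cref{lem:majorization-product-decreasing-sequence}\ref{item:liminf-c-majorization-implies-block-majorization} applied at each jump of $s_k(A_+)$ (i.e., each $n_l$ with $l \ge 1$ when $N = \infty$; and with $1 \le l \le N-1$ when $N < \infty$), gives $\sum_{k=1}^{n_l}(d_k)^{*} = \sum_{k=1}^{n_l} s_k(C)$. Hence $\trace(Q_l X) = \sum_{k=1}^{n_l} s_k(X)$, where $Q_l := P_1 + \cdots + P_l$.

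\Cref{lem:extreme-points-k-numerical-range} now applies to each such $Q_l$, giving that $X$ commutes with $Q_l$ and that $Q_l \le \chi_{[s_{n_l}(X),\infty)}(X)$. Differencing yields commutation of $X$ with each $P_l = Q_l - Q_{l-1}$ for $l < N$; when $N < \infty$, commutation with $P_N = P - Q_{N-1}$ follows as well since $X$ commutes with $P$. For the eigenvalue claim, the $n_l$ eigenvalues of $X|_{Q_l \Hil}$ (counted with multiplicity) are drawn from $s(X) = s(C)$, all lie in $[s_{n_l}(C),\infty)$, and sum to $\sum_{k=1}^{n_l} s_k(C)$; a short counting argument comparing against the maximum sum subject to these constraints forces the multiset to be exactly $\{s_1(C),\ldots,s_{n_l}(C)\}$. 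Taking differences $P_l = Q_l - Q_{l-1}$ for $l < N$ yields the multiset $\{s_{n_{l-1}+1}(C),\ldots,s_{n_l}(C)\}$ on $P_l \Hil$, which is the unitary equivalence. When $N < \infty$, the compression $X|_{P_N \Hil}$ is a positive trace-class operator whose eigenvalue sequence is the complementary tail $s_{n_{N-1}+1}(C), s_{n_{N-1}+2}(C),\ldots$, of the appropriate length since $\rank C \le \trace P = n_N$ by \Cref{thm:c-numerical-range-selfadjoint-formula}; it therefore lies in the claimed orbit by \Cref{prop:orbit-closure-equivalences}.

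The chief obstacle is the second paragraph: one must simultaneously exploit the rearrangement equality and the Lemma~\ref{lem:majorization-product-decreasing-sequence}\ref{item:liminf-c-majorization-implies-block-majorization} equality to pull the partial-sum identity back to the original indexing of $(d_k)$, not merely to its monotone rearrangement. Care is also needed to manage several edge cases (finite vs.\ infinite rank of $A_+$ and of $C$, nontrivial $\ker A \cap P\Hil$, and possible coincidences within $s(C)$) without breaking the submajorization bookkeeping or the counting argument at the block boundaries.
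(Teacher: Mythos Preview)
Your proof is correct and follows essentially the same path as the paper's: both translate to $m=0$, take the diagonal of $X$ in a basis adapted to the eigenspaces of $A_+$, use submajorization together with \Cref{lem:majorization-product-decreasing-sequence} to obtain the block partial-sum identities $\sum_{k=1}^{n_l} d_k = \sum_{k=1}^{n_l} s_k(C)$, and then invoke \Cref{lem:extreme-points-k-numerical-range} for commutation and the eigenvalue identification. The paper avoids your rearrangement detour by applying \Cref{lem:majorization-product-decreasing-sequence} directly to $\delta_k = s_k(C) - d_k$ (whose partial sums are already nonnegative since $\sum_{k\le n} d_k \le \sum_{k\le n}(d_k)^* \le \sum_{k\le n} s_k(C)$), and it applies \Cref{lem:extreme-points-k-numerical-range} inductively to each $P_l$ rather than to the cumulative $Q_l$; these are cosmetic differences.
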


\begin{proof}
  By translating and applying \Cref{prop:c-numerical-range-basics} and \Cref{thm:c-numerical-range-selfadjoint-formula} we may assume without loss of generality that $m = 0$.
  Then $\set{\lambda_l}_{l=1}^N$ are the distinct terms in the sequence $s(A_+)$ listed in decreasing order, and for $1 \le l < N$, the multiplicity of $\lambda_l$ in this sequence is exactly $\trace P_l$.
  Set $P_0 := I - \sum_{l=1}^N P_l$.

  Let $\set{e_j}_{j=-M}^{n_N}$ be an orthonormal basis where $\set{e_j}_{j=-M}^0$ is a basis for $P_0 \Hil$ and for each $1 \le l \le N$, the collection $\set{e_j}_{j=n_{l-1} + 1}^{n_l}$ is a basis for $P_l \Hil$.
  Then let $(d_n)_{n=-M}^{n_N}$ be the diagonal of $X$ relative to this basis.
  We have $(d_n)_{n=1}^{\rank A_+} \submaj s(X) = s(C)$.
  Therefore
  \begin{align*}
    \sum_{n=1}^{\rank A_+} s_n(C) s_n(A_+) &= \sum_{n=1}^{\infty} s_n(C) s_n(A_+) \\
                                        &= \sup \ocnr(A) \\
                                        &= \trace(XA) \\
                                        &= \trace(XA_-) + \sum_{l=1}^{N-1} \trace(XAP_l) \\
                                        &= \sum_{l=1}^{N-1} \trace(XP_l \lambda_l) \\
                                        &= \sum_{l=1}^{N-1} \sum_{n=n_{l-1}+1}^{n_l} d_n \lambda_l \\
                                        &= \sum_{n=1}^{\rank A_+} d_n s_n(A_+).
  \end{align*}
  Then by \Cref{lem:majorization-product-decreasing-sequence} we obtain for each $1 \le l < N$, $\sum_{n=1}^{n_l} d_n = \sum_{n=1}^{n_l} s_n(C)$.
  Therefore, for each $1 \le l < N$, we find
  \begin{equation*}
    \trace (P_l X) = \sum_{n=n_{l-1} + 1}^{n_l} d_n = \sum_{n=n_{l-1} + 1}^{n_l} s_n(C).
  \end{equation*}
  Then by \Cref{lem:extreme-points-k-numerical-range}, $P_1$ commutes with $X$;
  moreover, if $X_1$ denote the compression of $X$ to $P_1$, then $X_1 \in \ugroup\big(\diag(s_1(C),\ldots,s_{n_1}(C))\big)$.

  Consequently, if we consider $X'_1$ to be the compression of $X$ to $P_1^{\perp}$, then $X'_1$ lies in $\orbit\big(\diag(s_{n_1 + 1}(C),s_{n_1 + 2}(C),\ldots)\big)$.
  Therefore, we may again apply \Cref{lem:extreme-points-k-numerical-range} to conclude that $X'_1$ (and hence also $X$) commutes with $P_2$.
  Moreover, if $X_2$ denotes the compression of $X$ to $P_2$, then $X_2 \in \ugroup\big(\diag(s_{n_1 + 1}(C),\ldots,s_{n_2}(C))\big)$.

  Continuing this procedure, by induction on $l$ we obtain for each $1 \le l < N$ that $P_l$ commutes with $X$, and that the compression $X_l$ of $X$ to $P_l$ is a matrix of size $n_l - n_{l-1} = \trace P_l$ with $X_l \in \ugroup \big(\diag(s_{n_{l-1} + 1}(C),\ldots,s_{n_l}(C))\big)$.

  If $\rank C \le \rank A_+ = n_{N-1}$, then the proof is already complete.
  If, on the other hand, $\rank C > \rank A_+$, then $N < \infty$ and we must consider $X'_{N-1}$, which is the compression of $X$ to $P_0 + P_N$.
  From the above, we know that $X'_N \in \orbit\big(\diag(s_{n_{N-1} + 1}(C),s_{n_{N-1} + 2}(C),\ldots)\big)$.
  However, $P_0 = \chi_{(-\infty,0)}(A)$ and $P_N := \chi_{\set{0}}(A)$.
  By \Cref{thm:c-numerical-range-selfadjoint-formula}, since $\trace(XA)$ is a maximizer of $\ocnr(A)$, we must have that $P_0 X = X P_0 = 0$, and therefore the compression of $X$ to $P_N$ lies in $\orbit\big(\diag(s_{n_N + 1}(C),s_{n_N + 2}(C),\ldots,s_{n_N}(C))\big)$.
\end{proof}

Using \Cref{prop:block-diagonal-decomposition}, it is possible to give a condition under which $\sup \ocnr(A)$, or even $\sup \cnr(A)$, is attained when $A \in \K^+$ and $C \in \traceclass^+$.

\begin{remark}
  \label{rem:equal-kernel-positive-compact}
  Suppose that $A,C$ are positive compact operators with infinite rank and that $C$ is trace-class.
  Every $X \in \orbit(C)$ for which $\trace(XA) = \sup \ocnr(A)$ satisfies $\ker X = \ker A$.
  Indeed, by \Cref{prop:block-diagonal-decomposition}, for the projections $P_l$ for $l \in \nats$, the operator $X$ commutes with each $P_l$ and the compression $X_l$ of $X$ to $P_l$ lies in $\ugroup( \diag(s_{n_{l-1}+1}(C),\ldots,s_{n_l}(C)) )$.
  Consequently, $X$ is strictly positive on $(\sum_{l=1}^{\infty} P_l) \Hil$ and must be zero on the complement.
  Notice that $P_0 := I - \sum_{l=1}^{\infty} P_l$ is the projection onto $\ker A$.
  Thus $\ker A = \ker X$.

  Therefore, there is an $X \in \ugroup(C)$ for which $\trace(XA) = \sup \cnr(A) = \sup \ocnr(A)$ if and only if\footnote{if $\dim \ker C = \dim \ker A$, then $X := \zop_{\ker_A} \oplus \diag(s(C)) \in \ugroup(C)$, and $A = \zop_{\ker A} \oplus \diag(s(A))$ relative to the proper basis so $\trace(XA) = \sup \ocnr(A) = \sup \cnr(A)$ by \Cref{prop:c-numerical-range-maximum}.} $\dim \ker C = \dim \ker A$.
\end{remark}

We conclude this section by using \Cref{thm:c-numerical-range-selfadjoint-formula} and \Cref{prop:boundary-approximation,prop:block-diagonal-decomposition} to establish in \Cref{thm:ocnr-closed-rank-condition} a sufficient condition for $\ocnr(A)$ to be closed.

\begin{theorem}
  \label{thm:ocnr-closed-rank-condition}
  Let $C$ be a positive trace-class operator and let $A \in B(\Hil)$.
  Then $\ocnr(A)$ is closed if for every $\theta$, $\rank (\Re(e^{i\theta}A)-m_{\theta}I)_+ \ge \rank C$, where $m_{\theta} := \max \essspec(\Re(e^{i\theta} A))$.
\end{theorem}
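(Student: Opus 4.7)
The plan is to exploit the convexity of $\ocnr(A)$ from \Cref{cor:c-numerical-range-convex} together with the tools developed in the previous subsection. A convex subset of $\complex$ is closed iff, for every supporting line of its closure, the intersection with the closure is contained in the set. Since the rank hypothesis is invariant under multiplication of $A$ by any $e^{i\theta}$, via the standard rotation trick (using \Cref{prop:c-numerical-range-basics}\ref{item:similarity-preserving-ocnr}) I may reduce to showing $[x_-, x_+] := L \cap \closure{\ocnr(A)} \subseteq \ocnr(A)$, where $L = \{w : \Re w = M\}$ is the rightmost supporting line and $M = \sup \Re \ocnr(A) = \sup \ocnr(\Re A)$.

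First I would establish that $\ocnr(A) \cap L$ is closed via a block decomposition. By \Cref{thm:c-numerical-range-selfadjoint-formula} applied to $\Re A$ with $m = \max \essspec(\Re A)$, the rank hypothesis guarantees $M$ is attained, so $\ocnr(A) \cap L \neq \emptyset$. By \Cref{prop:block-diagonal-decomposition}, every maximizer $X \in \orbit(C)$ decomposes as $X = \bigoplus_l X_l$ across the spectral projections $P_l = \chi_{\{\lambda_l\}}(\Re A)$ of the distinct eigenvalues $\lambda_l > m$, with each $X_l \in \ugroup(C_l)$ inside the finite-dimensional space $P_l\Hil$, where $C_l = \diag(s_{n_{l-1}+1}(C),\ldots,s_{n_l}(C))$. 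The rank hypothesis $\rank C \le n_{N-1}$ forces the tail $s_{n_{N-1}+1}(C), s_{n_{N-1}+2}(C), \ldots$ to vanish, so any compression to $\chi_{\{m\}}(\Re A)\Hil$ is zero. Consequently $\trace(XA) = M + i \sum_l \trace(X_l B_l)$ with $B_l := P_l (\Im A) P_l$, and the summability $\sum_l \norm{C_l}_1 = \trace C < \infty$ makes this a continuous function of $X$ on the product $\prod_l \ugroup(C_l)$, which is compact in the product topology. Hence $\ocnr(A) \cap L$ is compact; in fact it coincides with the Minkowski sum $M + i \sum_l \cnr[C_l](B_l)$, a closed interval of the form $[M + i\sum_l a_l,\ M + i \sum_l b_l]$ where $[a_l,b_l] = \cnr[C_l](B_l)$.

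To conclude that this closed interval is in fact the full segment $[x_-, x_+]$, I would invoke \Cref{prop:boundary-approximation}. Since a planar convex body admits at most countably many line-segment faces on its boundary, I can select arbitrarily small $\theta$ of either sign for which the rightmost supporting line of $\closure{\ocnr(e^{i\theta}A)}$ meets it at a single point, and by \Cref{thm:c-numerical-range-selfadjoint-formula} applied to $e^{i\theta}A$ (where the rank hypothesis again holds), that single point is attained in $\ocnr(e^{i\theta}A)$. Pulling back through $e^{-i\theta}$ produces precisely the boundary points $x_\theta \in \ocnr(A)$ required by \Cref{prop:boundary-approximation}, converging to $x_-$ as $\theta \to 0^+$ and to $x_+$ as $\theta \to 0^-$. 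Applying \Cref{prop:boundary-approximation} and its mirror then yields points of $\ocnr(A) \cap L$ arbitrarily close to each endpoint; by the closedness established in the previous paragraph, both $x_-$ and $x_+$ lie in $\ocnr(A) \cap L$, and convexity supplies $[x_-, x_+] \subseteq \ocnr(A)$, completing the proof.

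The main obstacle I anticipate is the bookkeeping in the infinite-block case: verifying that the countable Minkowski sum $\sum_l \cnr[C_l](B_l)$ is indeed a single closed interval and that the parametrization $\prod_l \ugroup(C_l) \to \ocnr(A) \cap L$ is continuous enough to transport compactness. The rank hypothesis is what makes this tractable, as it truncates the tail of $s(C)$ and keeps every nontrivial block $P_l\Hil$ finite-dimensional, so each $\cnr[C_l](B_l)$ is a \emph{finite-dimensional} $C_l$-numerical range of a selfadjoint operator and is therefore a compact interval of the real line.
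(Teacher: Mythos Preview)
Your proof is correct and follows essentially the same route as the paper: reduce via rotation to the rightmost supporting line, use \Cref{thm:c-numerical-range-selfadjoint-formula} to see the maximum is attained, invoke \Cref{prop:block-diagonal-decomposition} for the block structure of maximizers, and use \Cref{prop:boundary-approximation} together with compactness of the finite-dimensional blocks to capture the endpoints $x_{\pm}$. The only difference is packaging: the paper runs an explicit diagonal-subsequence extraction on the block unitaries $U^{(j)}_l$ to produce a trace-norm limit $X \in \orbit(C)$ with $\trace(XA) = x_-$, whereas you first observe abstractly that $\ocnr(A)\cap L$ is the continuous image of the compact product $\prod_l \ugroup(C_l)$ (Tychonoff plus the summability $\sum_l \norm{C_l}_1 = \trace C$), and then let closedness do the work. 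Your formulation is a bit cleaner, and your identification of $\ocnr(A)\cap L$ with the Minkowski sum $M + i\sum_l \cnr[C_l](B_l)$ is a nice byproduct the paper does not state explicitly; conversely, the paper's hands-on argument makes the trace-norm convergence $X_{j_{n,n}} \to X$ completely explicit.
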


\begin{proof}
  Suppose that the rank condition holds for every angle $\theta$.
  Let $x \in \partial\ocnr(A)$.
  There are two possibilities.

  \begin{case}{There is a supporting line for $\closure{\ocnr(A)}$ which intersects $\closure{\ocnr(A)}$ only at $x$.}
    After applying a suitable rotation, we may assume that $\Re x = \sup \ocnr(\Re(A))$ and that the supporting line is vertical, so that $x$ is the unique point of $\closure{\ocnr(A)}$ with maximal real part.
    Since $\rank (\Re A - m_0 I)_+ \ge \rank C$, \Cref{thm:c-numerical-range-selfadjoint-formula} guarantees that $\sup \ocnr(\Re(A)) = \sup \Re(\ocnr(A))$ is attained, and by uniqueness this must be achieved by $x \in \ocnr(A)$.
  \end{case}

  \begin{case}{The only supporting line for $\closure{\ocnr(A)}$ containing $x$ intersects $\closure{\ocnr(A)}$ in a line segment $[x_-,x_+]$.}
    After applying a suitable rotation, we may assume that $\Re x = \sup \ocnr(\Re(A))$, so the line segment $[x_-,x_+]$ is vertical and has maximal real part.
    Moreover, by translating we may further assume $m_0 = 0$.
    In order to prove that $x \in \ocnr(A)$, it suffices to show that $x_{\pm} \in \ocnr(A)$ since this set is convex by \Cref{cor:c-numerical-range-convex}.

    We consider only $x_-$, as the analysis for $x_+$ is identical.
    Then there are two possibilities.
    The first is that $x_-$ itself has a (different) supporting line for $\closure{\ocnr(A)}$ which intersects $\closure{\ocnr(A)}$ only at $x_-$, in which case $x_- \in \ocnr(A)$ by Case 1;
    this happens precisely when $x_-$ is a corner of $\ocnr(A)$.

    The alternative is that there are no other supporting lines passing through $x_-$.
    This implies that for any $\theta > 0$, the supporting line of $\closure{\ocnr(A)}$ with slope $\cot \theta$ intersects the boundary at a point distinct from $x_-$.
    Now, we claim that there are arbitrarily small $\theta > 0$ such that this line intersects $\closure{\ocnr(A)}$ at a \emph{unique} point $x_{\theta}$, which must be in $\ocnr(A)$ by Case 1.
    Indeed, if not, for each sufficiently small angle $\theta > 0$, $\closure{\ocnr(A)}$ would contain a nondegenerate line segment with slope $\cot \theta$, but this would imply that $\partial\ocnr(A)$ has infinite length (since the sum of uncountably many positive numbers is necessarily infinite), which would violate the fact that $\partial\ocnr(A)$ is rectifiable --- a well-known consequence of being a bounded convex curve.
    Moreover, it is clear that $x_{\theta} \to x_-$ as $\theta \to 0^+$ for whichever positive $\theta$ the point $x_{\theta}$ is defined.

    Thus the situation satisfies the hypotheses of \Cref{prop:boundary-approximation},  and so we are guaranteed that $\ocnr(A)$ contains points on the line segment $[x_-,x_+]$ arbitrarily close to $x_-$.
    So consider a sequence of points $(x_j)$ in  $\ocnr(A) \cap [x_-,x_+]$ converging to $x_-$.
    Then there are $X_j \in \orbit(C)$ with $\trace(X_j A) = x_j$, and hence $\trace(X_j \Re A) = \Re x_j = \sup \ocnr(\Re A)$.
    We may therefore apply \Cref{prop:block-diagonal-decomposition} to obtain finite projections $\set{P_l}_{l=1}^N$ such that the compression of $X_j$ to $P_l$ lies in $\ugroup\big(\diag(s_{n_{l-1}+1}(C),\ldots,s_{n_l}(C))\big)$ and moreover $X_j$ commutes with each $P_l$.
    If we set $P_0 := I - \sum_{l=1}^N P_l$, then since $\rank C \le \rank A_+$, we see that $X_j P_0 = 0$.
    So $X_j$ is block diagonal with respect to the blocks $P_l$, and $P_0 X_j = 0$.
    Note, the projections for these blocks are \emph{independent} of $j$.

    Now, by the Schur--Horn theorem (\cite{Sch-1923-SBMG,Hor-1954-AJM}, but see \cite[Theorem~1.1]{KW-2010-JFA} for a concise, self-contained statement), there are block unitaries $U^{(j)} = \bigoplus_{l \ge 0} U^{(j)}_l$ for which $X_j = U^{(j)}(\zop_{P_0 \Hil} \oplus \diag(s(C)) U^{(j)*}$.
    Moreover, we can select $U^{(j)}_0 = I_{P_0 \Hil}$.
    It is important to note that for each $l \ge 1$,  $U^{(j)}_l$ is a finite matrix of size $n_l - n_{l-1}$.

    We now apply the standard recursive subsequence technique to obtain a subsequence of the unitaries $U^{(j)}$ with desirable properties.
    More specifically, by compactness of the unitary group in finite dimensions, there is a subsequence $U^{(j_{1,n})}$ such that $U^{(j_{1,n})}_1$ converges to some unitary matrix $U_1$ (of size $n_1 - n_0$).
    Then for $l \ge 1$ we inductively construct a subsequence $U^{(j_{l+1,n})}$ of $U^{(j_{l,n})}$ for which $U^{(j_{l+1,n})}_{l+1}$ converges to some unitary matrix $U_{l+1}$.
    Then consider the subsequence of the original sequence $U^{(j)}$ given by $V_n := U^{(j_{n,n})}$.
    Define $U := \bigoplus_{l \ge 0} U_l$ and $X := U(\zop_{P_0 \Hil} \oplus \diag(s(C))) U^{*} \in \orbit(C)$.
    Note that $V_n$ converges entrywise to $U$, but not necessarily in any operator topology.
    
    We claim that $X_{j_{n,n}} = V_n (\zop_{P_0 \Hil} \oplus \diag(s(C)) V_n^{*}$ converges in trace-norm to $X$.
    Indeed, let $\epsilon > 0$ and since $C$ is trace-class, there is some $M$ such that $\sum_{n=n_M+1}^{\infty} s_n(C) < \frac{\epsilon}{4}$.
    Set $P := \bigoplus_{l=1}^M P_l$, which is a finite projection that commutes with $U, V_n, C' := \zop_{P_0 \Hil} \oplus \diag(s(C))$ because each $P_l$ does.
    Moreover, because $V_n$ converges entrywise to $U$ and $P$ is a finite projection, $V_n P$ converges to $UP$ in trace norm (or any other norm topology since all norms on a finite dimensional space are equivalent).
    Therefore there is some $K$ such that for all $k \ge K$, $\norm{V_k P - U P}_1 < \frac{\epsilon}{4 \norm{C}}$.
    Thus we obtain
    \begin{align*}
      \norm{(V_k C' V_k^{*} - U C' U^{*})P}_1 &\le \norm{V_k PC' V_k^{*} - U PC' V_k^{*}}_1 + \norm{U C'P V_k^{*} - U C'P U^{*}}_1 \\
                                              &\le \norm{V_k P - U P}_1 \norm{C'V_k^{*}} + \norm{UC'} \norm{PV_k^{*} - PU^{*}}_1 \\
      &< \frac{\epsilon}{4\norm{C}} \norm{C} + \norm{C} \frac{\epsilon}{4\norm{C}} = \frac{\epsilon}{2}.
    \end{align*}
    In addition,
    \begin{align*}
      \norm{(V_k C' V_k^{*} - U C' U^{*})P^{\perp}}_1 &\le \norm{V_k(C' P^{\perp})V_k^{*}}_1 + \norm{U(C' P^{\perp})U^{*}}_1 \\
                                                      &\le\norm{V_k}\norm{C' P^{\perp}}_1 \norm{V_k^{*}} + \norm{U}\norm{C' P^{\perp}}_1 \norm{U^{*}} \\
                                                      &< \frac{\epsilon}{4} + \frac{\epsilon}{4} = \frac{\epsilon}{2}.
    \end{align*}
    Therefore, combining the above displays yields
    \begin{align*}
      \norm{X_{j_{k,k}} - X}_1 &= \norm{V_k C' V_k^{*} - U C' U^{*}}_1 \\
                             &= \norm{(V_k C' V_k^{*} - U C' U^{*})P}_1 + \norm{(V_k C' V_k^{*} - U C' U^{*})P^{\perp}}_1 \\
                             &< \frac{\epsilon}{2} + \frac{\epsilon}{2} = \epsilon.
    \end{align*}
    Thus $x_{j_{n,n}} = \trace(X_{j_{n,n}} A) \to \trace(XA)$.
    Since $x_{j_{n,n}} \to x_-$, we find $x_- = \trace(XA) \in \ocnr(A)$.

    Finally, a symmetric argument applies to $x_+$, and hence $x_{\pm} \in \ocnr(A)$.
    Because $\ocnr(A)$ is convex by \Cref{cor:c-numerical-range-convex}, $x \in [x_-,x_+] \subseteq \ocnr(A)$, thereby completing the proof. \qedhere
  \end{case}
\end{proof}

The following example shows that although the hypothesis of \Cref{thm:ocnr-closed-rank-condition} is not a necessary condition for $\ocnr(A)$ to be closed, it is in some sense sharp.

\begin{example}
  \label{ex:single-angle-nonclosed}
  This example shows that if the rank condition in \Cref{thm:ocnr-closed-rank-condition} fails for even a single angle $\theta$, it is possible for $\ocnr(A)$ not to be closed, even if $\ocnr(A)$ contains at least one boundary point for every angle $\theta$.
  In fact, this example even uses the usual numerical range $\nr(A)$ and a diagonalizable operator $A$.

  Consider the diagonalizable operator $A$ whose eigenvalues are $1$ and $\pm i + e^{\pm \frac{i\pi}{n}}$ for all $n \in \nats$, each with multiplicity one.
  Then the line segment $[1-i,1+i]$ lies on the boundary $\partial \nr(A)$, but $\nr(A) \cap [1-i,1+i] = \set{1}$, although nonempty, contains only a single point.
  Moreover, $\rank \Re(A - m_0 I)_+ = 0$, but $\rank \Re(A - m_{\theta} I)_+ = \infty$ for any $0 < \theta < 2\pi$.
  See \Cref{fig:example-rank} for a diagram of this situation.

  The part of the proof which breaks down because $\rank \Re(A - m_0 I)_+ = 0$ is in the approximation result \Cref{prop:boundary-approximation}.
  In particular, there aren't enough (any) spectral projections $P_j$ corresponding to nonzero eigenvalues of $\Re(A - m_0 I)_+$.

  Of course, if we modify $A$ to have the eigenvalues $1 \pm i$ as well, then $\nr(A)$ becomes closed even though we still have $\rank \Re(A - m_0 I)_+ = 0$.
  Therefore the sufficient condition given in \Cref{thm:ocnr-closed-rank-condition} is not necessary.
  There are even simpler examples: the orbit-closed $C$-numerical range of a scalar is closed (a singleton), but $\rank \Re(A - m_\theta I)_+ = 0$ for all $\theta$.
\end{example}

\begin{figure}
  \centering
  \includegraphics[width=0.75\textwidth]{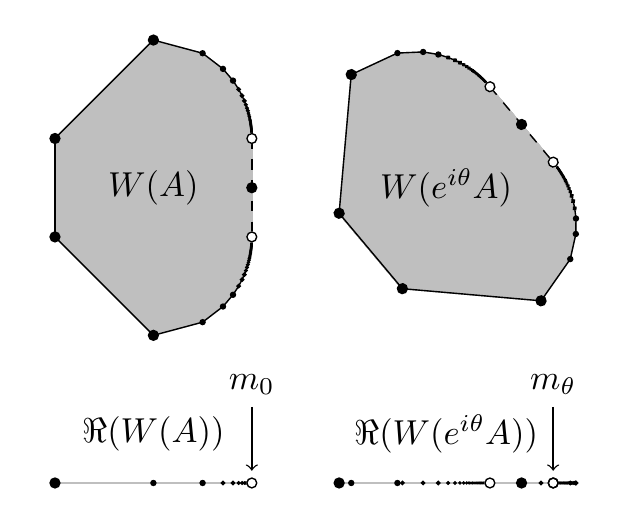}
  \caption{The numerical range and eigenvalues of a diagonalizable operator for which $\rank \Re(A-m_0 I)_+ = 0$ and $\rank \Re(A-m_{\theta} I)_+ = \infty$ for each $0 < \theta < 2\pi$. All eigenvalues have multiplicity 1 and are indicated by filled circles.}
  \label{fig:example-rank}
\end{figure}

\section{Compact normal operators and the $\orbit(C)$-spectrum}
\label{sec:oc-spectrum}

It is a standard result in linear algebra that for a normal matrix $A \in \mat_n(\complex)$ the standard numerical range satisfies $\nr(A) = \conv \spec(A)$, which is an immediate consequence of the elementary (finite or infinite dimensional) fact that $\nr(A_1 \oplus A_2) = \conv \big( \nr(A_1) \cup \nr(A_2) \big)$.
Of course, there are many ways to extend or generalize this result.

One can extend it to the infinite dimensional setting in two ways.
For normal $A \in B(\Hil)$, there is the folklore result $\closure{\nr(A)} = \conv \spec(A)$.
However, restricting to normal $A \in \K$, de Barra, Giles and Sims proved $\nr(A) = \conv \ptspec(A)$ \cite[Theorem~2]{dBGS-1972-JLMSIS}.

The other option is to generalize the matrix result to other numerical ranges, such as the $k$-numerical range or the $C$-numerical range.
In this case, one needs a substitute for the spectrum $\spec(A)$ which is somehow relativized to the matrix $C$.
For $C \in \mat_n(\complex)$ normal, Marcus \cite{Mar-1979-ANYAS} introduced a substitute, now referred to as the \term{$C$-spectrum} and denoted\footnote{
  This notation is common in the later literature, but Marcus actually used the notation $\cspec(A)$ to refer to the \emph{convex hull} of the $C$-spectrum, and he called this the \term{$C$-eigenpolygon}.
}
$\cspec(A)$, consisting of the sums of products of the eigenvalues of $C,A$.
There he proved that if $A \in \mat_n(\complex)$ is also normal, then $\cnr(A) = \conv \cspec(A)$.

Dirr and vom Ende extended the notion of $C$-spectrum to the infinite dimensional setting with $C \in \traceclass$ and $A \in \K$ \cite[Definition~3.2]{DvE-2020-LaMA}, where they also managed to prove \cite[Theorem~3.4, Corollary~3.1]{DvE-2020-LaMA} that if $C,A$ are both normal, then
\begin{equation}
  \label{eq:dirr-vom-ende-c-spectrum-results}
  \cspec(A) \subseteq \cnr(A) \subseteq \conv \closure{\cspec(A)} \quad\text{and thus, if $C=C^{*}$,}\quad \closure{\cnr(A)} = \conv \closure{\cspec(A)}.
\end{equation}
Moreover, if $C$ is normal and $A$ is upper triangular, or vice versa, $\cspec(A) \subseteq \cnr(A)$ \cite[Theorem~3.5]{DvE-2020-LaMA}.

While Dirr and vom Ende's results are impressive, because of the hypothesis $A \in \K$ and de Barra, Giles and Sims result $\nr(A) = \conv \ptspec(A)$, one might hope for the chance to remove the closures from $\closure{\cnr(A)} = \conv \closure{\cspec(A)}$ in \eqref{eq:dirr-vom-ende-c-spectrum-results}.
In this section, for $C \in \traceclass^+$, we do precisely that for the orbit-closed $C$-numerical range and the $\orbit(C)$-spectrum (see \Cref{def:c-spectrum}), which is a (not necessarily closed) slight modification of the $C$-spectrum defined by Dirr and vom Ende (see \Cref{rem:relation-cspec-to-ocspec}).
In particular, \Cref{thm:normal-convex-c-spectrum} says that if $C \in \traceclass^+$ and $A \in \K$ is normal, then $\ocnr(A) = \conv \ocspec(A)$.
Along the way, with \Cref{thm:direct-sum-characterization} we characterize the behavior of the orbit-closed $C$-numerical range under direct sums, thereby generalizing the finite rank result \cite[Result~(4.4)]{LP-1995-FDaaMaE}.

\begin{lemma}
  \label{lem:selfadjoint-direct-sum-characterization}
  Let $C$ be a positive trace-class operator, $P$ an arbitrary projection, and suppose $A = A_{P^{\vphantom{\perp}}} \oplus A_{P^{\perp}} \in B(\Hil)$ is selfadjoint, where $A_{P^{\vphantom{\perp}}},A_{P^{\perp}}$ act on $P\Hil, P^{\perp} \Hil$, respectively.
  Then
  \begin{equation*}
    \sup \ocnr(A) = \sup \set{ \trace(XA) \mid X = X_{P^{\vphantom{\perp}}} \oplus X_{P^{\perp}} \in \orbit(C) },
  \end{equation*}
  and if either supremum is attained, then they both are.
\end{lemma}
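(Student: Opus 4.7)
The inequality $\sup\ocnr(A) \ge \sup\{\trace(XA) \mid X = X_{P^{\vphantom{\perp}}} \oplus X_{P^{\perp}} \in \orbit(C)\}$ is immediate because any such block-diagonal $X$ lies in $\orbit(C)$. For the reverse, by \Cref{prop:c-numerical-range-basics}\ref{item:similarity-preserving-ocnr} we may assume $m := \max\essspec(A) = 0$, so that by \Cref{thm:c-numerical-range-selfadjoint-formula} together with \Cref{prop:c-numerical-range-maximum},
\begin{equation*}
  \sup\ocnr(A) \;=\; \sup\ocnr(A_+) \;=\; \sum_{n=1}^{\infty} s_n(C)\, s_n(A_+),
\end{equation*}
where $A_+ = (A_{P^{\vphantom{\perp}}})_+ \oplus (A_{P^{\perp}})_+$ is positive compact. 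The strategy is to construct a block-diagonal $X \in \orbit(C)$ that realizes (or approaches) this value by pairing the top eigenvalues of $C$ with those of $A_+$ in an eigenbasis compatible with the decomposition $\Hil = P\Hil \oplus P^{\perp}\Hil$.

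First suppose $\rank C \le \trace\chi_{[0,\infty)}(A)$. Pick eigenbases for $(A_{P^{\vphantom{\perp}}})_+$ and $(A_{P^{\perp}})_+$ on their respective supports, and merge them into a single orthonormal sequence $(f_n)$ on $\chi_{(0,\infty)}(A)$ ordered so that $A_+ f_n = s_n(A_+) f_n$ with $s_n(A_+)$ decreasing; each $f_n$ then lies entirely in $P\Hil$ or $P^{\perp}\Hil$. If $\rank C > \rank A_+$, append orthonormal vectors drawn from $\chi_{\{0\}}(A_{P^{\vphantom{\perp}}}) \cup \chi_{\{0\}}(A_{P^{\perp}})$ (which has total rank $\trace\chi_{[0,\infty)}(A) - \rank A_+$) until the sequence has length $\rank C$. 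Define $X$ diagonally by $Xf_n := s_n(C) f_n$ and $X = 0$ on the orthogonal complement. Then $X = X_{P^{\vphantom{\perp}}} \oplus X_{P^{\perp}}$ is block-diagonal, $\lambda(X) = \lambda(C)$ forces $X \in \orbit(C)$ by \Cref{prop:orbit-closure-equivalences}, and a direct calculation gives $\trace(XA) = \sum_n s_n(C)\, s_n(A_+) = \sup\ocnr(A)$; so both suprema coincide and are attained.

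Otherwise $\rank C > \trace\chi_{[0,\infty)}(A)$; in particular $\rank A_+$ is finite and $\sup\ocnr(A)$ is not attained on the left by \Cref{thm:c-numerical-range-selfadjoint-formula}. Since $0 \in \essspec(A_{P^{\vphantom{\perp}}}) \cup \essspec(A_{P^{\perp}})$, a short spectral argument (using $\rank A_+ < \infty$ to exclude positive accumulation) shows that at least one of $\chi_{(-\epsilon,0)}(A_{P^{\vphantom{\perp}}})$ or $\chi_{(-\epsilon,0)}(A_{P^{\perp}})$ has infinite rank for every $\epsilon > 0$. Given $\epsilon > 0$, choose such a side and extend the sequence $(f_n)$ from the previous paragraph to length $\rank C$ by appending orthonormal vectors from this infinite-rank spectral subspace. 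Defining $X$ diagonally as before yields a block-diagonal $X \in \orbit(C)$ with
\begin{equation*}
  \trace(XA) \;\ge\; \sum_{n=1}^{\infty} s_n(C)\, s_n(A_+) - \epsilon\, \trace C \;=\; \sup\ocnr(A) - \epsilon\, \trace C,
\end{equation*}
because $A > -\epsilon I$ on the excess support. Letting $\epsilon \to 0^+$ gives the supremum equality. The attainment equivalence is now immediate: attainment on the left forces $\rank C \le \trace\chi_{[0,\infty)}(A)$ by \Cref{thm:c-numerical-range-selfadjoint-formula}, landing us in the preceding paragraph where the right-hand supremum is also attained, while the converse is trivial.

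The only real obstacle is the bookkeeping: one must arrange $(f_n)$ to respect $\Hil = P\Hil \oplus P^{\perp}\Hil$ at every stage. This is possible because all of the relevant spectral projections of $A$ --- the eigenprojections of $A_+$, $\chi_{\{0\}}(A)$, and $\chi_{(-\epsilon,0)}(A)$ --- commute with $P$, so the requisite spectral data can be harvested independently from the two summands and then merged in the order dictated by the decreasing eigenvalues of $A_+$.
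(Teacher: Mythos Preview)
Your proof is correct and follows essentially the same approach as the paper's: reduce to $m=0$, split into the two cases governed by \Cref{thm:c-numerical-range-selfadjoint-formula} according to whether $\rank C \le \trace\chi_{[0,\infty)}(A)$, and in each case build a block-diagonal $X \in \orbit(C)$ by pairing the top eigenvalues of $C$ with eigenvectors of $A_+$ (padding with vectors from $\chi_{\{0\}}(A)$ or $\chi_{(-\epsilon,0)}(A)$ as needed), exploiting that every spectral projection of $A$ commutes with $P$. The only cosmetic difference is that the paper organizes Case~1 via the distinct eigenvalues $\lambda_l$ and their spectral projections $P_l$, whereas you work directly with a merged eigenbasis $(f_n)$; and in Case~2 the paper argues that $\chi_{(-\epsilon,0)}(A)$ is infinite and commutes with $P$, while you route through $0 \in \essspec(A_{P}) \cup \essspec(A_{P^{\perp}})$ --- both reach the same conclusion.
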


\begin{proof}
  The inequality $\sup \ocnr(A) \ge \sup \set{ \trace(XA) \mid X = X_{P^{\vphantom{\perp}}} \oplus X_{P^{\perp}} \in \orbit(C) }$ is trivial since the latter set is a subset of the former.
  We split the remainder of the proof into cases.
  By translating, we may assume $\max \essspec(A) = 0$.
  
  \begin{case}{The supremum $\sup \ocnr(A)$ is attained.}
    We must produce an $X = X_{P^{\vphantom{\perp}}} \oplus X_{P^{\perp}} \in \orbit(C)$ with $\trace(XA) = \sup \ocnr(A)$.
    By \Cref{thm:c-numerical-range-selfadjoint-formula} we are guaranteed that $\rank C \le \trace \chi_{[0,\infty)}(A)$.
    Let $\set{ \lambda_l }_{l=1}^N$ denote the distinct nonnegative eigenvalues of $A$ listed in decreasing order and including zero if and only if this set is finite.
    Then let $P_l$ be the associated spectral projections.
    Set $P_0 := I - \sum_{l=1}^N P_l$.

    Since $P$ commutes with $A$ it commutes with each $P_l$, so we may write $P_l = P_{l} P \oplus P_l P^{\perp}$ which is a sum of orthogonal projections.
    Let $n_0 := 0$ and for $1 \le l \le N$ set $n_l := \sum_{j=1}^l \trace P_j$.
    Note that if $N < \infty$, then $P_N = \chi_{\set{0}}(A)$ which may be either a finite or infinite projection, so $n_N$ may be either finite or infinite.
    Then for each $1 \le l < N$, we may select the finite matrix $\diag((s_n(C))_{n=n_{l-1}+1}^{n_l})$ acting on $P_l \Hil$ and respecting the decomposition $P_l \Hil = P_l P \Hil \oplus P_l P^{\perp} \Hil$, so that $\diag((s_n(C))_{n=n_{l-1}+1}^{n_l}) = X'_l \oplus X''_l$, where $X'_l$ is a matrix of size $\trace P_l P$ and $X''_l$ is a matrix of size $\trace P_l P^{\perp}$.
    The situation for $P_N$ is similar, except that the matrices involved might be infinite.
    That is, we can consider the operator $\diag((s_n(C))_{n=n_{N-1}+1}^{n_N}) = X'_N \oplus X''_N$ acting on the (possibly infinite dimensional) space $P_N \Hil = P_N P \Hil \oplus P_N P^{\perp} \Hil$.
    Set $X'_0 = 0 = X''_0$ acting on $P_0 P \Hil$ and $P_0 P^{\perp} \Hil$, respectively.


    Setting $X_{P^{\vphantom{\perp}}} := \bigoplus_{l=0}^N X'_l$ and $X_{P^{\perp}} := \bigoplus_{l=0}^N X''_l$, we claim that $X = X_{P^{\vphantom{\perp}}} \oplus X_{P^{\perp}}$ is the desired operator.
    Indeed, notice that $\sum_{l=1}^N \trace P_l$ is either infinite or equal to $\trace \chi_{[0,\infty)}(A)$, which is in either case greater than or equal to $\rank C$.
    Therefore, the operators $X'_l,X''_l$ have exhausted all the nonzero values of the sequence $s(C)$ (i.e., $s_j(C) = 0$ if $j > n_N$) and hence $X \in \orbit(C)$.
    Moreover, since $s_n(A_+) = \lambda_l$ whenever $n_{l-1} < n \le n_l$, we find
    \begin{align*}
      \trace (XA) = \sum_{l=0}^N \trace (X P_l A) &= \sum_{l=0}^N \trace ((X'_l \oplus X''_l) \lambda_l) \\
                                                  &= \sum_{l=1}^N \sum_{n=n_{l-1}+1}^{n_l} s_n(C) \lambda_l \\
                                                  &= \sum_{l=1}^N \sum_{n=n_{l-1}+1}^{n_l} s_n(C) s_n(A_+) \\
                                                  &= \sum_{n=1}^{n_N} s_n(C) s_n(A_+).
    \end{align*}
    Now $n_N \ge \rank A_+$, and so we have
    \begin{equation*}
      \trace (XA) = \sum_{n=1}^{n_N} s_n(C) s_n(A_+) = \sum_{n=1}^{\infty} s_n(C) s_n(A_+) = \sup \ocnr(A),
    \end{equation*}
    where the last equality is due to \Cref{thm:c-numerical-range-selfadjoint-formula}.
  \end{case}

  \begin{case}{The supremum $\sup \ocnr(A)$ is not attained.}
    In this case, by \Cref{thm:c-numerical-range-selfadjoint-formula} $M := \trace \chi_{[0,\infty)}(A) < \rank C$, and so this projection is finite.
    Since $0 \in \essspec(A)$, the projection $P_{\epsilon} := \chi_{(-\epsilon,0)}(A)$ must be infinite for any $\epsilon > 0$, and since $P_{\epsilon}$ is a spectral projection for $A$, it commutes with $P$ because $A$ does.
    Therefore $P_{\epsilon} = P_{\epsilon} P + P_{\epsilon} P^{\perp}$ is a sum of projections and at least one of these projections must be infinite.

    Let $C' := \diag(s_1(C),\ldots,s_M(C),0,0,\ldots)$.
    Since $\rank C' = M$, by Case 1 we know that there is some $X = X_{P^{\vphantom{\perp}}} \oplus X_{P^{\perp}} \in \orbit(C')$ for which
    \begin{equation*}
      \trace (X A) = \sup \ocnr[C'](A) = \sum_{n=1}^M s_n(C) s_n(A_+) = \sum_{n=1}^{\infty} s_n(C) s_n(A_+) = \sup \ocnr(A),
    \end{equation*}
    where the third equality follows because $s_n(A_+) = 0$ for $n > M$.
    Moreover, by \Cref{thm:c-numerical-range-selfadjoint-formula} $X P_{\epsilon} = P_{\epsilon} X = 0$.

    Now let $Y$ be the operator given by $\diag(s_{M+1}(C),s_{M+2}(C),\ldots)$ on the subspace $P_{\epsilon} P\Hil$ (or $P_{\epsilon} P^{\perp}\Hil$, whichever is infinite) and zero on the orthogonal complement in $\Hil$.
    Then $X' = X + Y \in \orbit(C)$ commutes with $P$ (so it has a direct sum decomposition), and
    \begin{align*}
      \trace(X' A) = \trace(XA) + \trace (YA) &= \trace(XA) + \trace (Y P_{\epsilon} A) \\
                                              &\ge \sup \ocnr(A) - \norm{P_{\epsilon}A} \norm{Y}_1 \\
                                              &\ge  \sup \ocnr(A) - \epsilon \norm{C}_1.
    \end{align*}
    Since $\epsilon > 0$ is arbitrary, this proves
    \begin{equation*}
      \sup \ocnr(A) \le \sup \set{ \trace(XA) \mid X = X_{P^{\vphantom{\perp}}} \oplus X_{P^{\perp}} \in \orbit(C) },
    \end{equation*}
    and therefore we must have equality.
    Finally, because $\set{ \trace(XA) \mid X = X_{P^{\vphantom{\perp}}} \oplus X_{P^{\perp}} \in \orbit(C) } \subseteq \ocnr(A)$ and $\sup \ocnr(A)$ is not attained, the equality of the suprema guarantees that $\sup \set{ \trace(XA) \mid X = X_{P^{\vphantom{\perp}}} \oplus X_{P^{\perp}} \in \orbit(C) }$ is not attained either. \qedhere
  \end{case}
\end{proof}

Of course, by replacing $A$ with $-A$ in \Cref{lem:selfadjoint-direct-sum-characterization} one immediately obtains the exact same result with the suprema replaced by infima.
Moreover, the finite dimensional counterpart of \Cref{lem:selfadjoint-direct-sum-characterization} is a known result (see \cite[Result~(4.2)]{Li-1994-LMA}), and in that case the suprema are always attained by compactness.
We will make use of both of these facts in order to establish the following theorem.

\begin{theorem}
  \label{thm:direct-sum-characterization}
  Let $C$ be a positive trace-class operator, $P$ an arbitrary projection, and suppose $A = A_{P^{\vphantom{\perp}}} \oplus A_{P^{\perp}} \in B(\Hil)$, where $A_{P^{\vphantom{\perp}}},A_{P^{\perp}}$ act on $P\Hil, P^{\perp} \Hil$, respectively.
  Then
  \begin{equation*}
    \ocnr(A_{P^{\vphantom{\perp}}} \oplus A_{P^{\perp}}) = \conv \quad \bigcup_{\mathclap{\quad C_{P^{\vphantom{\perp}}} \oplus C_{P^{\perp}} \in \orbit(C)}} \ \big( \ocnr[C_{P^{\vphantom{\perp}}}](A_{P^{\vphantom{\perp}}}) + \ocnr[C_{P^{\perp}}](A_{P^{\perp}}) \big).
  \end{equation*}
\end{theorem}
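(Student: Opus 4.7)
The plan is to prove both inclusions separately. Denote the left-hand side by $L := \ocnr(A_{P^{\vphantom{\perp}}} \oplus A_{P^{\perp}})$, write $R_0$ for the union inside $\conv$ on the right, and set $R := \conv R_0$.

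The containment $\supseteq$ is short. Whenever $C_{P^{\vphantom{\perp}}} \oplus C_{P^{\perp}} \in \orbit(C)$ and $X_{P^{\vphantom{\perp}}} \in \orbit(C_{P^{\vphantom{\perp}}})$, $X_{P^{\perp}} \in \orbit(C_{P^{\perp}})$, the operator $X_{P^{\vphantom{\perp}}} \oplus X_{P^{\perp}}$ is a positive trace-class operator whose eigenvalue sequence equals $\lambda(C)$, so by \Cref{prop:orbit-closure-equivalences} it lies in $\orbit(C)$. Hence $\trace(X_{P^{\vphantom{\perp}}} A_{P^{\vphantom{\perp}}}) + \trace(X_{P^{\perp}} A_{P^{\perp}}) = \trace((X_{P^{\vphantom{\perp}}} \oplus X_{P^{\perp}})A) \in L$, and convexity of $L$ (\Cref{cor:c-numerical-range-convex}) promotes this to $R \subseteq L$.

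For the reverse inclusion I use support functions. For each $\theta \in \reals$, the selfadjoint operator $\Re(e^{-i\theta}A)$ inherits the direct-sum structure, so \Cref{prop:c-numerical-range-basics} and \Cref{lem:selfadjoint-direct-sum-characterization} together give
\begin{equation*}
  \sup \Re(e^{-i\theta} L) = \sup \ocnr(\Re(e^{-i\theta}A)) = \sup \Re(e^{-i\theta} R_0) = \sup \Re(e^{-i\theta} R),
\end{equation*}
with the sup attained in $L$ if and only if in $R_0$ (equivalently in $R$, since sup on a convex hull is attained only by convex combinations of maximizers of the generators). Thus $L$ and $R$ are convex sets with identical support functions, so $\closure L = \closure R$, and by Hahn--Banach any $z \in L \setminus R$ must sit on a supporting line of $\closure L$ in some direction $\theta_0$. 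If that line meets $\closure L$ only at $z$, the attainment part of \Cref{lem:selfadjoint-direct-sum-characterization} places $z \in R_0 \subseteq R$ directly. Otherwise the intersection is a nondegenerate segment; rotating so $\theta_0 = 0$, write $z = \trace(YA)$ with $Y \in \orbit(C)$ maximizing $\trace(Y \Re A)$. \Cref{prop:block-diagonal-decomposition} then forces $Y$ to commute with each spectral projection $P_l$ of $\Re A$, and each $P_l$ commutes with $P$. Since $\Im A$ is also block diagonal with respect to $P$, a short trace computation gives $\trace(YA) = \trace(Y'A)$ where $Y' := PYP \oplus P^{\perp}YP^{\perp}$. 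On each finite-rank block $P_l$, $Y'|_{P_l\Hil}$ is the pinching of $Y|_{P_l\Hil} \in \ugroup(\diag(s_{n_{l-1}+1}(C),\ldots,s_{n_l}(C)))$ with respect to $P_l P \oplus P_l P^{\perp}$, and a finite-dimensional Birkhoff/Schur--Horn argument expresses this pinching as a convex combination of direct sums in the corresponding unitary orbit. Reassembling these blockwise decompositions and combining with the fixed contribution from $\trace(Y \Re A)$ writes $z$ as a convex combination of elements of $R_0$, putting $z \in R$.

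The main obstacle will be this segment case, and specifically the treatment of the possibly-infinite-rank spectral block $P_N = \chi_{\{m\}}(\Re A)$ with $m = \max \essspec(\Re A)$. There the compression $Y|_{P_N \Hil}$ only lies in an orbit rather than a finite unitary orbit, so the finite-dimensional Birkhoff-type decomposition does not directly apply. I expect to resolve this by extending the block Schur--Horn decomposition to this infinite-dimensional setting via \Cref{prop:convex-orbit-majorization}, or, failing that, by truncating $C$ to finite rank and passing to the limit using the continuity in \Cref{thm:continuity} together with the fact that the pinched eigenvalue sequence is majorized by $\lambda(C)$.
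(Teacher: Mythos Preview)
Your support-function argument for $\closure L = \closure R$ via \Cref{lem:selfadjoint-direct-sum-characterization} is correct and is actually cleaner than the paper's route, which instead truncates $C$ to finite rank, invokes the known finite-rank result of Li--Poon, and passes to the Hausdorff limit. Both methods land on the same conclusion, but yours avoids the external citation.

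The gap is in the segment case, and it is exactly where you flagged it. Your Birkhoff/Schur--Horn step on each finite block $P_l$ is fine: once you diagonalize $Y'|_{P_l\Hil}$, Hardy--Littlewood--P\'olya writes its eigenvalue tuple as a convex combination of permutations of $(s_{n_{l-1}+1}(C),\ldots,s_{n_l}(C))$, and each permutation gives a block-diagonal element of $\ugroup(C_l)$. The problem is that neither of your proposed resolutions handles the infinite block $P_N$. \Cref{prop:convex-orbit-majorization} only places the pinching in the \emph{trace-norm closed} convex hull of $\ugroup(C_N)$, not in the convex hull of direct-sum elements, so it does not yield the decomposition you need. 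And the truncation argument in \Cref{thm:continuity} only recovers equality of closures, which you have already established; it does not place a specific boundary point $z$ inside $R$. There is also a secondary issue you skate over: when $N = \infty$ the ``reassembling'' step requires summing infinitely many blockwise convex combinations, and an infinite product of finite convex combinations is not a finite convex combination.

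Both problems dissolve if you abandon the operator-level Birkhoff decomposition and argue at the trace level instead, which is what the paper does. Since each compression $A_l := P_l(\Im A)P_l$ is selfadjoint and block-diagonal with respect to $P_l P \oplus P_l P^{\perp}$, \Cref{lem:selfadjoint-direct-sum-characterization} (the finite-dimensional version for $l<N$, the full version for $l=N$) produces direct-sum elements $Y_l, Z_l \in \orbit(C_l)$ with $\trace(Z_l A_l) \le \trace(X_l A_l) \le \trace(Y_l A_l)$. Setting $Y := \bigoplus_l Y_l$ and $Z := \bigoplus_l Z_l$ gives two direct-sum elements of $\orbit(C)$ with $\Re\trace(YA) = \Re\trace(ZA) = \Re z$ and $\Im z \in [\Im\trace(ZA),\Im\trace(YA)]$, so $z \in \conv\{\trace(YA),\trace(ZA)\} \subseteq R$. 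This sandwiching handles the infinite block and the infinite-$N$ reassembly simultaneously, and it is the missing ingredient in your outline.
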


\begin{proof}
  The case when $C$ has finite rank appears in \cite[Result~(4.4)]{LP-1995-FDaaMaE}, so we will prove the result when $C$ has infinite rank.

  One inclusion is immediate.
  Indeed, given $X_{P^{\vphantom{\perp}}} \in \orbit(C_{P^{\vphantom{\perp}}}), X_{P^{\perp}} \in \orbit(C_{P^{\perp}})$ with $C_{P^{\vphantom{\perp}}} \oplus C_{P^{\perp}} \in \orbit(C)$, it is clear that $X_{P^{\vphantom{\perp}}} \oplus X_{P^{\perp}} \in \orbit(C)$.
  Therefore,
  \begin{equation*}
    \trace(X_{P^{\vphantom{\perp}}} A_{P^{\vphantom{\perp}}}) + \trace(X_{P^{\perp}} A_{P^{\perp}}) = \trace \big( (X_{P^{\vphantom{\perp}}} \oplus X_{P^{\perp}})(A_{P^{\vphantom{\perp}}} \oplus A_{P^{\perp}}) \big) \in \ocnr(A).
  \end{equation*}
  Since $C_{P^{\vphantom{\perp}}} \oplus C_{P^{\perp}}\in \orbit(C)$ was arbitrary, as were $X_{P^{\vphantom{\perp}}} \in \orbit(C_{P^{\vphantom{\perp}}})$ and $X_{P^{\perp}} \in \orbit(C_{P^{\perp}})$, we obtain
  \begin{equation*}
    \bigcup_{\mathclap{C_{P^{\vphantom{\perp}}} \oplus C_{P^{\perp}} \in \orbit(C)}} \  \big( \ocnr[C_{P^{\vphantom{\perp}}}](A_{P^{\vphantom{\perp}}}) + \ocnr[C_{P^{\perp}}](A_{P^{\perp}}) \big) \subseteq \ocnr(A).
  \end{equation*}
  By \Cref{cor:c-numerical-range-convex}, $\ocnr(A)$ is convex and so contains the convex hull of this union.

  We now prove the other inclusion.
  For convenience, we replace $C$ by $\diag(s(C))$ since $\orbit(C) = \orbit(\diag(s(C)))$.
  For $m \in \nats$ set $C_m := \diag \big( s_1(C),\ldots,s_m(C),0,0,\ldots \big)$ and notice that $C_m \xrightarrow{\norm{\bigcdot}_1} C$.
  Therefore $\ocnr[C_m](A) \to \ocnr(A)$ in the Hausdorff pseudometric by \Cref{thm:continuity}.

  For any $X_{P^{\vphantom{\perp}}} \oplus X_{P^{\perp}} \in \orbit(C)$, and let $Q_m$ denote the projection onto the span of the eigenvectors associated to $s_1(C),\ldots,s_m(C)$.
  Note that $Q_m$ commutes with $P$ and therefore we can naturally obtain $Y_{P^{\vphantom{\perp}}} \oplus Y_{P^{\perp}} \in \orbit(C_m)$ via $Y_{P^{\vphantom{\perp}}} \oplus Y_{P^{\perp}} = (X_{P^{\vphantom{\perp}}} \oplus X_{P^{\perp}}) Q_m$
  Doing so ensures that $\norm{X_i - Y_i}_1 \le \norm{C_m - C}_1$ for $i=1,2$.
  Conversely, given $Y_{P^{\vphantom{\perp}}} \oplus Y_{P^{\perp}} \in \orbit(C_m)$, both $Y_{P^{\vphantom{\perp}}},Y_{P^{\perp}}$ are finite rank and positive, therefore at least one of them has an infinite dimensional reducing subspace because one of them must act on an infinite dimensional space.
  Then adding $\diag(s_{m+1}(C),s_{m+2}(C),\ldots)$ acting on that subspace yields an operator $X_{P^{\vphantom{\perp}}} \oplus X_{P^{\perp}} \in \orbit(C)$ which again satisfies $\norm{X_i - Y_i}_1 \le \norm{C_m - C}_1$ for $i=1,2$.
  
  Hence, by \Cref{thm:continuity} the Hausdorff distance between these orbit-closed $C$-numerical ranges satisfies
  \begin{equation*}
    d_H \big( \ocnr[X_i](A_i), \ocnr[Y_i](A_i) \big) \le \norm{C_m - C}_1 \norm{A_i} \le \norm{C_m - C}_1 \norm{A}.
  \end{equation*}
  Therefore, the corresponding unions converge in Hausdorff pseudometric
  \begin{equation*}
    \quad\bigcup_{\mathclap{\quad Y_{P^{\vphantom{\perp}}} \oplus Y_{P^{\perp}} \in \orbit(C_m)}} \ \big( \ocnr[Y_{P^{\vphantom{\perp}}}](A_{P^{\vphantom{\perp}}}) + \ocnr[Y_{P^{\perp}}](A_{P^{\perp}}) \big) \xrightarrow{d_H} \quad \bigcup_{\mathclap{\quad X_{P^{\vphantom{\perp}}} \oplus X_{P^{\perp}} \in \orbit(C)}} \ \big( \ocnr[X_{P^{\vphantom{\perp}}}](A_{P^{\vphantom{\perp}}}) + \ocnr[X_{P^{\perp}}](A_{P^{\perp}}) \big).
  \end{equation*}
  Additionally, their convex hulls converge in the Hausdorff pseudometric as well.
  Since the theorem is valid when the trace-class operator is finite rank \cite[Result (4.4)]{LP-1995-FDaaMaE} we have the convergence
  \begin{align*}
    &\ocnr[C_m](A)& &\qquad=& &\conv \quad \bigcup_{\mathclap{\quad Y_{P^{\vphantom{\perp}}} \oplus Y_{P^{\perp}} \in \orbit(C_m)}} \ \big( \ocnr[Y_{P^{\vphantom{\perp}}}](A_{P^{\vphantom{\perp}}}) + \ocnr[Y_{P^{\perp}}](A_{P^{\perp}}) \big)& \\
    &\qquad \bigg\downarrow d_H & & & &\qquad\qquad\qquad\qquad\qquad\bigg\downarrow d_H& \\
    &\ocnr(A)& &\text{same closure}& &\conv \quad \bigcup_{\mathclap{\quad X_{P^{\vphantom{\perp}}} \oplus X_{P^{\perp}} \in \orbit(C)}} \ \big( \ocnr[X_{P^{\vphantom{\perp}}}](A_{P^{\vphantom{\perp}}}) + \ocnr[X_{P^{\perp}}](A_{P^{\perp}}) \big).&
  \end{align*}
  Since $\ocnr(A)$ and $\conv \big( \bigcup_{X_{P^{\vphantom{\perp}}} \oplus X_{P^{\perp}} \in \orbit(C)} \big( \ocnr[X_{P^{\vphantom{\perp}}}](A_{P^{\vphantom{\perp}}}) + \ocnr[X_{P^{\perp}}](A_{P^{\perp}}) \big) \big)$ are convex and have the same closure, they must have the same interior.
  Hence it suffices to prove any boundary point of $\ocnr(A)$ lies in the above convex hull.
  
  Now suppose $x = \trace(XA) \in \ocnr(A)$ lies on the boundary.
  By the usual rotation and translation technique, we may assume $x$ has maximal real part and $\max \essspec(\Re(A)) = 0$.
  Since $\trace(XA) = \sup \Re \ocnr(A) = \sup \ocnr(\Re A)$, we may apply \Cref{prop:block-diagonal-decomposition}.
  Let $\set{\lambda_l}_{l=1}^N$ denote the distinct nonnegative eigenvalues of $\Re A$ listed in decreasing order, and including zero if and only if $N < \infty$.
  Let $\set{P_l}_{l=1}^N$ be the associated spectral projections and set $P_0 := I - \sum_{l=1}^N P_l$.
  Let $n_0 := 0$ and for $1 \le l \le N$, $n_l := \sum_{j=1}^l \trace P_j$.
  Then \Cref{prop:block-diagonal-decomposition} guarantees that $X$ commutes with each $P_l$, so that $X = \bigoplus_{l=0}^N X_l$ where $X_l$ acts on $P_l \Hil$.
  Moreover, $X_0 = 0$ and for $1 \le l < N$, $X_l \in \ugroup(C_l)$, where $ C_l := \diag(s_{n_{l-1}+1}(C),\ldots,s_{n_l}(C))$.
  If $N < \infty$, then $X_N \in \orbit(C_N)$ where $C_N := \diag((s_n(C))_{n=n_{N-1}+1}^{n_N})$.
  
  Let the reader take note that \emph{any} operator $Y$ with properties of $X$ listed in the previous paragraph (block diagonal with respect to $P_l$ with blocks in the associated orbits) has the property that $\Re(\trace(YA)) = \sup \Re \ocnr(A)$.
  We will use this property shortly.

  Let $A_l$ denote the compression of $\Im A$ to $P_l \Hil$.
  In general, $\Im A$ will not be block diagonal with respect to these blocks because $A$ is not necessarily normal so $\Im A$ may not commute with $P_l$.
  However, $P$ commutes with $A$, and therefore with $\Re A$ and $\Im A$, which implies that it also commutes with each spectral projection $P_l$.
  Therefore, we may write each $P_l = P_l P + P_l P^{\perp}$ as a sum of projections, and also $A_l = A'_l \oplus A''_l$, where $A'_l, A''_l$ act on $P_l P \Hil$ and $P_l P^{\perp} \Hil$, respectively.

  Now,
  \begin{equation}
    \label{eq:imaginary-part-inequality}
    \Im(\trace(XA)) = \trace(X \Im A) = \sum_{l=1}^N \trace(X_l A_l) \le \sum_{l=1}^N \sup \ocnr[C_l](A_l).
  \end{equation}
  where we have omitted the $l=0$ term from the sum since $X_0 = 0$.
  For each $1 \le l < N$, the operators $C_l,A_l$ act on the finite dimensional space $P_l \Hil$, and so the supremum $\sup \ocnr[C_l](A_l)$ is attained.
  Moreover, because $A_l = A'_l \oplus A''_l$,  by \Cref{lem:selfadjoint-direct-sum-characterization} (or rather, its finite dimensional counterpart) there exists $Y_l = Y'_l \oplus Y''_l \in \ugroup(C_l)$ such that $\trace(Y_l A_l) = \sup \ocnr[C_l](A_l)$.
  If $N < \infty$, then \Cref{lem:selfadjoint-direct-sum-characterization} still allows us to obtain $Y_N = Y'_N \oplus Y''_N \in \orbit(C_N)$ such that $\trace(X_N A_N) \le \trace(Y_N A_N)$ regardless of whether or not $\sup \ocnr[C_N](A_N)$ is attained.
  Set $Y'_0 = 0 = Y''_0$.
  Then set $Y_{P^{\vphantom{\perp}}} := \bigoplus_{l=0}^N Y'_l$ and $Y_{P^{\perp}} := \bigoplus_{l=0}^N Y''_l$ and $Y := Y_{P^{\vphantom{\perp}}} \oplus Y_{P^{\perp}} \in \orbit(C)$.
  As previously remarked, $Y$ satisfies the same decomposition property as $X$, and therefore $\Re \trace (YA) = \sup \Re \ocnr(A) = \Re \trace (XA)$.
  Moreover, $\Im \trace (XA) \le \Im \trace (YA)$.

  Notice that
  \begin{equation*}
    \Im(\trace(XA)) \ge \sum_{l=1}^N \inf \ocnr[C_l](A_l).
  \end{equation*}
  Therefore, a symmetric argument to the one given in the previous paragraph allows us to produce a $Z = Z_{P^{\vphantom{\perp}}} \oplus Z_{P^{\perp}} \in \orbit(C)$ such that $\Re(\trace XA) = \Re(\trace ZA)$ and $\Im(\trace XA) \ge \Im(\trace ZA)$.
  This proves that
  \begin{equation*}
    x = \trace(XA) \in \conv \set{ \trace(YA), \trace(ZA) } \subseteq \conv \quad \bigcup_{\mathclap{\quad C_{P^{\vphantom{\perp}}} \oplus C_{P^{\perp}} \in \orbit(C)}} \quad \big( \ocnr[C_{P^{\vphantom{\perp}}}](A_{P^{\vphantom{\perp}}}) + \ocnr[C_{P^{\perp}}](A_{P^{\perp}}) \big) ,
  \end{equation*}
  as desired.
\end{proof}

In \cite{DvE-2020-LaMA}, Dirr and vom Ende introduced an analogue of the $C$-spectrum for trace-class $C$ when $A$ is compact, which they also denoted $\cspec(A)$.
We will also need a notion of the $C$-spectrum of a compact operator $A$, but ours will differ slightly from the one given by Dirr and vom Ende, and for this reason we will instead use the notation $\ocspec(A)$ and refer to it as the $\orbit(C)$-spectrum.

As in \cite{DvE-2020-LaMA}, we must invoke the concept of the \term{modified eigenvalue sequence} of a compact operator $A$.
This is the sequence $\modeig(A)$ obtain by mixing $\dim \ker A$ many zeros into the usual eigenvalue sequence $\lambda(A)$.
For the purposes of $\orbit(C)$-spectrum the order of these eigenvalues does not matter.

\begin{definition}
  \label{def:c-spectrum}
  Let $C$ be a trace-class operator of (possibly infinite) rank $N$, and let $A \in \K$.
  The \term{$\orbit(C)$-spectrum} of $A$ is the collection
  \begin{equation*}
    \ocspec(A) := \vset{ \sum_{n=1}^{\infty} \lambda_n(C) \modeig_{\pi(n)}(A) \vmid \pi : \nats \to \nats\ \text{injective}  }.
  \end{equation*}
  One should think of the $\orbit(C)$-spectrum $\ocspec(A)$ as an $\orbit(C)$-relativized analogue of the point spectrum $\ptspec(A)$.
\end{definition}

This definition of $\orbit(C)$-spectrum differs from the definition of $C$-spectrum given in \cite{DvE-2020-LaMA} only in that we allow $\pi$ to be injective instead of a permutation, and that we use the standard eigenvalue sequence of $C$ instead of the modified eigenvalue sequence.

\begin{remark}
  \label{rem:relation-cspec-to-ocspec}
  The terminology $\orbit(C)$-spectrum and notation $\ocspec(A)$ is not haphazard, but alludes to the following relationship between the $C$-spectrum, the $\orbit(C)$-spectrum and the point spectrum.
  For a normal operator $C \in \traceclass$ and $A \in \K$, by \Cref{prop:orbit-closure-equivalences}
  \begin{equation*}
    \ocspec(A) = \ \bigcup_{\mathclap{X \in \orbit(C)}} \ \cspec[X](A) = \  \bigcup_{\mathclap{0 \le n \le \infty}} \ \cspec[C \oplus \zop_n](A).
  \end{equation*}
  So, in essence, the $\orbit(C)$-spectrum is just a version of the $C$-spectrum where the size of the kernel of $C$ can vary, at least when $C$ is normal.

  Moreover, if $C$ is finite rank, then $\ocspec(A) = \cspec(A)$, and if $P$ is a rank-$1$ projection, then $\ocspec[P](A) = \cspec[P](A) = \ptspec(A)$.
\end{remark}

It is a trivial fact that the point spectrum $\ptspec(A)$ of an operator $A$ is contained in the numerical range $\nr(A)$, and by convexity $\conv \ptspec(A) \subseteq \nr(A)$.
The following proposition establishes an analogous fact for the $\orbit(C)$-spectrum and the orbit-closed $C$-numerical range.

\begin{proposition}
  \label{prop:c-spectrum-subset-ocnr}
  If $C \in \traceclass$ is normal and $A \in \K$ is upper triangular relative to some orthonormal basis, then $\ocspec(A) \subseteq \ocnr(A)$.
  If, in addition, $C$ is selfadjoint, then the inclusion $\conv \ocspec(A) \subseteq \ocnr(A)$ also holds.
\end{proposition}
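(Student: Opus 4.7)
The plan is to reduce to Dirr and vom Ende's inclusion $\cspec[X](A) \subseteq \cnr[X](A)$ (from \cite[Theorem~3.5]{DvE-2020-LaMA}, as quoted in the discussion preceding the proposition) by bootstrapping it over every element of $\orbit(C)$, using the decomposition of the $\orbit(C)$-spectrum provided by \Cref{rem:relation-cspec-to-ocspec}. The convex-hull statement will then follow directly from convexity.

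In detail, first I would fix an arbitrary $X \in \orbit(C)$. Since $C$ is normal and trace-class, \Cref{prop:orbit-closure-equivalences} guarantees that $X$ is itself normal and trace-class with $\lambda(X) = \lambda(C)$; consequently $\ugroup(X) \subseteq \orbit(X) = \orbit(C)$. Applying the cited theorem of Dirr and vom Ende to the pair $(X,A)$ --- which is legitimate because $X$ is normal and $A \in \K$ is upper triangular --- yields $\cspec[X](A) \subseteq \cnr[X](A)$, and the inclusion $\ugroup(X) \subseteq \orbit(C)$ gives $\cnr[X](A) \subseteq \ocnr(A)$. Taking the union over all $X \in \orbit(C)$ and invoking the first equality of \Cref{rem:relation-cspec-to-ocspec} produces
\begin{equation*}
  \ocspec(A) \ = \ \bigcup_{X \in \orbit(C)} \cspec[X](A) \ \subseteq \ \ocnr(A),
\end{equation*}
which is the first assertion. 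For the second, when $C$ is in addition selfadjoint, $\ocnr(A)$ is convex by \Cref{cor:c-numerical-range-convex}, so the just-established inclusion $\ocspec(A) \subseteq \ocnr(A)$ immediately upgrades to $\conv \ocspec(A) \subseteq \ocnr(A)$.

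I do not foresee a substantial obstacle, since both ingredients are already in hand and the argument is essentially a gluing via the identity $\orbit(X) = \orbit(C)$ for normal $X \in \orbit(C)$. If one wished for a self-contained proof avoiding the Dirr--vom Ende result, the main technical nuisance would be the direct construction of $X \in \orbit(C)$ realizing a given sum $\sum_n \lambda_n(C)\modeig_{\pi(n)}(A)$ as $\trace(XA)$: nonzero values of $\modeig_{\pi(n)}(A)$ can be matched to diagonal entries of $A$ in its upper triangular basis via Ringrose's theorem, but indices $n$ with $\modeig_{\pi(n)}(A) = 0$ force one to place eigenvectors of $X$ inside $\ker A$ while maintaining orthonormality with the vectors chosen in the first case --- a solvable but fiddly combinatorial and geometric problem that the reduction above entirely sidesteps.
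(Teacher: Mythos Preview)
Your reduction is correct: for each $X \in \orbit(C)$ you get normality of $X$ and $\orbit(X)=\orbit(C)$ from \Cref{prop:orbit-closure-equivalences}, so Dirr--vom Ende's $\cspec[X](A)\subseteq\cnr[X](A)$ feeds into $\ocnr(A)$, and the union over $X$ together with \Cref{rem:relation-cspec-to-ocspec} gives the claim. The selfadjoint addendum via \Cref{cor:c-numerical-range-convex} is fine.

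The paper takes a different, self-contained route. Rather than invoking \cite{DvE-2020-LaMA}, it constructs the witnessing $X\in\orbit(C)$ directly: in the orthonormal basis $\{e_n\}$ making $A$ upper triangular, Ringrose's theorem identifies the diagonal entries of $A$ with (a permutation of) $\modeig(A)$, and one simply sets $X=\diag(x_n)$ with $x_n=\lambda_{\pi^{-1}(n)}(C)$ for $n\in\pi(\nats)$ and $x_n=0$ otherwise. Then $X\in\orbit(C)$ by \Cref{prop:orbit-closure-equivalences}, and since $X$ is diagonal, $\trace(XA)$ reads off only the diagonal of $A$, yielding $\sum_n \lambda_n(C)\modeig_{\pi(n)}(A)$. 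Your approach buys brevity and makes transparent that the proposition is formally a consequence of the Dirr--vom Ende inclusion plus the orbit description; the paper's approach buys independence from the external reference and exhibits the witnessing operator explicitly.

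One small correction to your closing remark: the direct construction is not actually fiddly, and in particular does \emph{not} require placing eigenvectors of $X$ inside $\ker A$. Because $X$ is built diagonally in the \emph{upper triangular} basis of $A$ (not an eigenbasis of $A$), the indices $n$ with $\modeig_{\pi(n)}(A)=0$ are handled automatically---they just correspond to diagonal positions of $A$ equal to zero, and $x_n$ can be whatever the construction dictates with no orthogonality constraint beyond the basis already being orthonormal. The ``combinatorial and geometric problem'' you anticipate does not arise.
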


\begin{proof}
  Suppose that $A \in \K$ is upper triangular relative to an orthonormal basis $\set{e_n}_{n=1}^{\infty}$ for $\Hil$.
  Then it is well known that the diagonal entries of $A$ are precisely the (suitably permuted) modified eigenvalue sequence $\modeig(A)$.
  Indeed, the sequence of subspaces
  \begin{equation*}
    \set{0} \subseteq \spans \set{e_1} \subseteq \spans \set{e_1,e_2} \subseteq \spans \set{e_1,e_2,e_3} \subseteq \cdots \subseteq \Hil,
  \end{equation*}
  forms a triangularizing chain for $A$, and so the nonzero diagonal entries are precisely the eigenvalues by Ringrose's Theorem, and they are repeated according to algebraic multiplicity (see \cite[Theorems~7.2.3 and 7.2.9]{RR-2000}).

  Then take any injective $\pi : \nats \to \nats$ and define a sequence $(x_n)$ by
  \begin{equation*}
    x_n :=
    \begin{cases}
      \eig_{\pi^{-1}(n)}(C) & \text{if } n \in \pi(\nats), \\
      0 & \text{otherwise.} \\
    \end{cases}
  \end{equation*}
  Then since $C$ is normal, by \Cref{prop:orbit-closure-equivalences} $X := \diag(x_n) \in \orbit(C)$.
  Moreover,
  \begin{equation*}
    \trace(XA) = \sum_{n=1}^{\infty} x_n \modeig_n(A) = \sum_{n \in \pi(\nats)} \eig_{\pi^{-1}(n)}(C) \modeig_n(A) = \sum_{n=1}^{\infty} \eig_n(C) \modeig_{\pi(n)}(A).
  \end{equation*}
  Since $\pi$ was arbitrary, $\ocspec(A) \subseteq \ocnr(A)$.

  Finally, if $C$ is selfadjoint, then by \Cref{cor:c-numerical-range-convex}, $\ocnr(A)$ is convex and therefore $\conv \ocspec(A) \subseteq \ocnr(A)$.
\end{proof}

Before we prove our last main theorem in this section (\Cref{thm:normal-convex-c-spectrum}), which says that $\conv \ocspec(A) = \ocnr(A)$ when $A \in \K$ is normal and $C \in \traceclass^+$, we need lemmas corresponding to two special cases: $A$ selfadjoint, and $A$ normal with finite spectrum.

\begin{lemma}
  \label{lem:selfadjoint-convex-c-spectrum}
  If $A \in \K^{sa}$ and $C \in \traceclass^+$, then $\ocnr(A) = \conv \ocspec(A)$.
\end{lemma}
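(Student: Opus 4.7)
My plan is to identify both $\ocnr(A)$ and $\conv \ocspec(A)$ as real intervals and show that they share the same endpoints with matching attainment. Both are convex subsets of $\reals$: for $\ocnr(A)$, realness follows from \Cref{prop:c-numerical-range-basics}\ref{item:hermitian-ocnr} (applied with $A = \Re A$ since $A$ is selfadjoint) and convexity from \Cref{cor:c-numerical-range-convex}; for $\conv \ocspec(A)$ this is immediate. The inclusion $\conv \ocspec(A) \subseteq \ocnr(A)$ follows from \Cref{prop:c-spectrum-subset-ocnr}, since $A \in \K^{sa}$ is diagonal in some orthonormal basis (hence upper triangular there) and $C$, being positive, is selfadjoint. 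Thus it suffices to show $\sup \ocspec(A) = \sup \ocnr(A)$ and $\inf \ocspec(A) = \inf \ocnr(A)$, with attainment matching in each case.

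By the symmetry $A \leftrightarrow -A$ (which sends $\ocspec(A) \to -\ocspec(A)$ and $\ocnr(A) \to -\ocnr(A)$), it suffices to handle the supremum. Since $\max \essspec(A) = 0$ for $A \in \K^{sa}$, \Cref{thm:c-numerical-range-selfadjoint-formula} together with \Cref{prop:c-numerical-range-maximum} gives $\sup \ocnr(A) = \sum_{n=1}^{\infty} s_n(C)\, s_n(A_+)$, attained if and only if $\rank C \le \rank A_+ + \dim \ker A$. I will construct an injective $\pi : \nats \to \nats$ so that $\sum_{n=1}^{\infty} s_n(C)\, \modeig_{\pi(n)}(A)$ equals, or approximates arbitrarily well, this supremum.

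On indices $1 \le n \le \rank A_+$, let $\pi$ send $n$ to a position of $\modeig(A)$ holding the $n$\textsuperscript{th} largest positive eigenvalue of $A$, so $\modeig_{\pi(n)}(A) = s_n(A_+)$. If $\rank C \le \rank A_+ + \dim \ker A$, let $\pi$ send the remaining indices into the $\dim \ker A$ zero positions of $\modeig(A)$ arising from $\ker A$; the sum then equals $\sup \ocnr(A)$ exactly, so both suprema are attained. Otherwise $\rank C > \rank A_+ + \dim \ker A$, which forces $\rank A_- = \infty$ (a finite-rank $A$ would give $\dim \ker A = \infty$, contradicting the inequality), so the negative eigenvalues of $A$ form an infinite null sequence. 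Given $\eta > 0$, only finitely many have modulus at least $\eta$, so $\pi$ can be extended injectively by sending the remaining indices into positions where $\abs{\modeig_{\pi(n)}(A)} < \eta$. Then
\begin{equation*}
  \sum_{n=1}^{\infty} s_n(C)\, \modeig_{\pi(n)}(A) \ge \sum_{n=1}^{\infty} s_n(C)\, s_n(A_+) - \eta\, \trace C,
\end{equation*}
so $\sup \ocspec(A) = \sup \ocnr(A)$. Moreover, were this value attained by some $\pi$, the corresponding diagonal operator $X$ (with $s_n(C)$ placed in the $\pi(n)$\textsuperscript{th} diagonal position) would lie in $\orbit(C)$ by \Cref{prop:orbit-closure-equivalences} and realize $\sup \ocnr(A)$ as $\trace(XA) \in \ocnr(A)$, contradicting non-attainment from \Cref{thm:c-numerical-range-selfadjoint-formula}. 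Hence neither supremum is attained.

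The main obstacle is the case split on attainment of $\sup \ocnr(A)$: in the non-attained case one must both verify that $\rank A_- = \infty$ so that the approximation via small-modulus negative eigenvalues is available, and rule out an exact realization in $\ocspec(A)$ (which would otherwise contradict \Cref{thm:c-numerical-range-selfadjoint-formula}).
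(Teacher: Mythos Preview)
Your proof is correct and follows essentially the same route as the paper: identify both sets as real intervals, use \Cref{prop:c-spectrum-subset-ocnr} for one inclusion, then match the supremum (and infimum by symmetry) via the formula from \Cref{thm:c-numerical-range-selfadjoint-formula} and \Cref{prop:c-numerical-range-maximum}, splitting on whether the supremum is attained. The only cosmetic differences are that you phrase the attainment condition as $\rank C \le \rank A_+ + \dim\ker A$ (equivalently the paper's $\trace\chi_{[0,\infty)}(A)$), and you verify non-attainment on the $\ocspec$ side by an explicit contradiction rather than relying on the inclusion $\ocspec(A)\subseteq\ocnr(A)$.
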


\begin{proof}
  Since $A \in \K^{sa}$ is diagonalizable, by \Cref{prop:c-spectrum-subset-ocnr} we only need to prove the inclusion $\ocnr(A) \subseteq \conv \ocspec(A)$.

  Notice that $\ocnr(A)$ is an interval since it is convex and contained in $\reals$.
  We will prove that when $\sup \ocnr(A)$ is attained then it is an element of $\ocspec(A)$, and when the supremum is not attained, $\ocspec(A)$ contains elements arbitrarily close to $\sup \ocnr(A)$.
  Of course, symmetric arguments apply to the infimum, thereby establishing the desired equality $\ocnr(A) = \conv \ocspec(A)$.

  By \Cref{thm:c-numerical-range-selfadjoint-formula} and \Cref{prop:c-numerical-range-maximum} we know that
  \begin{equation*}
    \sup \ocnr(A) = \sum_{n=1}^{\infty} s_n(C) s_n(A_+),
  \end{equation*}
  and that this supremum is attained if and only if $N := \trace \chi_{[0,\infty)}(A) \ge \rank C$.
  Moreover, since for $1 \le n \le N$, $s_n(A_+)$ is an eigenvalue for $A$, when the inequality $N \ge \rank C$ holds, we obtain
  \begin{equation*}
    \sup \ocnr(A) = \sum_{n=1}^{\rank C} s_n(C) s_n(A_+) \in \ocspec(A).
  \end{equation*}

  In the case when $\sup \ocnr(A)$ is not attained, we know that $N < \rank C$ (so $N < \infty$), and therefore $\chi_{[0,\infty)}(A)$ is a finite projection.
  Since $0 \in \essspec(A)$, for every $\epsilon > 0$, the projection $\chi_{(-\epsilon,0)}(A)$ is infinite.
  Therefore $A$ has infinitely many arbitrarily small negative eigenvalues.
  Let $\pi : \nats \to \nats$ be an injective function such that $\modeig_{\pi(n)}(A) = s_n(A_+)$ for $1 \le n \le N$ and for $n > N$, $\frac{-\epsilon}{\sum_{k=N+1}^{\infty} s_k(C)} < \modeig_{\pi(n)}(A) < 0$.
  Multiplying this inequality by $s_n(C)$ and summing over $n > N$ yields
  $-\epsilon < \sum_{n=N+1}^{\infty} s_n(C) \modeig_{\pi(n)}(A) < 0$.
  Therefore,
  \begin{align*}
    \sup \ocnr(A) - \epsilon &= \sum_{n=1}^N s_n(C) s_n(A_+) - \epsilon \\
                             &< \sum_{n=1}^N s_n(C) \modeig_{\pi(n)}(A) + \sum_{n=N+1}^{\infty} s_n(C) \modeig_{\pi(n)}(A) \\
                             &= \sum_{n=1}^{\infty} s_n(C) \modeig_{\pi(n)}(A) \in \ocspec(A).
  \end{align*}
  Therefore $\ocspec(A)$ contains elements which are arbitrarily close to $\sup \ocnr(A)$.

  As remarked at the beginning of the proof, symmetric arguments hold for $\inf \ocnr(A)$, and therefore $\ocnr(A) = \conv \ocspec(A)$.
\end{proof}

\begin{lemma}
  \label{lem:finite-direct-sum-c-spectrum}
  If $A \in \K$ is normal with finite spectrum and $C \in \traceclass^+$, then $\ocnr(A) = \conv \ocspec(A)$.
\end{lemma}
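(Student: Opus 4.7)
The plan is to reduce the lemma to the direct-sum characterization in \Cref{thm:direct-sum-characterization} applied to the spectral decomposition of $A$. Because $A \in \K$ is normal with finite spectrum $\set{\lambda_1, \ldots, \lambda_N}$, I will use the spectral projections $P_l := \chi_{\set{\lambda_l}}(A)$ (mutually orthogonal and summing to $I$) to write $A = \bigoplus_{l=1}^N \lambda_l I_{P_l\Hil}$. Each $P_l$ is finite-rank when $\lambda_l \neq 0$, and the block for $\lambda_l = 0$ (if present) may have arbitrary dimension; in particular, when $\dim \Hil = \infty$ the finiteness of $\spec(A)$ forces $0 \in \spec(A)$ with $\dim \ker A = \infty$.

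The first main step is to iterate \Cref{thm:direct-sum-characterization} by induction on $N$, peeling off one block at a time, to obtain
\begin{equation*}
  \ocnr(A) = \conv \bigcup_{\bigoplus_l C_l \in \orbit(C)} \sum_{l=1}^N \ocnr[C_l](\lambda_l I_{P_l\Hil}).
\end{equation*}
The inductive step flattens nested $\conv$'s using the Minkowski identity $\conv(X+Y) = \conv X + \conv Y$, and combines orbit conditions via \Cref{prop:orbit-closure-equivalences} (so that $C_1 \oplus C' \in \orbit(C)$ together with $\bigoplus_{l \geq 2} C_l \in \orbit(C')$ yields $\bigoplus_{l=1}^N C_l \in \orbit(C)$). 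Since each $\lambda_l I_{P_l\Hil}$ is a scalar operator, \Cref{prop:c-numerical-range-basics}\ref{item:similarity-preserving-ocnr} collapses every factor to a single point $\ocnr[C_l](\lambda_l I_{P_l\Hil}) = \set{\lambda_l \trace C_l}$, giving
\begin{equation*}
  \ocnr(A) = \conv \vset{ \sum_{l=1}^N \lambda_l \trace C_l \vmid \bigoplus\nolimits_{l=1}^N C_l \in \orbit(C) }.
\end{equation*}

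The second main step is to identify this inner set with $\ocspec(A)$. For the forward inclusion, given $\bigoplus_l C_l \in \orbit(C)$, \Cref{prop:orbit-closure-equivalences} yields $\bigsqcup_l \lambda(C_l) = \lambda(C)$ as multisets; I construct an injection $\pi : \nats \to \nats$ routing each index $n$ (with $s_n(C)$ an eigenvalue of $C_l$) into an index of $\modeig(A)$ carrying the value $\lambda_l$. Injectivity is preserved because the multiplicity of $\lambda_l$ in $\modeig(A)$ is $\trace P_l \geq \rank C_l$. For the reverse inclusion, given injective $\pi$, I partition $\set{s_n(C)}$ according to which $\lambda_l$ equals $\modeig_{\pi(n)}(A)$ and build each $C_l$ on $P_l\Hil$ with those prescribed eigenvalues, parking any remaining eigenvalues of $C$ (those $s_n(C)$ whose index was mapped to a zero slot of $\modeig(A)$, or unused indices when $\dim \Hil < \infty$) inside the kernel block. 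Combined with \Cref{prop:c-spectrum-subset-ocnr} (applied to the diagonalizing basis of $A$) for the inclusion $\conv \ocspec(A) \subseteq \ocnr(A)$, this will yield $\ocnr(A) = \conv \ocspec(A)$.

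The main obstacle I anticipate is the bookkeeping in the identification step: verifying that the injectivity of $\pi$ is exactly matched by the rank and dimension constraints on the $C_l$, with careful case analysis based on whether $0 \in \spec(A)$ and on $\dim \ker A$. The iteration of the direct-sum theorem, while essentially formal, also requires checking that the Minkowski identity interacts correctly with convex hulls of arbitrary unions.
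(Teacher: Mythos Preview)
Your proposal is correct and follows essentially the same route as the paper: apply the direct-sum characterization (\Cref{thm:direct-sum-characterization}) to the spectral decomposition $A = \bigoplus_l \lambda_l I_{P_l\Hil}$, reduce to block-diagonal $X = \bigoplus_l X_l \in \orbit(C)$, and then identify $\trace(XA) = \sum_l \lambda_l \trace X_l$ with an element of $\ocspec(A)$, invoking \Cref{prop:c-spectrum-subset-ocnr} for the reverse inclusion. The only differences are presentational: you make the $N$-fold iteration of \Cref{thm:direct-sum-characterization} explicit (the paper applies it in one stroke), and you argue the identification $\set{\sum_l \lambda_l \trace C_l} = \ocspec(A)$ abstractly via partitioning indices, whereas the paper simply diagonalizes each $X_l$ in a basis for $P_l\Hil$ and reads off the injection $\pi$ from the resulting diagonal of $X$; your reverse inclusion $\ocspec(A) \subseteq \set{\sum_l \lambda_l \trace C_l}$ is redundant given \Cref{prop:c-spectrum-subset-ocnr}, and your worry about ``unused indices when $\dim\Hil < \infty$'' is moot under the paper's standing infinite-dimensional hypothesis.
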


\begin{proof}
  Let $\spec(A) = \set{\lambda_1,\ldots,\lambda_m}$ listed in order of decreasing modulus, and let $P_1,\ldots,P_m$ be the corresponding spectral projections.
  Of course, $0 \in \spec(A)$, so after relabeling we may assume $\lambda_m = 0$, and therefore $P_m$ is the only infinite projection among the list since $A \in \K$.

  Now $A = \bigoplus_{j=1}^m \lambda_j I_{P_j \Hil}$, by \Cref{thm:direct-sum-characterization}, every element of $\ocnr(A)$ is a convex combination of terms of the form $\trace(XA)$ where $X = \bigoplus_{j=1}^m X_j \in \orbit(C)$ and $X_j$ acts on $P_j \Hil$.
  We claim that any such term lies in $\ocspec(A)$.
  Indeed, suppose that for $1 \le j < m$, $\set{e_k}_{k=n_{j-1} + 1}^{n_j}$ is a basis for $P_j \Hil$ which diagonalizes $X_j$, and that $\set{e_k}_{k=n_{m-1}+1}^{\infty}$ is a basis for $P_m \Hil$ which diagonalizes $X_m$.

  Since $X \in \orbit(C)$ is diagonal relative to this orthonormal basis $\set{e_k}_{k=1}^{\infty}$, the nonzero terms of its diagonal sequence $(d_k)_{k=1}^{\infty}$ must consist precisely of the nonzero terms of $s(C)$.
  Moreover, relative to this basis, $A$ is already diagonalized and its diagonal is precisely $\eig(A) = \modeig(A)$.
  Let $\pi : \nats \to \nats$ be an injective function such that $d_{\pi(k)} = s_k(C)$.
  Then we find
  \begin{equation*}
    \trace(XA) = \sum_{k=1}^{\infty} d_k \modeig_k(A) = \sum_{k \in \pi(\nats)} d_k \modeig_k(A) = \sum_{n = 1}^{\infty} d_{\pi(n}) \modeig_n(A) = \sum_{n=1}^{\infty} s_n(C) \modeig_{\pi(n)}(A) \in \ocspec(A).
  \end{equation*}
  Since any element of $\ocnr(A)$ is a convex combination of such terms, we obtain the inclusion $\ocnr(A) \subseteq \conv \ocspec(A)$, and equality follows from \Cref{prop:c-spectrum-subset-ocnr}.
\end{proof}

\begin{theorem}
  \label{thm:normal-convex-c-spectrum}
  If $C \in \traceclass^+$ is a positive trace-class operator and $A \in \K$ is compact normal, then the orbit-closed $C$-numerical range and the convex hull of the $\orbit(C)$-spectrum coincide.
  That is,
  \begin{equation*}
    \ocnr(A) = \conv \ocspec(A).
  \end{equation*}
\end{theorem}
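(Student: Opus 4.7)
The inclusion $\conv \ocspec(A) \subseteq \ocnr(A)$ is immediate from \Cref{prop:c-spectrum-subset-ocnr} (noting that $A$ compact normal is diagonalizable and so upper triangular in an eigenbasis, while $C \in \traceclass^+$ is selfadjoint) combined with the convexity of $\ocnr(A)$ from \Cref{cor:c-numerical-range-convex}.

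For the reverse inclusion, the plan is to handle interior and boundary points of $\ocnr(A)$ separately, reducing each case to the two previously handled results: \Cref{lem:finite-direct-sum-c-spectrum} (finite spectrum) and \Cref{lem:selfadjoint-convex-c-spectrum} (compact selfadjoint). For interior points, it suffices to establish the closure equality $\closure{\ocnr(A)} = \closure{\conv \ocspec(A)}$, after which the interiors of these two convex subsets of $\complex$ automatically coincide. This closure equality follows by approximating $A$ in norm by its finite-spectrum truncations $A_N := AP_N$, where $P_N$ sums the spectral projections for the $N$ largest-modulus distinct eigenvalues, applying \Cref{lem:finite-direct-sum-c-spectrum} and \Cref{thm:continuity}, and matching each element of $\ocspec(A_N)$ with a nearby element of $\ocspec(A)$ by rerouting the ``collapsed'' eigenvalues of $C$ onto either the kernel of $A$ or onto the small-modulus eigenvalues of $A$, with error bounded by $\norm{C}_1 |\mu_{N+1}|$.

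For boundary points $x = \trace(XA) \in \ocnr(A)$, I would apply the rotate-and-take-real-part reduction to assume $\Re x = \sup \ocnr(\Re A)$ is attained by $X$. Since $A$ is normal, the spectral projections $\{P_l\}$ of $\Re A$ commute with $A$, so \Cref{prop:block-diagonal-decomposition} forces $X$ to be block-diagonal with respect to $\{P_l\}$, with each compression $X|_{P_l}$ unitarily equivalent to an explicit finite diagonal $C_l$ whose eigenvalues form a contiguous segment of $s(C)$. Consequently $\trace(XA) = \sum_l \trace(X|_{P_l} A|_{P_l})$. On each finite-rank block ($l < N$), $A|_{P_l}$ is a finite-dimensional normal operator, so \Cref{lem:finite-direct-sum-c-spectrum} yields $\trace(X|_{P_l} A|_{P_l}) \in \conv \ocspec[C_l](A|_{P_l})$; on the possibly-infinite-dimensional final block (when $N < \infty$), $A|_{P_N}$ has vanishing real part and is therefore $i$ times a compact selfadjoint operator, to which \Cref{lem:selfadjoint-convex-c-spectrum} applies via \Cref{prop:c-numerical-range-basics}\ref{item:similarity-preserving-ocnr}. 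Because distinct blocks correspond to disjoint subsets of $\spec(A)$, the injections $\pi_l$ for each block concatenate into a single injective $\pi : \nats \to \nats$, placing $\trace(XA)$ in $\conv \ocspec(A)$.

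The main obstacle is the case of infinitely many blocks ($N = \infty$, which occurs when $\Re(e^{i\theta}A)$ has infinitely many distinct positive eigenvalues) combined with $\rank C = \infty$: the series $\trace(XA) = \sum_l \trace(X|_{P_l} A|_{P_l})$ is then genuinely infinite, and one must verify that the resulting point, a priori only an infinite iterated convex combination of $\ocspec(A)$-vertices, is actually a \emph{finite} convex combination rather than merely a limit point. Resolving this will likely require combining Carath\'eodory in $\complex$ (each block contribution is a convex combination of at most three $\ocspec[C_l](A|_{P_l})$-vertices) with a diagonalization/compactness argument exploiting the rigidity imposed by the boundary-point hypothesis and the two-dimensionality of $\complex$.
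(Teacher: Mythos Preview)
Your overall architecture matches the paper's: one inclusion from \Cref{prop:c-spectrum-subset-ocnr}; for the reverse, finite-spectrum truncations plus \Cref{thm:continuity} and \Cref{lem:finite-direct-sum-c-spectrum} to get $\closure{\ocnr(A)} = \closure{\conv\ocspec(A)}$ (hence equal interiors), and then a separate boundary-point argument via rotation and \Cref{prop:block-diagonal-decomposition}.

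The obstacle you flag for the boundary case when $N = \infty$ is real for your approach, and the paper resolves it differently from what you sketch. You propose applying \Cref{lem:finite-direct-sum-c-spectrum} to each finite block $A|_{P_l}$, which writes each $\trace(X_l A_l)$ as a (possibly three-term) convex combination of points in $\ocspec[C_l](A_l)$, and then you face the genuine problem of summing infinitely many convex combinations. The paper instead exploits the fact that on each block $P_l$ the real part $\Re A_l = \lambda_l I_{P_l\Hil}$ is \emph{scalar}, so $\trace(X_l A_l) = \lambda_l \trace C_l + i\,\trace(X_l \Im A_l)$ has a \emph{fixed} real part and only the imaginary part varies. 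Applying \Cref{lem:selfadjoint-convex-c-spectrum} (not \Cref{lem:finite-direct-sum-c-spectrum}) to the compact selfadjoint operator $\Im A_l$ on every block produces $z_l \le \trace(X_l \Im A_l) \le y_l$ with $z_l, y_l \in \ocspec[C_l](\Im A_l)$, hence $z_l' := \lambda_l \trace C_l + i z_l$ and $y_l' := \lambda_l \trace C_l + i y_l$ both lie in $\ocspec[C_l](A_l)$. Summing over all $l$ gives two single points $z := \sum_l z_l'$ and $y := \sum_l y_l'$ in $\ocspec(A)$ (the block injections concatenate, as you observed), and $\trace(XA)$ lies on the vertical segment $[z,y]$. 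Thus $\trace(XA)$ is a convex combination of exactly two elements of $\ocspec(A)$, regardless of whether $N$ is finite or infinite; no Carath\'eodory or compactness argument is needed.
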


\begin{proof}
  Since a normal operator $A \in \K$ is diagonalizable, by \Cref{prop:c-spectrum-subset-ocnr} we only need to prove the inclusion $\ocnr(A) \subseteq \conv \ocspec(A)$.

  Consider a basis diagonalizing $A$, so that $A = \diag(\modeig(A))$.  Then for $m \in \nats$ we define the finite rank operators $A_m := \diag \big( \modeig_1(A), \ldots, \modeig_m(A), 0, \ldots \big)$ and notice $A_m \xrightarrow{\norm{\bigcdot}} A$.
  Therefore $\ocnr(A_m) \to \ocnr(A)$ in the Hausdorff pseudometric by \Cref{thm:continuity}.

  Now $A_m$ is a normal compact operator with finite spectrum, so by \autoref{lem:finite-direct-sum-c-spectrum} we obtain $\conv \ocspec(A_m) = \ocnr(A_m)$.

  We now prove that $\ocspec(A_m) \to \ocspec(A)$.
  Let $\epsilon > 0$, and choose $M \in \nats$ such that for all $m \ge M$, $\norm{A_m - A} < \frac{\epsilon}{\norm{C}_1}$.
  Let $\pi : \nats \to \nats$ be any injective function.
  Then
  \begin{align*}
    \sum_{n=1}^{\infty} s_n(C) \modeig_{\pi(n)}(A_m) - \sum_{n=1}^{\infty} s_n(C) \modeig_{\pi(n)}(A)
    &= \sum_{n=1}^{\infty} s_n(C) \big( \modeig_{\pi(n)}(A_m) - \modeig_{\pi(n)}(A) \big) \\
    &\le \norm{C}_1 \norm{A_m - A} < \norm{C}_1 \frac{\epsilon}{\norm{C}_1} = \epsilon.
  \end{align*}

  Consequently, $d_H \big( \ocspec(A_m), \ocspec(A) \big) < \epsilon$, so $\ocspec(A_m) \to \ocspec(A)$.
  Moreover, this implies $\conv \ocspec(A_m)$ converges to $\conv \ocspec(A)$ in the Hausdorff pseudometric as well.
  Thus
  \begin{equation*}
    \conv \ocspec(A) \xleftarrow{d_H} \conv \ocspec[C_m](A) = \ocnr[C_m](A) \xrightarrow{d_H} \ocnr(A),
  \end{equation*}
  and hence $\closure{\conv \ocspec(A)} = \closure{\ocnr(A)}$.

  By the above, it suffices to prove that every element of the boundary of $\ocnr(A)$ is also an element of $\conv \ocspec(A)$.
  The argument is very similar to the one in the proof of \Cref{thm:direct-sum-characterization}, except we apply \Cref{lem:selfadjoint-convex-c-spectrum} in place of \Cref{lem:selfadjoint-direct-sum-characterization}.

  Suppose that $\trace(XA) \in \ocnr(A)$ lies on the boundary.
  By rotating, we may suppose that $\Re \trace(XA) = \sup \Re \ocnr(A) = \sup \ocnr(\Re A)$.
  Then by \Cref{prop:block-diagonal-decomposition} we get spectral projections $\set{ P_l }_{l=1}^N$ associated to the distinct nonnegative eigenvalues $\set{ \lambda_l }_{l=1}^N$ of $\Re A$, including zero if and only if $N < \infty$.
  We set $P_0 := I - \sum_{l=1}^N P_l$ and $n_0 := 0$, and $n_l := \sum_{j=1}^l \trace P_l$.
  In addition, $X$ commutes with each $P_l$ and if $X_l$ denotes the compression of $X$ to $P_l \Hil$, then $X = \bigoplus_{l=0}^N X_l$.
  Moreover, $X_0 = 0$ and for $1 \le l < N$ we have $X_l \in \ugroup(C_l)$ where $C_l := \diag(s_{n_{l-1}+1}(C),\ldots,s_{n_l}(C))$.
  If $N < \infty$, then $X_N \in \orbit(C_N)$ where $C_N := \diag(s_{n_{N-1} + 1}(C),s_{n_{N-1} + 2}(C),\ldots, s_{n_N}(C))$.
   
  Since $A$ is normal, it is clear that $\Re A, \Im A$ commute, and therefore $\Im A$ commutes with each $P_l$, and so $A$ commutes with $P_l$ too.
  Let $A_l$ be the compression of $A$ to $P_l \Hil$, so $A = \bigoplus_{l=0}^N A_l$.
  Now for each $1 \le l \le N$, by \Cref{lem:selfadjoint-convex-c-spectrum} we know that there are elements $y_l, z_l \in \ocspec[C_l](\Im A_l)$ such that $z_l \le \trace(X_l \Im A_l) \le y_l$.
  Moreover, since $\Re A_l = \lambda_l I_{P_l \Hil}$, then $z'_l := \lambda_l \trace C_l + i z_l, y'_l := \lambda_l \trace C_l + i y_l \in \ocspec[C_l](A_l)$.
  Notice that $\trace(X_l A_l) = \lambda_l \trace C_l + i \trace(X_l \Im A_l)$, hence $\trace (X_l A_l) \in [z'_l, y'_l]$.
  Summing over $1 \le l \le N$, we obtain
  \begin{equation*}
      \trace(XA) = \sum_{l=1}^N \trace(X_l A_l) \in [z,y], \quad\text{where}\quad  z := \sum_{l=1}^N z'_l \quad \text{and}\quad  y := \sum_{l=1}^N y'_l.
  \end{equation*}
  Finally, $z,y \in \sum_{l=1}^N \ocspec[C_l](A_l) \subseteq \ocspec[\bigoplus_{l=1}^N C_l] \big( \bigoplus_{l=1}^N A_l \big) \subseteq \ocspec(A)$, and therefore we obtain $\trace(XA) \in [z,y] \subseteq \conv \ocspec(A)$.
\end{proof}

\section{Convexity of the $C$-numerical range.}
\label{sec:c-numerical-range-convexity}

In their paper \cite{DvE-2020-LaMA}, Dirr and vom Ende asked whether the $C$-numerical range $\cnr(A)$ is convex when $C$ is normal with collinear eigenvalues.
We will now show that when $A$ is diagonalizable and $C$ is positive and has either trivial or infinite dimensional kernel, then this is indeed the case (see \Cref{cor:c-numerical-range-convex-diagonalizable}).
We make no claim that these circumstances are exhaustive, but we are limited by the proof technique and the underlying results.
Nevertheless, we felt that a partial answer to the question of the convexity of $\cnr(A)$ would contribute some value.

Let $E: B(\Hil) \to \mathcal{D}$ denote the canonical trace-preserving conditional expectation onto a diagonal masa $\mathcal{D}$.
In other words, $E$ is the operation of ``taking the main diagonal.''
When applied to the unitary orbit of an operator $C$, there is a natural bijection between $E(\ugroup(C))$ and the set of all diagonal sequences of $C$ as the orthonormal basis giving rise to the matrix representation of $C$ varies.
The study of diagonal of operators has a rich history in the literature.
For a survey, see \cite{LW-2020-OT27}.

The following\footnote{In \cite{KW-2010-JFA}, this is stated in terms of the so-called \emph{partial isometry orbit, $\mathcal{V(C)}$}, but \cite[Proposition~2.1.12]{Lor-2016} guarantees that $\closure[\norm{\bigcdot}]{\ugroup(C)} = \mathcal{V}(C)$ for $C \in \K^+$.} gives a complete characterization of diagonals of compact operators modulo the dimension of the kernel.

\begin{proposition}[\protect{\cite[Proposition~6.4]{KW-2010-JFA}}]
  \label{prop:compact-schur-horn}
  For a positive compact operator $C$,
  \begin{equation*}
    E\Big(\closure[\norm{\bigcdot}]{\ugroup(C)}\Big) = \{ X \in \mathcal{D} \cap \K^+ \mid s(X) \maj s(C) \}.
  \end{equation*}
\end{proposition}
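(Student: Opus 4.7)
The plan is to establish the two inclusions of the claimed equality separately, with the reverse inclusion requiring substantially more work.

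For the forward inclusion, I would start with $Y \in \closure[\norm{\bigcdot}]{\ugroup(C)}$ and invoke \Cref{prop:orbit-closure-equivalences} to conclude $Y \in \K^+$ with $s(Y) = s(C)$. Setting $X := E(Y)$, the diagonal entries $d_n := \angles{Ye_n,e_n}$ are nonnegative and tend to zero since $Y \in \K$ and $e_n \to 0$ weakly, so $X \in \mathcal{D} \cap \K^+$ with $s(X)$ the nonincreasing rearrangement of $(d_n)$. Ky Fan's maximum principle, applied to the compression of $Y$ to any $N$-dimensional coordinate subspace, yields $\sum_{k=1}^N s_k(X) \le \sum_{k=1}^N s_k(Y) = \sum_{k=1}^N s_k(C)$ for each $N$, giving $s(X) \submaj s(C)$. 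The total-sum equality is automatic since $\sum_n d_n = \trace Y = \sum_n s_n(C)$ in the extended sense (both equal $\trace C < \infty$ or both $=+\infty$), hence $s(X) \maj s(C)$.

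For the reverse inclusion, given $X \in \mathcal{D} \cap \K^+$ with $s(X) \maj s(C)$, the goal is to exhibit $Y \in \K^+$ with $s(Y) = s(C)$ and $E(Y) = X$; by \Cref{prop:orbit-closure-equivalences} any such $Y$ will lie in $\closure[\norm{\bigcdot}]{\ugroup(C)}$. Fixing the basis $\{e_n\}_{n=1}^\infty$ that diagonalizes $\mathcal{D}$ gives $X = \diag(d_n)$; set $\lambda := s(C)$. The approach will be to build $Y$ as an orthogonal block sum $Y = \bigoplus_k Y_k$ indexed by a partition $(I_k)$ of $\nats$ into finite intervals, where each $Y_k$ is a finite positive matrix on $V_k := \spans\{e_n : n \in I_k\}$. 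To each block I would assign a finite sublist $\lambda^{(k)}$ of $\lambda$ (padded with zeros if $|I_k|$ exceeds $|\lambda^{(k)}|$), arranged so $\bigsqcup_k \lambda^{(k)}$ exhausts $\lambda$. The finite-dimensional Schur--Horn theorem then supplies $Y_k$ positive with prescribed diagonal $(d_n)_{n \in I_k}$ and spectrum $\lambda^{(k)}$, provided the blockwise majorization $(d_n)_{n \in I_k} \maj \lambda^{(k)}$ holds. The partition would be constructed greedily, tracking the running majorization deficit $\Delta_N := \sum_{n \le N}\lambda_n - \sum_{n \le N} d^{*}_n$ (nonnegative by hypothesis), and closing off each block when this deficit vanishes, or approximately so in the non-trace-class case.

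The hard part will be twofold. First, the partition must be chosen with respect to the \emph{original} indices of $d_n$ rather than its rearrangement $d^{*}$, since the blockwise majorization concerns the actual diagonal entries in their natural order; this requires interleaving large and small diagonal entries with compatible eigenvalue sublists through a delicate matching argument. Second, one must reconcile $\dim \ker X$ with $\dim \ker C$: when these disagree, either extra zero diagonal entries must receive no nonzero eigenvalue, or extra null indices in $\lambda$ must be absorbed into blocks. This ``kernel slack'' is exactly what \Cref{prop:orbit-closure-equivalences} provides, and it is the reason the unitary orbit alone fails to characterize diagonals by majorization when the kernels of $X$ and $C$ differ. Compactness of the final $Y$ is then automatic since $\norm{Y_k} \le \max \lambda^{(k)} \to 0$ as $k \to \infty$, as $\lambda \in \cz$.
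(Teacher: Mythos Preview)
The paper does not prove this proposition: it is quoted verbatim as \cite[Proposition~6.4]{KW-2010-JFA} and used as a black box, so there is no in-paper argument to compare your proposal against.

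That said, your outline is broadly in the spirit of the Kaftal--Weiss proof. The forward inclusion via Ky~Fan and trace preservation is exactly right. For the reverse inclusion, the block-diagonal strategy invoking the finite-dimensional Schur--Horn theorem on each block is also the correct architecture, but you have correctly flagged (without resolving) the crux: one cannot simply wait for the running deficit $\Delta_N$ to hit zero, since in general it need not vanish for any $N$. The actual content of the Kaftal--Weiss argument is a combinatorial lemma showing that whenever $d \maj \lambda$ in $\czstar$, one can permute $d$ and $\lambda$ so that the resulting pair satisfies \emph{block majorization} (equality of partial sums along an infinite subsequence of indices), after which the finite Schur--Horn theorem applies blockwise. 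Your ``greedy'' description and the remark about interleaving large and small entries gesture at this, but the proof that such a rearrangement always exists is a nontrivial standalone lemma, not a byproduct of the construction. If you intend to give a self-contained proof rather than cite the result, that lemma is where the real work lies.
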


Since the set $\{ X \in \mathcal{D} \cap \K^+ \mid s(X) \maj s(C) \}$ is readily seen to be convex, \Cref{prop:compact-schur-horn} can be used to give a one-line proof that $\ocnr(A)$ is convex whenever $A$ is diagonalizable, thereby providing yet another proof of \Cref{thm:c-numerical-range-via-majorization} in this restricted setting.
Indeed, suppose $A \in \mathcal{D}$ and let $t \in [0,1]$ and suppose $X_1,X_2 \in \orbit(C)$.
Then there is some $X \in \orbit(C)$ for which $E(X) = E(t X_1 + (1-t) X_2)$, and therefore
\begin{equation*}
  \trace(E((t X_1 + (1-t)X_2)A)) = \trace(E(t X_1 + (1-t)X_2)A) = \trace(E(X)A) = \trace(E(XA)).
\end{equation*}
Since the conditional expectation is trace-preserving, $\trace((t X_1 + (1-t)X_2)A) = \trace(XA)$.

It turns out that there are certain circumstances under which $E(\ugroup(C))$ has been characterized, namely when $\ker C$ is either trivial \cite[Proposition~6.6]{KW-2010-JFA} or infinite dimensional \cite[Corollary~3.5]{LW-2015-JFA}.
In both cases, the characterization is still linked to majorization but the details of the definitions are a bit too technical for our present purposes.
Nevertheless, it is known that $E(\ugroup(C))$ is convex if $\ker C$ is trivial \cite[Corollary~6.7]{KW-2010-JFA} or infinite dimensional\footnote{In the case when $\ker C$ is nontrivial but finite dimensional, the first author has conjectured a characterization of $E(\ugroup(C))$ and has established that this conjectured set is convex. See \cite[Conjecture~3.6, Lemma~4.2]{LW-2015-JFA} for details.} \cite[Corollary~4.3]{LW-2015-JFA}.

\begin{proposition}[\protect{\cite[Corollary~6.7]{KW-2010-JFA},\cite[Corollary~4.3]{LW-2015-JFA}}]
  \label{prop:convex-schur-horn}
  Let $C$ be a positive compact operator.
  If $\ker C$ is either trivial or infinite dimensional, then $E(\ugroup(C))$ is convex.
\end{proposition}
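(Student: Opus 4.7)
The plan is to invoke the explicit characterizations of $E(\ugroup(C))$ established by Kaftal--Weiss in the trivial-kernel case \cite[Proposition~6.6]{KW-2010-JFA} and by Loreaux--Weiss in the infinite-dimensional-kernel case \cite[Corollary~3.5]{LW-2015-JFA}, and then verify directly that the described subset of $\mathcal{D}$ is closed under convex combinations. Since the result is already established in the cited literature, the real task here is simply to record that convexity drops out of the defining conditions of each characterization.

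In both cases the characterizations describe $E(\ugroup(C))$ as the collection of nonnegative diagonal sequences $d$ satisfying a trace identity $\sum_k d_k = \trace C$, a family of partial-sum majorization inequalities with respect to $s(C)$, and a structural condition tying the kernel of $\diag(d)$ to that of $C$. For a convex combination $d := t d_1 + (1-t) d_2$ with $t \in [0,1]$, the trace identity is preserved by linearity of the sum, and the majorization inequalities persist by the familiar sublinearity
\begin{equation*}
  \sum_{k=1}^n d^{*}_k \le t \sum_{k=1}^n (d_1)^{*}_k + (1-t) \sum_{k=1}^n (d_2)^{*}_k
\end{equation*}
of partial sums of nonincreasing rearrangements, applied uniformly in $n$.

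The main obstacle is preservation of the kernel condition. When $\ker C = \set{0}$ this is transparent, since a convex combination of strictly positive sequences is again strictly positive, and convexity then follows immediately \cite[Corollary~6.7]{KW-2010-JFA}. When $\dim \ker C = \infty$ more care is needed: one must confirm that the zero entries of $d$ can be organized to span an infinite-dimensional subspace, which in general forces $d_1$ and $d_2$ to share an infinite collection of common zero indices (or, more flexibly, to be reindexed against a common basis before taking the convex combination). The Loreaux--Weiss characterization is tailored precisely so that this arrangement can always be achieved for any $d_1, d_2 \in E(\ugroup(C))$, and it is here that the substantive work of \cite{LW-2015-JFA} enters; once this technical point is settled, the convexity conclusion \cite[Corollary~4.3]{LW-2015-JFA} follows by the same sublinearity-of-partial-sums argument as in the trivial-kernel case.
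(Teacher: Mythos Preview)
The paper provides no proof of this proposition at all: it is stated purely as a citation of \cite[Corollary~6.7]{KW-2010-JFA} and \cite[Corollary~4.3]{LW-2015-JFA}, and the authors immediately proceed to use it. So there is nothing to compare your argument against except the external literature itself.

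Your sketch is in the right spirit---invoke the explicit characterizations and check the defining conditions are preserved under convex combinations---and this is indeed how the cited corollaries are proved. However, your description of the infinite-kernel case is inaccurate. The Loreaux--Weiss characterization does \emph{not} require diagonals in $E(\ugroup(C))$ to have infinitely many zero entries; rather, when $\dim\ker C = \infty$, a nonnegative sequence $d$ with $d^* \prec s(C)$ lies in $E(\ugroup(C))$ precisely when either $d$ has infinitely many zeros \emph{or} $d^*$ fails to be block majorized by $s(C)$ in a certain technical sense. The convexity argument in \cite[Corollary~4.3]{LW-2015-JFA} therefore does not hinge on ``organizing common zero indices'' as you suggest, but on showing that a convex combination cannot accidentally acquire block majorization when neither summand has it. Your framing of the obstacle is misleading, though your ultimate deferral to the cited corollary is appropriate.

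Since the paper treats this as a black-box citation, the cleanest thing to do is the same: state it, cite it, and move on.
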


This immediately yields the following corollary concerning the convexity of $\cnr(A)$.

\begin{corollary}
  \label{cor:c-numerical-range-convex-diagonalizable}
  Let $C$ be a positive trace-class operator and suppose that $\ker C$ is either trivial or infinite dimensional.
  For any diagonalizable operator $A$, $\cnr(A)$ is convex.
\end{corollary}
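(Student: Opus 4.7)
The plan is to mirror, almost verbatim, the one-line argument sketched immediately after \Cref{prop:compact-schur-horn}, but to replace the invocation of \Cref{prop:compact-schur-horn} with the stronger \Cref{prop:convex-schur-horn}, which lets us stay inside the actual unitary orbit $\ugroup(C)$ rather than its trace-norm closure $\orbit(C)$. The entire proof will essentially reduce $\cnr(A)$ to the image of a diagonal orbit under a linear functional, so convexity of the former follows from convexity of the latter, which is exactly what \Cref{prop:convex-schur-horn} provides under the stated kernel hypothesis.

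First, since $A$ is diagonalizable, fix an orthonormal basis $\set{e_n}$ of eigenvectors for $A$, let $\mathcal{D}$ denote the associated diagonal masa, and let $E\colon B(\Hil) \to \mathcal{D}$ be the canonical trace-preserving conditional expectation. Because $A \in \mathcal{D}$, one has $E(XA) = E(X)A$ for every $X \in \traceclass$, and combining this with the trace-preserving property $\trace(XA) = \trace(E(XA))$ yields
\begin{equation*}
  \trace(XA) = \trace(E(X)A) \qquad \text{for all } X \in \traceclass.
\end{equation*}
Consequently,
\begin{equation*}
  \cnr(A) = \set{ \trace(XA) \mid X \in \ugroup(C) } = \set{ \trace(YA) \mid Y \in E(\ugroup(C)) },
\end{equation*}
so $\cnr(A)$ is the image of $E(\ugroup(C))$ under the linear functional $Y \mapsto \trace(YA)$ on $\mathcal{D} \cap \traceclass$.

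Finally, under the hypothesis that $\ker C$ is either trivial or infinite dimensional, \Cref{prop:convex-schur-horn} asserts that $E(\ugroup(C))$ is convex. Since the image of a convex set under a linear map is convex, it follows that $\cnr(A)$ is convex, completing the proof. The main obstacle has already been handled externally: all of the difficulty is packaged into \Cref{prop:convex-schur-horn}, which rests on the Schur--Horn-type characterizations of \cite{KW-2010-JFA} and \cite{LW-2015-JFA}, and the only thing to verify on our end is the elementary but essential observation that diagonalizability of $A$ lets $\trace(X \bigcdot A)$ factor through $E$.
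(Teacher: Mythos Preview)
Your proof is correct and follows essentially the same approach as the paper: reduce to $A \in \mathcal{D}$, use the identity $\trace(XA) = \trace(E(X)A)$, and invoke \Cref{prop:convex-schur-horn} to conclude that $\cnr(A)$ is the image of the convex set $E(\ugroup(C))$ under a linear functional. The only cosmetic difference is that the paper phrases the final step as an element-level convex combination computation, whereas you state it as ``linear image of a convex set is convex,'' but these are the same argument.
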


\begin{proof}
  Suppose $A$ is diagonalizable.
  Then after conjugating by a suitable unitary, which we can absorb into $\ugroup(C)$, we may assume $A \in \mathcal{D}$.
  Let $t \in [0,1]$ and suppose $X_1,X_2 \in \ugroup(C)$.
  Then by \Cref{prop:convex-schur-horn} there is some $X \in \ugroup(C)$ for which $E(X) = E(t X_1 + (1-t) X_2)$, and therefore
  \begin{equation*}
    \trace(E((t X_1 + (1-t)X_2)A)) = \trace(E(t X_1 + (1-t)X_2)A) = \trace(E(X)A) = \trace(E(XA)).
  \end{equation*}
  Then $\trace((t X_1 + (1-t)X_2)A) = \trace(XA) \in \cnr(A)$ since the conditional expectation is trace-preserving.
\end{proof}

\bibliographystyle{tfnlm}
\bibliography{references.bib}

\end{document}